\newcommand{\Lie}{{\mathcal{L}}}
\newcommand{\der}{\nabla}
\newcommand{\les}{\lesssim}
\newcommand{\bea}{\begin{eqnarray}}
\newcommand{\eea}{\end{eqnarray}}
\newcommand{\derm}{ { \der^{(\bf{m})}} }
\newcommand{\eps}{{\varepsilon}}\newcommand{\R}{{\mathbb R}}
\newcommand{\E}{{\cal E}}
\newcommand{\la}{\langle}\newcommand{\si}{\sigma}\renewcommand{\b}{\beta}
\newcommand{\cal}{\mathcal}
\def\a{\alpha}\def\ga{\gamma}\def\de{\delta}\def\Si{\Sigma}
\def\bm{\left( \begin{array}{cc}}
\def\endm{\end{array}\right)}\newcommand{\eq}{\end{equation}}
\def\a{\alpha}\def\b{\beta}
\def\ga{\gamma}\def\de{\delta}\def\pa{\partial}
\def \rectangle#1#2{\hbox{\vrule\vbox to #2 {\hrule\hbox to #1{\hfil}\vfil\hrule}\vrule}}
\def\a{\alpha}\def\b{\beta}\def\ga{\gamma}
\def\de{\delta}\def\pa{\partial}
\def\pa{\partial}
\def\beaa{\begin{eqnarray*}}
\def\eeaa{\end{eqnarray*}}
\def\pa{\partial}
\def\a{{\alpha}}
\def\b{{\beta}}
\def\ga{\gamma}
\def\de{\delta}
\def\eps{\epsilon}
\def\la{\lambda}
\def\si{\sigma}
\def\Si{\Sigma}
\def\Om{\Omega}
\def\g{{\bf g}}
\def\SSS{{\Bbb S}}
\def\R{{\mathbb R}}
\def\12{\frac{1}{2}}
\def\bep{\begin{proposition}}
\def\eep{\end{proposition}}
\def\4{\frac{1}{4}}
\def\12{\frac{1}{2}}
\def\bep{\begin{proposition}}
\def\eep{\end{proposition}}
\def\bm#1{\boldsymbol{#1}} 
\def\build#1_#2^#3{\mathrel{\mathop{\kern 0pt#1}\limits_{#2}^{#3}}}
\def\4{\frac{1}{4}}
\def\<{\langle}
\def\>{\rangle}
\theoremstyle{plain}
\newtheorem{theorem}{Theorem}
\newtheorem{proposition}{Proposition}
\newtheorem{lemma}{Lemma}
\newtheorem{corollary}{Corollary}
\theoremstyle{remark}
\newtheorem{remark}{Remark}
\theoremstyle{definition}
\newtheorem{definition}{Definition}
\numberwithin{equation}{section}
\numberwithin{proposition}{section}
\numberwithin{definition}{section}
\numberwithin{lemma}{section}
\numberwithin{corollary}{section}
\numberwithin{remark}{section}
\begin{document}
\include{psfig}
\title[Energy Estimates for Einstein-Yang-Mills]{Energy estimates for the Einstein-Yang-Mills fields and applications}
\author{Sari Ghanem}
\address{University of Lübeck}
\email{sari.ghanem@uni-luebeck.de}
\maketitle

\begin{abstract}
We prove exterior energy estimates for tensorial non-linear wave equations, where the background metric is a perturbation of the Minkowski space-time, and where the derivatives are the Minkowski covariant derivatives. We obtain bounds in the exterior region of the Minkowski space-time, for the weighted $L^2$ norm on each component, separately, of the covariant derivative of the tensorial solutions, and we also control a space-time integral in the exterior of the covariant tangential derivatives of the solutions. As a special application, we use here these energy estimates to prove the exterior stability of the Minkowski space-time, $\R^{1+4}$\,, as solution to the coupled Einstein-Yang-Mills system associated to any compact Lie group $G$\,, in the Lorenz gauge and in wave coordinates. The bounds in the exterior for the $L^2$ norm on the covariant derivatives of each component, separately, of the tensor solution, as well as the bound on the space-time integral of the covariant tangential derivatives, are motivated by a problem that we will address in a paper that follows to prove the exterior stability of the $(1+3)$-Minkowski space-time for perturbations governed by the Einstein-Yang-Mills equations.

\end{abstract}

\setcounter{page}{1}
\pagenumbering{arabic}


\section{Introduction}\label{Introduction}

This is the second paper in a series of three, where we study the non-linear stability of the Minkowski space-time solution to the coupled Einstein-Yang-Mills equations. In this paper, we prove exterior energy estimates and apply them, as a special case, to prove the exterior stability of the $(1+4)$-Minkowski space-time solution to the Einstein-Yang-Mills system. However, our energy estimates are mostly motivated by the next paper to prove the non-linear exterior stability of the $(1+3)$-Minkowski space-time.

First, we prove exterior energy estimates for a system of coupled non-linear covariant wave equations. More precisely, we consider that we are given a fixed system of coordinates, namely $(t, x^1, \ldots, x^n)$\;, that is not necessarily wave coordinates, yet, for our application to the proof of stability of Minkowski, this system will ultimately be chosen to be that of wave coordinates. In this fixed system of coordinates, we define $m$ to be the Minkowski metric $(-1, +1, \ldots, +1)$ and we define $\derm$ to be the covariant derivative associated to the metric $m$ (see Definition \eqref{definitionofMinkowskiandcovariantderivtaiveofMinkwoforafixedgivensystemofcoordinates}). We consider any arbitrary curved space-time $(\cal M, \g) $\;, with a smooth Lorentzian metric $\g$\;, which will ultimately be, in our application to the non-linear exterior stability problem, our unknown Lorentzian manifold solution to the fully coupled Einstein-Yang-Mills system.

We study the following system of non-linear covariant tensorial wave equations for $\Phi$ on $(\cal M, \g) $\;, where the initial data for the hyperbolic Cauchy problem is given on an initial Cauchy hypersurface $\Sigma$\;, and where $V$ is any vector,
\bea\label{nonlinearsystemoftensorialwaveequations}
g^{\a\b} \derm_{\a } \derm_\b \Phi_{V} = S_{V} \; .
\eea
Here the metric $\g$ is a perturbation of Minkowski in the following sense: if we define (see Definition \ref{definitionofbigHandsmallhandrecallofdefinitionofMinkowskimetricminrelationtowavecoordiantesasreminder}),
\bea
H^{\mu\nu} &:=& g^{\mu\nu}-m^{\mu\nu} \;,
\eea
where $m^{\mu\nu}$ is the inverse of the Minkowski metric $m_{\mu\nu}$\;, that is defined to be $(-1, +1, \ldots, +1)$ in our chosen system of coordinates $(x^0, x^1, \ldots, x^n)$, where here $x^0 = t$\;, then we assume in our energy estimates that
\bea\label{boundednessoftheperturbationbigHbyaconstant}
 \sum_{\mu, \nu = 0}^{n}   | H_{\mu\nu} | < \frac{1}{n}\; .
\eea
This condition on the perturbation $H$ would make the boundary terms of our energy estimates “look like” the $L^2$ norm of the covariant gradient $ \derm \Phi_{V}  $ of the solution of our system \eqref{nonlinearsystemoftensorialwaveequations} (see Lemma \ref{howtogetthedesirednormintheexpressionofenergyestimate}).

The goal of our energy estimates is to prove an exterior energy estimate that would allow us to control the $L^2$ norm of $\derm \Phi_{V}$ where the integral would be taken on a hypersurface that is the intersection of $t = constant$\;,  in our fixed system of coordinates $(t, x^1, \ldots, x^n)$\;, with the complement of the future domain of dependance for the metric $m$ of a compact $\cal K \subset \Sigma$\;. This is what we mean by exterior energy estimate: they are bounds on $L^2$ norm in domains that are exterior to the future domain of dependance of $\cal K \subset \Sigma$\;.

We consider a specific non-symmetric tensor (see Definition \ref{defofthestreessenergymomentumtensorforwaveequationhere}) which we contract with a weighted vector (see \eqref{The weightedvectorproportionaltodt}) to get a weighted conservation law (see Lemma \ref{weightedconservationlawintheexteriorwiththeenergymomuntumtensorcontarctedwithvectordt}). Here the weightes are defined in Definitions \ref{defoftheweightw}, \ref{defwidehatw} and \ref{defwidetildew}. Thus, we get a weighted conservation law (see Corollary \ref{WeightedenergyestimateusingHandnosmallnessyet}) that has also a quantity that controls the tangential derivatives of our unknown solution $\Phi_{V}$ (see Lemma \ref{expressionofTttplusTtr}), thanks to the non-vanishing weight $\widehat{w}^\prime$ (see Lemma \ref{equaivalenceoftildewandtildeandofderivativeoftildwandderivativeofhatw}). 

The fact the metric $g$ is assumed to be close enough to the Minkowski metric (see \eqref{boundednessoftheperturbationbigHbyaconstant}), allows us to translate these conservation laws into \textit{weighted} energy estimates as in Corollary \ref{TheenerhyestimatewithtermsinvolvingHandderivativeofHandwithwandhatw}. Thanks to the non-trivial weight $\widehat{w}$ (see Definition \ref{defwidehatw}), we have control on the weighted space-time integral of the covariant tangential derivatives of $\Phi_V$\;, namely
\bea
\notag
 \int_{t_1}^{t_2}  \int_{\Sigma^{ext}_{\tau} }  \Big(    \frac{1}{2} \Big(  | \derm_t  \Phi_{V} + \derm_r \Phi_{V} |^2  +  \de^{ij}  | ( \derm_i - \frac{x_i}{r} \derm_{r}  )\Phi_{V} |^2 \Big)  \cdot d\tau \cdot \widehat{w}^\prime (q) d^{n}x \;,
\eea
 a control that will be used in the next paper for the non-linear stability of the $(1+3)$-Minkowski space-time governed by the coupled Einstein-Yang-Mills system. Also, thanks to the definition of our tensor $T$ in Definition \ref{defofthestreessenergymomentumtensorforwaveequationhere}, we get control on the $L^2$ norm on each component, namely,
  \bea
  \notag
     \int_{\Sigma^{ext}_{t} }  |\derm \Phi_{V} |^2     \cdot w(q)  \cdot d^{n}x   \; .
  \eea
 The fact that we can separate the controls for the $L^2$ norm on each component $\derm \Phi_{V} $ is necessary for our next paper that treats the case of $n=3$.
 
 We showed in \cite{G4} that the stability of the Minkowski space-time in the Lorenz gauge and in wave coordinates, solution to the Einstein-Yang-Mills equations, could be recasted as the study of non-linear wave equations, in the form of \eqref{nonlinearsystemoftensorialwaveequations}, on both the Yang-Mills potential $A$ and the perturbation metric $h$ (see Definition \ref{definitionofbigHandsmallhandrecallofdefinitionofMinkowskimetricminrelationtowavecoordiantesasreminder}) -- we refer the reader to the introduction of our previous paper \cite{G4}. 
 
 In fact, the Einstein-Yang-Mills equations are
\bea\label{EYM}
R_{ \mu \nu} - \frac{1}{2} g_{\mu\nu} R &=& 8\pi \cdot T_{\mu\nu} \; , 
\eea
where
\bea\label{definitionofstressenergymomenturmYangMillsmatter}
 T_{\mu\nu} =  \frac{1}{4 \pi} \cdot  ( < F_{\mu\b}, F_{\nu}^{\; \; \b}> - \frac{1}{4} g_{\mu\nu } < F_{\a\b },F^{\a\b } > ) \; ,
 \eea
and where $F$ is the Yang-Mills curvature, that is a two-form defined by
\bea\label{expF}
F_{\a\b} = \der_{\a}A_{\b} - \der_{\b}A_{\a} + [A_{\a},A_{\b}] \; ,
\eea
where $A$ is valued in the Lie algebra $\cal G$ associated to any compact Lie group $G$\;, and where $\der$ is the Levi-Civita covariant derivative associated to the unknown metric $\g$\;.

However, we showed in \cite{G4}, that the Einstein-Yang-Mills equations in the Lorenz gauge and in wave coordinates, could be written as follows, 

   \bea
  \notag
 && g^{\la\mu} \derm_{\la}   \derm_{\mu}   A_{\si}     \\
   \notag
 &=&    ( \derm_{\si}  h^{\a\mu} )  \cdot (  \derm_{\a}A_{\mu} )  \\
    \notag
 &&   +   \frac{1}{2}    \big(   \derm^{\mu} h^{\nu}_{\, \, \, \si} +  \derm_\si h^{\nu\mu} -   \derm^{\nu} h^{\mu}_{\,\,\, \si}  \big)   \cdot  \big( \derm_{\mu}A_{\nu} -  \derm_{\nu}A_{\mu}  \big) \\
 \notag
&&     +   \frac{1}{2}    \big(   \derm^{\mu} h^{\nu}_{\, \, \, \si} +  \derm_\si h^{\nu\mu} -   \derm^{\nu} h^{\mu}_{\,\,\, \si}  \big)   \cdot   [A_{\mu},A_{\nu}] \\
 \notag
 && -  \big(  [ A_{\mu}, \derm^{\mu} A_{\si} ]  +    [A^{\mu},  \derm_{\mu}  A_{\si} - \derm_{\si} A_{\mu} ]    +    [A^{\mu}, [A_{\mu},A_{\si}] ]  \big)  \\
 \notag
  && + O( h \cdot  \derm  h \cdot  \derm A) + O( h \cdot  \derm h \cdot  A^2) + O( h \cdot  A \cdot \derm A) + O( h \cdot  A^3) \; ,
  \eea
 and
  \bea
\notag
 && g^{\alpha\beta}\derm_\alpha \derm_\beta h_{\mu\nu} \\
 \notag
  &=& P(\derm_\mu h,\derm_\nu h)  +  Q_{\mu\nu}(\derm h,\derm h)   + G_{\mu\nu}(h)(\derm h,\derm h)  \\
\notag
 &&   -4     <   \derm_{\mu}A_{\b} - \derm_{\b}A_{\mu}  ,  \derm_{\nu}A^{\b} - \derm^{\b}A_{\nu}  >    \\
 \notag
 &&   + m_{\mu\nu }       \cdot  <  \derm_{\a}A_{\b} - \derm_{\b}A_{\a} , \derm_{\a} A^{\b} - \derm^{\b}A^{\a} >   \\
 \notag
&&           -4  \cdot  \big( <   \derm_{\mu}A_{\b} - \derm_{\b}A_{\mu}  ,  [A_{\nu},A^{\b}] >   + <   [A_{\mu},A_{\b}] ,  \derm_{\nu}A^{\b} - \derm^{\b}A_{\nu}  > \big)  \\
\notag
&& + m_{\mu\nu }    \cdot \big(  <  \derm_{\a}A_{\b} - \derm_{\b}A_{\a} , [A^{\a},A^{\b}] >    +  <  [A_{\a},A_{\b}] , \derm^{\a}A^{\b} - \derm^{\b}A^{\a}  > \big) \\
\notag
 &&  -4     <   [A_{\mu},A_{\b}] ,  [A_{\nu},A^{\b}] >      + m_{\mu\nu }   \cdot   <  [A_{\a},A_{\b}] , [A^{\a},A^{\b}] >  \\
 \notag
     && + O \big(h \cdot  (\derm A)^2 \big)   + O \big(  h  \cdot  A^2 \cdot \derm A \big)     + O \big(  h   \cdot  A^4 \big)  \;  , 
\eea
where $P$\;, $Q$ and $G$, as well as the big $O$ notation, are defined in \cite{G4}.

 Hence, if we look at the Lie derivatives of the source terms which appear for the Yang-Mills potential, where here $Z^J$ is any product of length $|J|$ of Minkowski vector fields, as explained in \cite{G4}, we have the following bound,
 
    \bea\label{liederivativeindirectionofMinkowskiforsourcetermsforA}
   \notag
 && |  \Lie_{Z^I}   ( g^{\la\mu} \derm_{\la}   \derm_{\mu}   A  )   | \\
    \notag
   &\leq& \sum_{|J| +|K|+|L|+ |M| \leq |I|} \big( \; O(  |   \derm (\Lie_{Z^J}  h) | \cdot |    \derm (\Lie_{Z^K} A ) |  )  \\
      \notag
   &&     +  O(   |   \derm ( \Lie_{Z^J} h) | \cdot |  \Lie_{Z^K}   A  | \cdot |    \Lie_{Z^L}  A |   )    \\
   \notag
   &&   +   O( |\Lie_{Z^J} A   | \cdot |    \derm ( \Lie_{Z^K}  A) |  )    + O(   |    \Lie_{Z^J} A | \cdot   |  \Lie_{Z^K} A  | \cdot  |   \Lie_{Z^L} A | )    \\
           \notag
      &&         + O( | \Lie_{Z^J}   h | \cdot | \derm ( \Lie_{Z^K}  h) | \cdot | \derm (\Lie_{Z^L}  A) | )  \\
       \notag
      && +  O( | \Lie_{Z^J}  h | \cdot |   \derm (\Lie_{Z^K}  h ) | \cdot  | \Lie_{Z^L}  A | \cdot \Lie_{Z^M} A |  \\
       \notag
      && + O( | \Lie_{Z^J} h  | \cdot |  \Lie_{Z^K} A | \cdot |  \derm (\Lie_{Z^L} A) | )   + O( | \Lie_{Z^J} h | \cdot | \Lie_{Z^K} A | \cdot | \Lie_{Z^L} A | \cdot | \Lie_{Z^M} A | ) \; \big) \; . \\
  \eea

Unlike the case of the Einstein vacuum equations, already in 4 space-dimensions, i.e. for $n=4$\;, we see that the non-linear structure of the Einstein-Yang-Mills potential $A$, exhibits terms such that $| A   | \cdot |    \derm  A |$ and  $|  A |^3$\;, which are troublesome in the interior region, inside the outgoing light cone prescribed by $q:= r-t < 0$\;, where, $t$ and $r:= |x|$ are the time and space wave coordinates of the Einstein-Yang-Mills equations in the Lorenz gauge. Here, we chose to fix our system of coordinates as being that of wave coordinates and then we defined the Minkowski space-time to be the Minkowski metric in this system of wave coordinates: thus, these are the time and space coordinates of our Minkowski space-time.

These “troublesome” terms, namely $| A   | \cdot |    \derm  A |$ and $|  A |^3$\;, exhibit factors for $|A|$ which are not integrable in the interior. Indeed, since we are working with wave equations and therefore the energy is at the level of $\derm A$ (see \eqref{definitionoftheweightedenergy}), the Grönwall inequality that we would like to establish is a one that is at the level of the norm of the gradient of the Lie derivatives of the fields. Hence, trying to use a Hardy type inequality (as the one given in Corollary \ref{HardytypeinequalityforintegralstartingatROm}), in order to transform $A$ into $\derm A$\;, we encounter a problem in the interior region that is already there for $n=4$\;.

As mentioned earlier, the energy that we defined involves only the gradient of the fields, and not the field itself (see \eqref{definitionoftheweightedenergy}). One needs to convert a control on the zero-derivatives in the source terms into a control on the gradient of the fields using a Hardy type inequality. Yet, a Hardy type inequality says that an $L^2$ norm of the field with a weight of $\frac{1}{(1+|q|)^2}$ could be converted into an $L^2$ norm of the gradient of the field, which is the one that interests us. However, in the case of $n\geq 4$\;, in order to close the argument to bound the higher order energy, one needs to control $(1+t )^{1+\la} \cdot |  \Lie_{Z^I}   ( g^{\la\mu} \derm_{\la}   \derm_{\mu}   A  )   |^2$\;, with $\la > 0$\;, where $\Lie_{Z^I}$ are the Minkowski Lie derivatives, in order to get a uniform bound on the energy. This implies that a factor of the field $A$ should be $\frac{1}{(1+t)^{1+\la}\cdot (1+|q|)}$\;. This way, we would have
\bea
\notag
(1+t )^{1+\la} \cdot \Big( \frac{1}{(1+t)^{1+\la}\cdot (1+|q|)} \cdot |A|  \Big)^2 &=& \frac{1}{(1+t)^{1+\la}\cdot (1+|q|)^2} \cdot |A|^2 \; . \\
\eea
Then, by using Hardy inequality, as explained, we could then obtain a control on the integral of the above quantity by an integral of this quantity $ \frac{1}{(1+t)^{1+\la}} \cdot |\derm A|^2$\;, which when appears in a Grönwall type inequality, allows one to conclude that the energy will indeed remain bounded. However, looking at the source terms which appear in the wave equation on $A$ (see \eqref{liederivativeindirectionofMinkowskiforsourcetermsforA}), we see that in trying to control the non-linear structure to close a Grönwall type inequality on the energy, we are confronted to terms either of this type
\bea
         \notag
 && \sum_{|J| +|K|\leq |I|}   |\Lie_{Z^J} A   | \cdot |    \derm ( \Lie_{Z^K}  A) | \\
 \notag
  &\les& \Big( E (   \lfloor \frac{|I|}{2} \rfloor + \lfloor  \frac{n}{2} \rfloor  + 1)  \cdot \begin{cases}  \frac{\eps }{(1+t+|q|)^{\frac{(n-1)}{2}-\delta} (1+|q|)^{1+\gamma}},\quad\text{when }\quad q>0\;,\\
           \notag
      \frac{\eps  }{(1+t+|q|)^{\frac{(n-1)}{2}-\delta}(1+|q|)^{\frac{1}{2} }}  \,\quad\text{when }\quad q<0 \;, \end{cases} \Big) \\
         \notag
      && \cdot \sum_{|K|\leq |I|}   |  \Lie_{Z^K} A | \\
        \notag
      && +  \Big( E (   \lfloor \frac{|I|}{2} \rfloor + \lfloor  \frac{n}{2} \rfloor  + 1)  \cdot \begin{cases}  \frac{\eps }{(1+t+|q|)^{\frac{(n-1)}{2}-\delta} (1+|q|)^{\gamma}},\quad\text{when }\quad q>0\; ,\\
           \notag
      \frac{\eps \cdot (1+|q|)^{\frac{1}{2} }  }{(1+t+|q|)^{\frac{(n-1)}{2}-\delta}}  \,\quad\text{when }\quad q<0 \; , \end{cases} \Big) \\
         \notag
      && \cdot \sum_{|K|\leq |I|}   | \derm ( \Lie_{Z^K} A ) | \; ,  \\
  \eea
or of this type
\bea  
         \notag
 && \sum_{|J| +|K| + |L| \leq |I|}  |    \Lie_{Z^J} A | \cdot   |  \Lie_{Z^K} A  | \cdot  |   \Lie_{Z^L} A |  \\
          \notag
       &\les& \Big( E (   \lfloor \frac{|I|}{2} \rfloor + \lfloor  \frac{n}{2} \rfloor  + 1)  \cdot \begin{cases}  \frac{\eps }{(1+t+|q|)^{(n-1)-2\delta} (1+|q|)^{2\gamma}},\quad\text{when }\quad q>0\; ,\\
           \notag
      \frac{\eps \cdot (1+ |q|)  }{(1+t+|q|)^{(n-1)-2\delta}}  \,\quad\text{when }\quad q<0 \; , \end{cases} \Big)  \\
               \notag
      && \cdot \sum_{|K|\leq |I|}   |  \Lie_{Z^K} A  | \; . \\
  \eea
The decaying factors here are the ones which arise from a weighted version of the Klainerman-Sobolev inequality, and $E (   \lfloor \frac{|I|}{2} \rfloor + \lfloor  \frac{n}{2} \rfloor  + 1)$ is the constant that bounds the higher order energy norm used in our bootstrap argument for $\lfloor \frac{|I|}{2} \rfloor + \lfloor  \frac{n}{2} \rfloor  + 1$ derivatives of the fields, and for now $\de = 0$ since we are assuming that the energy is uniformly bounded. However, as we can see, both of these terms do not have the right factor in the interior region for $n=4$\;, as they enter as $\frac{1}{(1+t) \cdot (1+|q|)}$ factors for $|A|$ instead of $\frac{1}{(1+t)^{1+\la}\cdot (1+|q|)}$, $\la > 0$\;.
 
 One could think about fixing the problem by allowing a polynomial growth of the energy by a rate of $(1+t)^\de$\;, if $\de > 0$\;, and therefore, one could relax the need to have the factor in the Grönwall lemma to be integrable but to become instead $\frac{1}{(1+t)}$\;. Would this fix the problem? As discussed above, in order to effectively use a Hardy type inequality, one would then need a factor of $\frac{1}{(1+t) \cdot (1+|q|)}$ in front of the field $A$. However, still under such a relaxed bootstrap assumption, the factors which appear in front of $A$ generated from the terms $| A   | \cdot |    \derm  A |$ and  $|  A |^3$ are not good enough in the interior region, as they enter as  $\frac{1}{(1+t)^{1-\de}\cdot (1+|q|)}$ factor for $|A|$\;.  
 
 Naively, one could try to counter the problem in the interior by putting a weight in the interior, say of $(1+|q|)^\b$\;, with $\b >0$ for $q <0$\;, in the weighted higher order energy so as to obtain a better control in the interior region when using the Klainerman-Sobolev inequality, as we did for the exterior region. While this seems an interesting idea, in reality doing so, would change the sign of the derivative of the weight at $q=0$\;, since the weight grows as, say  $(1+|q|)^\a$\;, with $\a >0$ for $q \geq 0$\;, and as  $(1+|q|)^\b$\;, with $\b >0$ for $q <0$\;, which means that the derivative of the weight is negative in the interior region. This implies that a space-time integral enters with the wrong sign (the negative sign generating from the growing weight in the interior) in the energy estimate that we would like to use in order to establish the Grönwall inequality (see Lemma \ref{TheenerhyestimatewithtermsinvolvingHandderivativeofH} and Corollary \ref{TheenerhyestimatewithtermsinvolvingHandderivativeofHandwithwandhatw}). Therefore, we can only introduce a weight in the exterior region, and as much weight we want, which would allow us to gain more decay in $|q|$\;, yet only in the exterior region.
 
 We could then proceed in the exterior to obtain a suitable Grönwall type ineuqlaity on the energy (see Lemma \ref{Gronwallinequalityintheexteriorontheenergyfornequal4}) that would allow us to close the bootstrap argument that we started, as we do in Proposition \ref{Thepropositiononclosingthebootstrapargumenttoactuallyboundtheenergy}. We will then prove the following theorem.

\subsection{The statements}\

We will prove the energy estimate given in Corollary \ref{TheenerhyestimatewithtermsinvolvingHandderivativeofHandwithwandhatw} and as a special application for $n=4$\;, we will use this energy estimate to prove Proposition \ref{Thepropositiononclosingthebootstrapargumenttoactuallyboundtheenergy}. Thus, based on the set-up detailed in our previous paper \cite{G4}, we would have by then proved the following theorem.

\begin{theorem}
Let $n\geq 4$\;. Assume that we are given an initial data set $(\Sigma, \overline{A}, \overline{E}, \overline{g}, \overline{k})$ for \eqref{EYM}. We assume that $\Sigma$ is diffeomorphic to $\R^n$\;. Then, there exists a global system of coordinates $(x^1, ..., x^n) \in \R^n$ for $\Sigma$\;. We define
\bea
r := \sqrt{ (x^1)^2 + ...+(x^n)^2  }\;.
\eea
Furthermore, we assume that the data $(\overline{A}, \overline{E}, \overline{g}, \overline{k}) $ is smooth and asymptotically flat.

Let $\de_{ij}$ be the Kronecker symbol, and let $\overline{h}_{ij} $ be defined in this system of coordinates $x^i$\;, by
 \bea
\overline{h}_{ij} := \overline{g}_{ij} - \de_{ij} \; .
\eea

We then define the weighted $L^2$ norm on $\Sigma$\;, namely $\overline{\E}_N$\;, for $\ga > 0$\;, by
 \bea\label{definitionoftheenergynormforinitialdata}
 \notag
&& \overline{\E}_N \\
 \notag
&:=&  \sum_{|I|\leq N} \big(   \| (1+r)^{1/2 + \ga + |I|}   \overline{D} (  \overline{D}^I  \overline{A}    )  \|_{L^2 (\Sigma)} +  \|(1+r)^{1/2 + \ga + |I|}    \overline{D}  ( \overline{D}^I \overline{h}   )  \|_{L^2 (\Sigma)} \big) \\
\notag
&:=&  \sum_{|I|\leq N}      \big(   \sum_{i=1}^{n}  \|(1+r)^{1/2 + \ga + |I|}     \overline{D} (  \overline{D}^I  \overline{A_i}    )  \|_{L^2 (\Sigma)} +  \sum_{i, j =1}^{n}  \|(1+r)^{1/2 + \ga + |I|}     \overline{D}  ( \overline{D}^I \overline{h}_{ij}   )  \|_{L^2 (\Sigma)} \big) \; , \\
\eea
where the integration is taken on $\Sigma$ with respect to the Lebesgue measure $dx_1 \ldots dx_n$\;, and where $\overline{D} $ is the Levi-Civita covariant derivative associated to the given Riemannian metric $\overline{g}$\;.

We also assume that the initial data set $(\Sigma, \overline{A}, \overline{E}, \overline{g}, \overline{k})$ satisfies the Einstein-Yang-Mills constraint equations, namely
\bea
\notag
  \mathcal{R}+ \overline{k}^i_{\,\, \, i} \overline{k}_{j}^{\,\,\,j}  -  \overline{k}^{ij} \overline{k}_{ij}   &=&    \frac{4}{(n-1)}   < \overline{E}_{i}, \overline{E}^{ i}>   \\
 \notag
 && +  < \overline{D}_{i}  \overline{A}_{j} - \overline{D}_{j} \overline{A}_{i} + [ \overline{A}_{i},  \overline{A}_{j}] ,\overline{D}^{i}  \overline{A}^{j} - \overline{D}^{j} \overline{A}^{i} + [ \overline{A}^{i},  \overline{A}^{j}] >  \;  ,\\
 \notag
\overline{D}_{i} \overline{k}^i_{\,\,\, j}    - \overline{D}_{j} \overline{k}^i_{\, \,\,i}  &=&  2 < \overline{E}_{i}, \overline{D}_{j}  \overline{A}^{i} - \overline{D}^{i} \overline{A}_{j} + [ \overline{A}_{j},  \overline{A}^{i}]  >  \;  ,\\
\notag
\overline{D}^i \overline{E}_{ i} + [\overline{A}^i, \overline{E}_{ i} ]  &=& 0  \;  . \\
\eea

For any $n \geq 4$\;, and for any $N \geq 2 \lfloor  \frac{n}{2} \rfloor  + 2$\;, there exists a constant $ \overline{c} (N, \ga)$ depending on $N$ and on $\ga$\;, such that if 

\bea\label{Assumptiononinitialdataforglobalexistenceanddecay}
\overline{\E}_N \leq   \overline{c} ( N, \ga) \; ,
\eea
then there exists a solution $(\cal{M}, A, g)$ to the Cauchy problem for the fully coupled Einstein-Yang-Mills equations \eqref{EYM} in the future of the whole causal complement of any compact $\cal{K} \subset \Sigma$\;, converging to the null Yang-Mills potential and to the Minkowski space-time in the following sense: if we define the metric $ m_{\mu\nu}$ to be the Minkowski metric in wave coordinates $(x^0, x^1, \ldots, x^n)$\, and define $t = x^0$\;, and if we define in this system of wave coordinates 
\bea
h_{\mu\nu} := g_{\mu\nu} - m_{\mu\nu}  \;  ,
\eea
then, for $\overline{h}_{ij} $ and $\overline{A}_i$ decaying sufficiently fast as exhibit in Proposition \ref{Thepropositiononclosingthebootstrapargumenttoactuallyboundtheenergy}, we have the following estimates on $h$\;, and on $A$ in the Lorenz gauge, for the norm constructed using wave coordinates, by taking the sum over all indices in wave coordinates. That there exists a constant $E(N)$\;, that depends on $\overline{c} (N, \ga)$\;, such that for all $|I| \leq N -  \lfloor  \frac{n}{2} \rfloor  - 1$\;, we have the following estimates in the whole complement of the future causal of the compact $\cal{K}  \subset \Sigma$\;, 

\bea
 \notag
  && \sum_{\mu= 0}^{n} |\derm  ( \Lie_{Z^I}  A_{\mu} ) (t,x)  |     +   \sum_{\mu, \nu = 0}^{n}  |\derm  ( \Lie_{Z^I}  h_{\mu\nu} ) (t,x)  |   \\
   \notag
    &\leq&  C(\cal{K} )  \cdot \frac{E^{ext} (N) }{(1+t+|r-t|)^{\frac{(n-1)}{2}} (1+|r-t|)^{1+\gamma}} \; ,\\
      \eea
and
 \bea
 \notag
  \sum_{\mu= 0}^{n}  |\Lie_{Z^I} A_{\mu} (t,x)  | +   \sum_{\mu, \nu = 0}^{n}  |\Lie_{Z^I}  h_{\mu\nu} (t,x)  |  &\leq&C(\cal{K} )     \cdot c(\ga)  \cdot \frac{E^{ext} (N) }{(1+t+|r-t|)^{\frac{(n-1)}{2}} (1+|r-t|)^{\gamma}} \; , \\
      \eea
            where $Z^I$ are the Minkowski vector fields.

      In particular, the gauge invariant norm on the Yang-Mills curvature decays as follows, for all $|I| \leq N -  \lfloor  \frac{n}{2} \rfloor  - 1$\;,
 \bea
 \notag
 \sum_{\mu, \nu = 0}^{n}  |\Lie_{Z^I} F_{\mu\nu}  (t,x) |  &\leq&C(\cal{K} )  \cdot \frac{E^{ext} (N)}{(1+t+|r-t|)^{\frac{(n-1)}{2}} (1+|r-t|)^{1+\gamma}}  \\
  \notag
&& +     C(\cal{K} )   \cdot c(\ga)  \cdot \frac{ E^{ext} (N) }{(1+t+|r-t|)^{(n-1)} (1+|r-t|)^{2\gamma}} \; . \\
      \eea
      
      Furthermore, if one defines $w$ as follows, 
\bea
w(q):=\begin{cases} (1+|r-t|)^{1+2\gamma} \quad\text{when }\quad r-t>0 \;, \\
         1 \,\quad\text{when }\quad r-t<0 \; , \end{cases}
\eea
and if we define  $\Sigma_t^{ext} (\cal K)$ as being the time evolution in wave coordinates of $\Sigma$ in the future of the causal complement of $\cal{K}$\;, then for all time $t$\;, we have
\bea\label{theboundinthetheoremonEnbyconstantEN}
\notag
&& \E _{N}^{ext} (\cal K) (t) \\
\notag
&:=&  \sum_{|J|\leq N} \big( \|w^{1/2}   \derm ( \Lie_{Z^J} h   (t,\cdot) )  \|_{L^2 (\Sigma_t^{ext} (\cal K)) } +  \|w^{1/2}   \derm ( \Lie_{Z^J}  A   (t,\cdot) )  \|_{L^2 (\Sigma_t^{ext} (\cal K))  }  \big) \\
&\leq& E^{ext}(N) \; .
\eea
     
More precisely, for any constant $E^{ext}(N)$\;, there exist two constants, a constant $c_1$ that depends on $\ga > 0$ and on $n \geq 5$, and a constant $c_2$ (to bound $ \overline{\E}_N (0)$ defined in \eqref{definitionoftheenergynormforinitialdata}), that depends on $E^{ext} (N)$\,, on $N \geq 2 \lfloor  \frac{n}{2} \rfloor  + 2$ and on $w$ (i.e. depends on $\gamma$), such that if
\bea\label{AssumptiononinitialdataforcertainfirstminimalLiederivativesglobalexistenceanddecay}
 \overline{\E}_{ ( \lfloor  \frac{n}{2} \rfloor  +1)} (0)  \leq c_1(\ga, n ) \; ,
\eea
and if
\bea\label{Assumptiononinitialdataforglobalexistenceanddecay}
 \overline{\E_N} (0) \leq c_2 (E^{ext}(N), N, \ga) \; ,
\eea
then, we have for all time $t$\,, 
\bea\label{Theboundontheglobaleenergybyaconstantthatwechoose}
 \E_{N}^{ext} (\cal K)  (t) \leq E^{ext}(N) \; .
\eea

\end{theorem}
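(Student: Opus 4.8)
The plan is to set up a continuity (bootstrap) argument in the exterior region, where the smallness of the initial data $\overline{\E}_N(0)$ is propagated into a uniform bound on the weighted higher-order energy $\E_N^{ext}(\cal K)(t)$. First I would fix the weight $w$ (hence $\gamma$) and the target constant $E^{ext}(N)$, and define the bootstrap set as the set of times $t$ for which $\E_N^{ext}(\cal K)(t) \leq 2E^{ext}(N)$ (or some fixed multiple). This set is non-empty (it contains $t=0$ by the data assumption) and closed by continuity of $t \mapsto \E_N^{ext}(\cal K)(t)$; the heart of the matter is to show it is open, i.e., that on this set one can in fact improve the bound to $\E_N^{ext}(\cal K)(t) \leq E^{ext}(N)$. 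For this, one commutes the Einstein-Yang-Mills wave system (in the Lorenz gauge and wave coordinates, as recalled in the excerpt) with products $\Lie_{Z^I}$ of Minkowski vector fields for $|I|\leq N$, which, by the computations imported from \cite{G4}, yields a system of the form \eqref{nonlinearsystemoftensorialwaveequations} with source terms bounded as in \eqref{liederivativeindirectionofMinkowskiforsourcetermsforA}.

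Next I would feed this commuted system into the exterior energy estimate of Corollary \ref{TheenerhyestimatewithtermsinvolvingHandderivativeofHandwithwandhatw}, which is applicable precisely because the bootstrap assumption (together with $n\geq 4$ and the propagated pointwise decay coming from a weighted Klainerman-Sobolev inequality) forces $\sum_{\mu,\nu}|H_{\mu\nu}| < 1/n$ as required by \eqref{boundednessoftheperturbationbigHbyaconstant}. The energy estimate produces, on the left, the weighted $L^2$ norm of $\derm(\Lie_{Z^I}\Phi_V)$ on $\Sigma_t^{ext}(\cal K)$ plus the good space-time integral of the covariant tangential derivatives weighted by $\widehat{w}'$, and on the right, the initial data term $\overline{\E}_N(0)$ plus space-time integrals of the source terms $S_V$ tested against $\derm(\Lie_{Z^I}\Phi_V)$ and against the fields themselves. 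The strategy is then to estimate each of these source integrals: the quasilinear and semilinear terms involving $\derm h$ are controlled using the pointwise decay of $\derm(\Lie_{Z^J}h)$ from Klainerman-Sobolev together with, where a factor of an undifferentiated field appears, the Hardy-type inequality of Corollary \ref{HardytypeinequalityforintegralstartingatROm} to trade a factor of $(1+|q|)^{-1}A$ for $\derm A$ in $L^2$. Because we are in the exterior region $q=r-t>0$, the weight $w(q)=(1+|q|)^{1+2\gamma}$ supplies extra decay in $|q|$ exactly where it is needed, and — crucially — its derivative has the good sign there, so the space-time integral coming from $\widehat{w}'$ appears on the favorable side and absorbs the borderline tangential-derivative contributions.

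After bounding all source terms, one is left with an inequality of the schematic shape $\E_N^{ext}(\cal K)(t)^2 \lesssim \overline{\E}_N(0)^2 + \int_0^t f(\tau)\,\E_N^{ext}(\cal K)(\tau)^2\,d\tau$, where $f$ is integrable in $\tau$ in the exterior region (this integrability is the whole reason the exterior argument works for all $n\geq 4$ while the interior argument in the next paper requires $n=3$). Applying the exterior Grönwall inequality (Lemma \ref{Gronwallinequalityintheexteriorontheenergyfornequal4} in the excerpt, and its analogue for $n\geq 5$) gives $\E_N^{ext}(\cal K)(t) \lesssim \overline{\E}_N(0)\cdot \exp(\int_0^\infty f) \lesssim \overline{\E}_N(0)$, and choosing $c_2 = c_2(E^{ext}(N),N,\gamma)$ small enough makes the right-hand side $\leq E^{ext}(N)$, which strictly improves the bootstrap assumption and closes the argument — this is exactly the content of Proposition \ref{Thepropositiononclosingthebootstrapargumenttoactuallyboundtheenergy}. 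The pointwise decay statements on $\Lie_{Z^I}A$, $\Lie_{Z^I}h$ and $\Lie_{Z^I}F$, and on $\derm$ of these, then follow from the energy bound \eqref{theboundinthetheoremonEnbyconstantEN} by the weighted Klainerman-Sobolev inequality in the exterior, and the formula \eqref{expF} for $F$ together with $|[A,A]|\lesssim|A|^2$ gives the stated two-term bound on $\Lie_{Z^I}F$. The main obstacle I expect is the bookkeeping of the cubic and quadratic undifferentiated-field terms peculiar to Yang-Mills (the $|A|\cdot|\derm A|$ and $|A|^3$ terms): one must verify that after applying Hardy's inequality in the exterior, the resulting $\tau$-weights are genuinely integrable and that no term forces a loss of derivatives — handling the top-order case $|I|=N$, where only one of the factors can be placed in $L^\infty$ via Klainerman-Sobolev while the other must stay in the energy, is the delicate point, and it is precisely here that the separate control on each component of $\derm\Phi_V$ furnished by the tensor $T$ of Definition \ref{defofthestreessenergymomentumtensorforwaveequationhere} is used.
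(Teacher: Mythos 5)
Your proposal follows essentially the same route as the paper: a bootstrap on the exterior weighted energy, closed by feeding the commuted system into the exterior energy estimate of Corollary \ref{TheenerhyestimatewithtermsinvolvingHandderivativeofHandwithwandhatw}, controlling the source terms via the exterior Klainerman--Sobolev and Hardy inequalities, and applying the Grönwall inequality of Lemma \ref{Gronwallinequalityintheexteriorontheenergyfornequal4} to reach Proposition \ref{Thepropositiononclosingthebootstrapargumenttoactuallyboundtheenergy}, from which the pointwise decay follows. The only imprecision is that the exterior region $\overline{C}$ is $\{q\geq q_0\}$ with $q_0$ possibly negative rather than $\{q>0\}$, but the sign of $\widehat{w}^{\prime}$ remains favorable there because $\mu<0$, so your argument is unaffected.
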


 \section{Energy estimates in the exterior region}

 The goal is to prove exterior stability of the Minkowski space-time for $n=4$ in this paper, and for $n=3$ in the paper that follows. We note that exterior stability for $n\geq 5$ is implied from the above, and we have already prove global stability for for $n\geq 5$ in a previous paper. For this, we will derive exterior energy estimates and we will then use a Klainerman-Sobolev inequality in the exterior and make a continuity argument, as before, yet on the energy in the exterior.

\subsection{The definitions and notations}\

\begin{definition}\label{definitionofMinkowskiandcovariantderivtaiveofMinkwoforafixedgivensystemofcoordinates}
Let  $(x^0, x^1, \ldots, x^n)$ be a fixed system of coordinates (which will ultimately be chosen to be a system of wave coordinates), which we shall also write, sometimes, as $(t, x^1, \ldots, x^n)$\;, where $t = x^0$\;. We define $m$ be the Minkowski metric $(-1, +1, \ldots, +1)$ in this fixed system of coordinates. We define $\derm$ be the covariant derivative associate to the metric $m$\;..

\end{definition}

\begin{definition}
For an arbitrary tensor or arbitrary order, say $K_{\a\b}$, either valued in the Lie algebra associated to the Lie group $G$ or a scalar, we define
 \bea
 \notag
|  K |^2 &:=&  \sum_{\a,\; \b \in  \{t, x^1, \ldots, x^n \}} |  K_{\a\b} |^2   \, .
\eea
Since $\derm  K_{UV}$ is tensor, and $\derm  K$ is a tensor of higher order, the definition of the norm gives
 \bea
 \notag
|  \derm  K_{UV} |^2 &=&  \sum_{\a,\; \b,\, \mu \in  \{t, x^1, \ldots, x^n \}} |  \derm_{\mu} K_{UV} |^2   \, ,
\eea
and
 \bea
 \notag
|  \derm  K |^2 &=&  \sum_{\a,\; \b,\, \mu \in  \{t, x^1, \ldots, x^n \}} |  \derm_{\mu} K_{\a\b} |^2   \, .
\eea
Note that this definition coincides with the definition of the norm that we gave in \cite{G4}, although we introduced it there in a more general fashion using contractions with a constructed euclidian metric.
\end{definition}

 \begin{definition}\label{defofthestreessenergymomentumtensorforwaveequationhere}
 Let either $\Phi$ be a tensor either valued in the Lie algebra $\cal G$, associated to the Loe group $G$\;, or a scalar. In particular, we will have in this paper either $\Phi_{V} = A_{V}$ or $\Phi_{UV} = h_{UV}$\;.
 We consider the following non-symmetric tensor for wave equations. When $\Phi =A$\;, we define
\bea\label{definitionofthewavestreessenergymomentumtensorfortensorphiVwhereVisanyvector}
\notag
T^{(\bf{g}) \; \mu}_{\;\;\;\;\;\;\;\;\;\;\nu} (\Phi_{V} )    =  g^{\mu\a}< \derm_\a \Phi_{V} ,   \derm_\nu \Phi_{V} > - \frac{1}{2} m^{\mu}_{\;\;\;\nu}  \cdot g^{\a\b}  < \derm_\a \Phi_{V} ,   \derm_\b \Phi_{V} > \;, \\
 \eea
 and when $\Phi = h$\;, we define
 \bea\label{definitionofthewavestreessenergymomentumtensorfortensorphiVwhereUVareanyvector}
\notag
T^{(\bf{g}) \; \mu}_{\;\;\;\;\;\;\;\;\;\;\nu} (\Phi_{UV} )    =  g^{\mu\a}< \derm_\a \Phi_{UV}  ,   \derm_\nu \Phi_{UV}  > - \frac{1}{2} m^{\mu}_{\;\;\;\nu}  \cdot g^{\a\b}  < \derm_\a \Phi_{UV}  ,   \derm_\b \Phi_{UV}  > \;, \\
 \eea
where we raise index with respect to the Minkowski metric $m$\,, defined to be Minkowski in wave coordinates. We consider $\Phi$ to be a field decaying fast enough at spatial infinity, so that there is no contribution at null infinity.

For simplicity of notation, we will write from now on, either $\Phi$ to say $\Phi_V$\;, or to say $\Phi_{UV}$ where $U\,,\, V$ are any vectors, which in this paper will be ultimately chosen to be wave coordinates. We will therefore write
\bea
\notag
T^{(\bf{g}) \; \mu}_{\;\;\;\;\;\;\;\;\;\;\nu} (\Phi )    =  g^{\mu\a}< \derm_\a \Phi ,   \derm_\nu \Phi > - \frac{1}{2} m^{\mu}_{\;\;\;\nu}  \cdot g^{\a\b}  < \derm_\a \Phi ,   \derm_\b \Phi > \;, \\
 \eea
 however, we will keep in mind, that we actually have \eqref{definitionofthewavestreessenergymomentumtensorfortensorphiVwhereVisanyvector} or \eqref{definitionofthewavestreessenergymomentumtensorfortensorphiVwhereUVareanyvector} for any vectors $U\;,\; V$.
\end{definition}

\begin{definition}\label{definitionofLtilde}
We define $ \widetilde{L}^{\nu} $ as a vector at a point in the space-time, such that  $T_{\widetilde{L} t}^{(\bf{g})} \geq 0$\;. We note that such a definition does not give a unique vector, however, we will use this to construct $N_{t_1}^{t_2} (q_0) $ in Definition \ref{definitionofNandofNtruncatedbtweentwots} and to define $ \hat{L}^{\nu} $ in Definition \ref{definitionofwidetildeLfordivergencetheoremuse}.
 \end{definition}

\begin{definition}\label{definitionofNandofNtruncatedbtweentwots}
We define $N_{t_1}^{t_2} (q_0) $ as a boundary made by the following:

For a $t_N \geq 0 $\;, we take a curve in the plane $(t, r)$\;, at an $\Om \in \SSS^{n-1}$ fixed, that starts at $(t=t_N\,,\, r=0) $ and extends to the future of $\Sigma_{t_0}$\;, and where $ \widetilde{L}^{\nu} $ (defined in Definition \ref{definitionofLtilde}) is orthogonal (with respect to the Minkowski metric $m$) to that curve at each point. For a $t_N \geq 0 $ large enough, depending on $q_0$\;, we have under the bootstrap assumption, and therefore under the a priori estimates in Lemmas \ref{aprioriestimatesongradientoftheLiederivativesofthefields} and \ref{aprioriestimatefrombootstraponzerothderivativeofAandh1}, that there exits such a curve that is contained in the region $\{(t, x) \;|\; q:= r-t \leq q_0 \}$\;.

We define $N (q_0)$ as the product of $\SSS^{n-1}$ and of such future inextendable curve, and we define $N_{t_1}^{t_2} (q_0) $ as $N (q_0) $ truncated between $t_1$ and $t_2$\;. Thus,  $N (q_0)$ depends on the choice of the starting point $(t_N, 0)$ that is the tip of $N (q_0)$\;, however we write $N(q_0)$ to refer to a choice of such $N(q_0)$\;. To lighten the notation, we write $N$\;, instead of $N(q_0)$\;, where a $q_0$ has been chosen fixed. 

\end{definition} 

\begin{definition} Let $\overline{C}$ be the exterior region defined as the following:

The boundary $N$ (given in Definition \eqref{definitionofNandofNtruncatedbtweentwots}) separates the space-time into two regions: one interior region that we shall call $C$ where space-like curves are contained in a compact region, and the other region is the complement of $C$\;, where space-like curves can go to spatial infinity. We define $\overline{C}$ as the complement of $C$\;. 

\end{definition} 

\begin{definition}\label{definitionoftheexteriorslicesigmat}
 We define for a given fixed $t$\;,
   \bea
\Sigma^{ext}_{t}  &:=&  \Sigma_t  \cap \overline{C} \, .
\eea
\end{definition}

\begin{definition} 

We define $n^{(\bf{m}), \nu}_{\Sigma}$ as the unit orthogonal vector (for the metric $m$) to the hypersurface $\Sigma^{ext}_{t}$\;, and $dv^{(\bf{m})}_\Sigma$ as the induced volume form on $\Sigma_t$\;.

We denote by $n^{(\bf{m}), \nu}_{N}$ an orthogonal vector (for the metric $m$) to the hypersurface $N_{t_1}^{t_2}$\;,  and by $dv^{(\bf{m})}_N$ a volume form on $N_{t_1}^{t_2} $ such that the divergence theorem applies.

\end{definition} 

\begin{definition}\label{definitionofwidetildeLfordivergencetheoremuse}
We define $ \hat{L}^{\nu} $ as a vector proportional to $ \widetilde{L}^{\nu} $ (defined in Definition \ref{definitionofLtilde}), and therefore orthogonal (with respect to the Minkowski metric $m$) to $N$ (that is defined in Definition \ref{definitionofNandofNtruncatedbtweentwots}), and such that $\hat{L}$ is oriented,  and with Euclidian length, in such a way that the stated divergence theorem in what follows hold true: see, for instance, Lemmas \ref{conservationlawwithoutcomputingdivergenceofTmut} and \ref{weightedconservationlawintheexteriorwiththeenergymomuntumtensorcontarctedwithvectordt} and Corollary \ref{WeightedenergyestimateusingHandnosmallnessyet}.
 \end{definition}

\begin{remark}
Based on the construction of $N(q_0)$ in Definition \ref{definitionofNandofNtruncatedbtweentwots}, we have that the exterior region $\overline{C} $ includes the region $\{(t, x) \;|\; q:= r- t \geq q_0 \}$\;.
\end{remark}

    \begin{definition}\label{definitionofbigHandsmallhandrecallofdefinitionofMinkowskimetricminrelationtowavecoordiantesasreminder}
We define $H$ as the 2-tensor given by
\bea
H^{\mu\nu} &:=& g^{\mu\nu}-m^{\mu\nu} \;,
\eea
where $m^{\mu\nu}$ is the inverse of the Minkowski metric $m_{\mu\nu}$\;, defined in Definition \ref{definitionofMinkowskiandcovariantderivtaiveofMinkwoforafixedgivensystemofcoordinates}. 
In addition, we define
\bea
h_{\mu\nu} &:=& g_{\mu\nu} - m_{\mu\nu} \; , \\
h^{\mu\nu} &:=& m^{\mu\mu^\prime}m^{\nu\nu^\prime}h_{\mu^\prime\nu^\prime} \;.
\eea
\end{definition}

\subsection{Conservation laws for wave equations}\
 
\begin{lemma}\label{generalconservationlawwithanyvectorfieldX}
For a vector field $X^{\nu}$\,, we have
 \bea
 \notag
&& \int_{t_1}^{t_2}  \int_{\Sigma^{ext}_{\tau}} \der^{\mu} \big( X^{\nu}T_{\mu\nu}^{(\bf{g})} \big)  \cdot dv^{(\bf{m})} \\
 \notag
 &=&  \int_{t_1}^{t_2}  \int_{\Sigma^{ext}_{\tau} } \Big( \big( \derm^{\mu} X^{\nu}  \big) \cdot T_{\mu\nu}^{(\bf{g})}  + < g^{\mu\a} \derm_{\mu } \derm_\a \Phi_{V}  ,   \derm_\nu \Phi_{V}  >\\
  \notag
&& +( \derm_{\mu } H^{\mu\a} ) \cdot < \derm_\a \Phi_{V}  ,   \derm_\nu \Phi_{V}  > - \frac{1}{2} m^{\mu}_{\;\;\;\nu}  \cdot  ( \derm_{\mu } H^{\a\b} ) \cdot < \derm_\a \Phi_{V}  ,   \derm_\b \Phi_{V}  >   \Big) \cdot dv^{(\bf{m})}  \\
  \notag
 &=&   \int_{\Sigma^{ext}_{t_1} }  \big( X^{\mu}T_{\mu\nu}^{(\bf{g})} \big)  n^{(\bf{m}), \nu}_{\Sigma} \cdot dv^{(\bf{m})}_\Sigma  -  \int_{\Sigma^{ext}_{t_2}}  \big( X^{\mu}T_{\mu\nu}^{(\bf{g})} \big)  n^{(\bf{m}), \nu}_{\Sigma} \cdot dv^{(\bf{m})}_\Sigma \\
 && -  \int_{N_{t_1}^{t_2} }  \big( X^{\mu}T_{\mu\nu}^{(\bf{g})} \big)  n^{(\bf{m}), \nu}_{N} \cdot dv^{(\bf{m})}_N \; .
 \eea
 where the tensor $T_{\mu\nu}$ is defined in Definition \ref{defofthestreessenergymomentumtensorforwaveequationhere}.
\end{lemma}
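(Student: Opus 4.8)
The plan is to prove the identity by establishing the two equalities separately. The first equality is a pointwise (or, after integration, integral) computation: I would expand the divergence $\der^{\mu}\big(X^{\nu}T_{\mu\nu}^{(\bf{g})}\big)$ using the Leibniz rule, writing it as $(\der^{\mu}X^{\nu})\,T_{\mu\nu}^{(\bf{g})}+X^{\nu}\,\der^{\mu}T_{\mu\nu}^{(\bf{g})}$, where all derivatives are the Minkowski covariant derivatives $\derm$ (so $\der^{\mu}=m^{\mu\lambda}\derm_{\lambda}$, since $X$, $T$ are tensors and the paper's $\der$ on such objects means $\derm$). The main work is computing $\der^{\mu}T_{\mu\nu}^{(\bf{g})}$ from Definition \ref{defofthestreessenergymomentumtensorforwaveequationhere}. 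Since $\derm m = 0$ and $\derm$ raises/lowers indices freely, differentiating $T_{\mu}^{(\bf g)\;\nu}=g^{\mu\a}\langle\derm_{\a}\Phi,\derm_{\nu}\Phi\rangle-\tfrac12 m^{\mu}_{\;\;\nu}g^{\a\b}\langle\derm_{\a}\Phi,\derm_{\b}\Phi\rangle$ gives terms where $\derm^{\mu}$ hits $g^{\mu\a}$ (producing the $(\derm_{\mu}H^{\mu\a})$ and $-\tfrac12 m^{\mu}_{\;\;\nu}(\derm_{\mu}H^{\a\b})$ terms, using $g^{\mu\a}=m^{\mu\a}+H^{\mu\a}$ and $\derm m=0$), plus terms where $\derm^{\mu}$ hits a $\derm_{\a}\Phi$ factor. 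For the latter, I would use the symmetry of the Hessian of $\Phi$ for the Minkowski connection (flat, so $\derm_{\mu}\derm_{\a}\Phi=\derm_{\a}\derm_{\mu}\Phi$) to rewrite the $g^{\mu\a}$-contraction of second derivatives as $g^{\mu\a}\langle\derm_{\mu}\derm_{\a}\Phi,\derm_{\nu}\Phi\rangle$, which is exactly the wave-operator term $\langle g^{\mu\a}\derm_{\mu}\derm_{\a}\Phi_V,\derm_{\nu}\Phi_V\rangle$ appearing in the statement, and check that the cross terms coming from the $g^{\a\b}\langle\derm_{\a}\Phi,\derm_{\b}\Phi\rangle$ piece cancel against the remaining first-derivative terms — this cancellation is the standard "stress-energy computation" and is where the non-symmetric choice of $T$ (the $m^{\mu}_{\;\;\nu}$ rather than $g^{\mu}_{\;\;\nu}$ in the trace term) is arranged precisely so that the leftover is organized into the three displayed lower-order terms.

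For the second equality I would apply the divergence theorem for the Minkowski metric $m$ on the region $\overline{C}\cap\{t_1\le \tau\le t_2\}$, whose boundary consists of the two spacelike slices $\Sigma^{ext}_{t_1}$, $\Sigma^{ext}_{t_2}$ and the lateral boundary $N_{t_1}^{t_2}$. The vector field being integrated is $Y^{\mu}:=X^{\nu}T^{(\bf g)\;\mu}_{\;\;\;\;\;\;\nu}$; the divergence $\derm_{\mu}Y^{\mu}$ with respect to the Minkowski volume form is $\der^{\mu}(X^{\nu}T_{\mu\nu}^{(\bf g)})$ once indices are handled consistently, so the left side matches. The boundary contributions are then the flux integrals with the Minkowski unit normals $n^{(\bf m),\nu}_{\Sigma}$, $n^{(\bf m),\nu}_{N}$ and induced volume forms $dv^{(\bf m)}_{\Sigma}$, $dv^{(\bf m)}_{N}$ as introduced in the definitions; the sign on the $\Sigma_{t_2}$ and $N$ terms is fixed by the outward-orientation convention, and the orientation/normalization of $n^{(\bf m)}_N$, $dv^{(\bf m)}_N$ is exactly what Definition \ref{definitionofwidetildeLfordivergencetheoremuse} was set up to make work. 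I would also invoke the stated hypothesis that $\Phi$ decays fast enough at spatial infinity so that there is no contribution at null infinity (equivalently, the lateral piece is genuinely $N_{t_1}^{t_2}$ and not a boundary-at-infinity term).

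The main obstacle I anticipate is bookkeeping rather than conceptual: getting every index raised/lowered with the correct metric ($m$ versus $g$) and tracking the precise cancellations in $\derm^{\mu}T_{\mu\nu}^{(\bf g)}$ so that exactly the three lower-order terms $(\derm_{\mu}H^{\mu\a})\langle\derm_{\a}\Phi,\derm_{\nu}\Phi\rangle$, $-\tfrac12 m^{\mu}_{\;\;\nu}(\derm_{\mu}H^{\a\b})\langle\derm_{\a}\Phi,\derm_{\b}\Phi\rangle$, and the wave term survive — in particular verifying that no spurious term involving $\derm H$ contracted differently, or involving three first derivatives of $\Phi$, remains. A secondary technical point is justifying the divergence theorem on $\overline{C}\cap\{t_1\le\tau\le t_2\}$ despite the corners where $N$ meets the slices $\Sigma_{t_i}$ and the fact that $N$ is only a Lipschitz (piecewise smooth) hypersurface built from the curves of Definition \ref{definitionofNandofNtruncatedbtweentwots}; this is handled by a routine approximation/regularization argument, and the decay hypothesis on $\Phi$ controls the behavior at spatial infinity. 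Once these are in place the identity follows, and I would state the case $\Phi=h$ (with $\Phi_{UV}$ in place of $\Phi_V$) as entirely parallel, exactly as the definition of $T$ already anticipates.
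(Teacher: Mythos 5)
Your proposal is correct and follows essentially the same route as the paper: Leibniz rule for $\derm^{\mu}(X^{\nu}T^{(\bf g)}_{\mu\nu})$, direct computation of $\derm^{\mu}T^{(\bf g)}_{\mu\nu}$ using $\derm m=0$, the symmetry of $g$, and the symmetry of the Minkowski Hessian (the paper realizes the latter by passing to wave coordinates where the Christoffel symbols vanish and then invoking tensoriality), followed by the substitution $\derm g=\derm H$ and the Minkowski divergence theorem on the region bounded by $\Sigma^{ext}_{t_1}$, $\Sigma^{ext}_{t_2}$ and $N_{t_1}^{t_2}$. The cancellation you flag is exactly the one the paper carries out, between the two second-derivative terms $g^{\mu\a}\langle\derm_{\a}\Phi,\derm_{\mu}\derm_{\nu}\Phi\rangle$ and $-g^{\a\b}\langle\derm_{\nu}\derm_{\a}\Phi,\derm_{\b}\Phi\rangle$.
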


\begin{proof}

Contracting the stress-energy-momentum tensor with a vector field $X^{\nu}$\,, and applying the divergence theorem to $X^{\nu}T_{\mu\nu}$\,, one gets
 
 \bea
 \notag
&&  \int_{\Sigma^{ext}_{t_2} }  \big( X^{\mu}T_{\mu\nu}^{(\bf{g})} \big)  n^{(\bf{g}), \nu}_{\Sigma} \cdot dv^{(\bf{g})}_\Sigma +   \int_{N_{t_1}^{t_2} }  \big( X^{\mu}T_{\mu\nu}^{(\bf{g})} \big)  n^{(\bf{g}), \nu}_{N} \cdot dv^{(\bf{g})}_N + \int_{t_1}^{t_2}  \int_{\Sigma^{ext}_{\tau} (q)} \der^{\mu} \big( X^{\nu}T_{\mu\nu}^{(\bf{g})} \big)  \cdot dv^{(\bf{g})} \\
 \notag
 &=&   \int_{\Sigma^{ext}_{t_1} }  \big( X^{\mu}T_{\mu\nu}^{(\bf{g})} \big)  n^{(\bf{g}), \nu}_{\Sigma} \cdot dv^{(\bf{g})}_\Sigma  \; . \\
 \eea

We then compute 
\bea
\derm^{\mu }T^{(\bf{g})}_{\mu\nu} &:=& m^{\mu\la}\derm_{\la }T_{\mu\nu} =\derm_{\mu } T^{\mu}_{\;\;\;\nu} \; .
\eea

We get
\beaa
&& \derm_{\mu }T^{(\bf{g}) \; \mu}_{\;\;\;\;\;\;\;\;\;\;\nu}  \\
&=& ( \derm_{\mu } g^{\mu\a} ) \cdot < \derm_\a \Phi ,   \derm_\nu \Phi > - \frac{1}{2} m^{\mu}_{\;\;\;\nu}  \cdot  ( \derm_{\mu } g^{\a\b} ) \cdot < \derm_\a \Phi ,   \derm_\b \Phi > \\
&& + g^{\mu\a}< \derm_{\mu } \derm_\a \Phi ,   \derm_\nu \Phi > +  g^{\mu\a}< \derm_\a \Phi ,  \derm_{\mu } \derm_\nu \Phi > \\
&&  - \frac{1}{2} m^{\mu}_{\;\;\;\nu}  \cdot g^{\a\b}  <\derm_{\mu } \derm_\a \Phi ,   \derm_\b \Phi >  - \frac{1}{2} m^{\mu}_{\;\;\;\nu}  \cdot g^{\a\b}  < \derm_\a \Phi , \derm_{\mu }  \derm_\b \Phi > \\
&& \text{(where we used the fact that $\derm m = 0$)} \\
&=& ( \derm_{\mu } g^{\mu\a} ) \cdot < \derm_\a \Phi ,   \derm_\nu \Phi > - \frac{1}{2} m^{\mu}_{\;\;\;\nu}  \cdot  ( \derm_{\mu } g^{\a\b} ) \cdot < \derm_\a \Phi ,   \derm_\b \Phi > \\
&& + < g^{\mu\a} \derm_{\mu } \derm_\a \Phi ,   \derm_\nu \Phi > +  g^{\mu\a}< \derm_\a \Phi ,  \derm_{\mu } \derm_\nu \Phi > \\
&&  - m^{\mu}_{\;\;\;\nu}  \cdot g^{\a\b}  <\derm_{\mu } \derm_\a \Phi ,   \derm_\b \Phi > \\
&& \text{(using the symmetry of the metric $g$)} \\
&=& ( \derm_{\mu } g^{\mu\a} ) \cdot < \derm_\a \Phi ,   \derm_\nu \Phi > - \frac{1}{2} m^{\mu}_{\;\;\;\nu}  \cdot  ( \derm_{\mu } g^{\a\b} ) \cdot < \derm_\a \Phi ,   \derm_\b \Phi > \\
&& + < g^{\mu\a} \derm_{\mu } \derm_\a \Phi ,   \derm_\nu \Phi > +  g^{\mu\a}< \derm_\a \Phi ,  \derm_{\mu } \derm_\nu \Phi > \\
&&  -   g^{\a\b}  <\derm_{\nu } \derm_\a \Phi ,   \derm_\b \Phi > \; .
\eeaa

Now, we can compute in wave coordinates $\derm_{\nu } \derm_\a \Phi  $, and if the end result for $\derm^{\mu }T_{\mu\nu}$ gives a tensor in $\nu$, then the identity that we obtain will be true independently of the system of coordinates. In wave coordinates, the Christoffel symbols are vanishing and therefore, the two derivates commute, i.e.
\beaa
\derm_{\nu } \derm_\a \Phi  = \derm_{\a } \derm_\nu \Phi \; .
\eeaa
Thus, in wave coordinates
\beaa
&& \derm^{\mu }T^{(\bf{g})}_{\mu\nu}  \\
&=&( \derm_{\mu } g^{\mu\a} ) \cdot < \derm_\a \Phi ,   \derm_\nu \Phi > - \frac{1}{2} m^{\mu}_{\;\;\;\nu}  \cdot  ( \derm_{\mu } g^{\a\b} ) \cdot < \derm_\a \Phi ,   \derm_\b \Phi > \\
&& + < g^{\mu\a} \derm_{\mu } \derm_\a \Phi ,   \derm_\nu \Phi > +  g^{\mu\a}< \derm_\a \Phi ,  \derm_{\mu } \derm_\nu \Phi > \\
&&  -   g^{\a\b}  <\derm_{\a } \derm_\nu \Phi ,   \derm_\b \Phi > \\
&=&( \derm_{\mu } g^{\mu\a} ) \cdot < \derm_\a \Phi ,   \derm_\nu \Phi > - \frac{1}{2} m^{\mu}_{\;\;\;\nu}  \cdot  ( \derm_{\mu } g^{\a\b} ) \cdot < \derm_\a \Phi ,   \derm_\b \Phi > \\
&& + < g^{\mu\a} \derm_{\mu } \derm_\a \Phi ,   \derm_\nu \Phi > \; .
\eeaa

 Since the end result is a tensor in $\nu$, we obtain
 \beaa
\derm^{\mu }T^{(\bf{g})}_{\mu\nu} &=&( \derm_{\mu } g^{\mu\a} ) \cdot < \derm_\a \Phi ,   \derm_\nu \Phi > - \frac{1}{2} m^{\mu}_{\;\;\;\nu}  \cdot  ( \derm_{\mu } g^{\a\b} ) \cdot < \derm_\a \Phi ,   \derm_\b \Phi > \\
&& + < g^{\mu\a} \derm_{\mu } \derm_\a \Phi ,   \derm_\nu \Phi > \; .
\eeaa

 Contracting the stress-energy-momentum tensor with respect to the first index, with a vector field $X^{\nu}$, and computing the covariant divergence of $X^{\nu}T_{\mu\nu}$, one gets
\beaa
\derm^{\mu} \big( X^{\nu}T_{\mu\nu}^{(\bf{g})} \big) &=& \big( \derm^{\mu} X^{\nu}  \big) \cdot T_{\mu\nu}^{(\bf{g})} +\big( X^{\nu}  \big) \cdot \derm^{\mu} T_{\mu\nu}^{(\bf{g})} \\
&=& \derm^{\mu} X^{\nu}  \cdot T_{\mu\nu}^{(\bf{g})} +  \derm^{\mu }T^{(\bf{g})}_{\mu X}  \; .
\eeaa
We can then write 
 \bea
 \notag
&&\derm^{\mu} \big( X^{\nu}T_{\mu\nu}^{(\bf{g})} \big) \\
\notag
&=& \big( \derm^{\mu} X^{\nu}  \big) \cdot T_{\mu\nu}^{(\bf{g})} + < g^{\mu\a} \derm_{\mu } \derm_\a \Phi ,   \derm_\nu \Phi >\\
 \notag
&& +( \derm_{\mu } g^{\mu\a} ) \cdot < \derm_\a \Phi ,   \derm_\nu \Phi > - \frac{1}{2} m^{\mu}_{\;\;\;\nu}  \cdot  ( \derm_{\mu } g^{\a\b} ) \cdot < \derm_\a \Phi ,   \derm_\b \Phi > \; . \\ 
\eea

 This leads to
 
 \bea
 \notag
&& \int_{t_1}^{t_2}  \int_{\Sigma^{ext}_{\tau}} \der^{\mu} \big( X^{\nu}T_{\mu\nu}^{(\bf{g})} \big)  \cdot dv^{(\bf{m})} \\
 \notag
 &=&  \int_{t_1}^{t_2}  \int_{\Sigma^{ext}_{\tau} } \Big( \big( \derm^{\mu} X^{\nu}  \big) \cdot T_{\mu\nu}^{(\bf{g})}  + < g^{\mu\a} \derm_{\mu } \derm_\a \Phi ,   \derm_\nu \Phi >\\
  \notag
&& +( \derm_{\mu } g^{\mu\a} ) \cdot < \derm_\a \Phi ,   \derm_\nu \Phi > - \frac{1}{2} m^{\mu}_{\;\;\;\nu}  \cdot  ( \derm_{\mu } g^{\a\b} ) \cdot < \derm_\a \Phi ,   \derm_\b \Phi >   \Big) \cdot dv^{(\bf{m})}  \\
  \notag
 &=&   \int_{\Sigma^{ext}_{t_1} }  \big( X^{\mu}T_{\mu\nu}^{(\bf{g})} \big)  n^{(\bf{m}), \nu}_{\Sigma} \cdot dv^{(\bf{m})}_\Sigma  -  \int_{\Sigma^{ext}_{t_2}}  \big( X^{\mu}T_{\mu\nu}^{(\bf{g})} \big)  n^{(\bf{m}), \nu}_{\Sigma} \cdot dv^{(\bf{m})}_\Sigma \\
 && -  \int_{N_{t_1}^{t_2} }  \big( X^{\mu}T_{\mu\nu}^{(\bf{g})} \big)  n^{(\bf{m}), \nu}_{N} \cdot dv^{(\bf{m})}_N \; .
 \eea

Using the definition of $H^{\mu\nu}:=g^{\mu\nu}-m^{\mu\nu}$, we get the result
\end{proof}

We recall that $ \hat{L}^{\nu} $ is defined as in Definition \ref{definitionofwidetildeLfordivergencetheoremuse}.
 
\begin{lemma}\label{conservationlawwithoutcomputingdivergenceofTmut}
We have
  \bea
  \notag
   &&   \int_{\Sigma^{ext}_{t_2} } \Big(  - \frac{1}{2} g^{t t}< \derm_t \Phi_{V} ,   \derm_t \Phi_{V} >   + \frac{1}{2} g^{j i}  < \derm_j \Phi_{V} ,   \derm_i \Phi_{V} > \Big)    \cdot d^{n}x \\
   \notag
&& +  \int_{N_{t_1}^{t_2} }  \big( T_{\hat{L} t}^{(\bf{g})} \big)  \cdot dv^{(\bf{m})}_N \\
     \notag
   &=&   \int_{\Sigma^{ext}_{t_1} }   \Big(  - \frac{1}{2} g^{t t}< \derm_t \Phi_{V} ,   \derm_t \Phi_{V} >   + \frac{1}{2} g^{j i}  < \derm_j \Phi_{V},   \derm_i \Phi_{V} >  \Big)  \cdot d^{n}x  \\
     \notag
     &&- \int_{t_1}^{t_2}  \int_{\Sigma^{ext}_{\tau} } \Big( \derm^{\mu }T_{\mu t}^{(\bf{g})}     \Big) \cdot d\tau d^{n}x   \; .\\
 \eea

\end{lemma}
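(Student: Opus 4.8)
The plan is to specialize the general conservation law of Lemma \ref{generalconservationlawwithanyvectorfieldX} to the constant vector field $X^\nu = (\partial_t)^\nu$, i.e. the vector with components $X^t = 1$ and $X^i = 0$ in our fixed system of coordinates. Since $m$ has constant coefficients in these coordinates and $X$ is a coordinate vector field, we have $\derm^\mu X^\nu = 0$, so the bulk term $\big(\derm^\mu X^\nu\big)\cdot T^{(\bf g)}_{\mu\nu}$ drops out entirely. What remains of the interior integrand is precisely $\derm^\mu T^{(\bf g)}_{\mu t}$, which matches the last term on the right-hand side of the claimed identity.

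Next I would identify the boundary integrands. On the slices $\Sigma^{ext}_{t_1}$ and $\Sigma^{ext}_{t_2}$ the outward unit normal (for $m$) is proportional to $\partial_t$, and one computes $X^\mu T^{(\bf g)}_{\mu\nu}\, n^{(\bf m),\nu}_\Sigma \cdot dv^{(\bf m)}_\Sigma = T^{(\bf g)}_{tt}\cdot d^n x$ after accounting for the sign/normalization of the Minkowski normal. Then expanding $T^{(\bf g)}_{tt}$ from Definition \ref{defofthestreessenergymomentumtensorforwaveequationhere} with $\mu=\nu=t$: the term $g^{t\alpha}\langle\derm_\alpha\Phi,\derm_t\Phi\rangle - \tfrac12 m^t_{\;\;t}\, g^{\alpha\beta}\langle\derm_\alpha\Phi,\derm_\beta\Phi\rangle$. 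Using $m^t_{\;\;t} = m^{t\mu}m_{\mu t} = 1$ and splitting $g^{\alpha\beta}\langle\derm_\alpha\Phi,\derm_\beta\Phi\rangle = g^{tt}\langle\derm_t\Phi,\derm_t\Phi\rangle + 2g^{tj}\langle\derm_t\Phi,\derm_j\Phi\rangle + g^{ij}\langle\derm_i\Phi,\derm_j\Phi\rangle$, the cross terms $g^{tj}\langle\derm_t\Phi,\derm_j\Phi\rangle$ cancel against the $g^{t\alpha}$ contribution, leaving exactly $-\tfrac12 g^{tt}\langle\derm_t\Phi,\derm_t\Phi\rangle + \tfrac12 g^{ji}\langle\derm_j\Phi,\derm_i\Phi\rangle$, which is the integrand appearing in the statement. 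On the lateral boundary $N_{t_1}^{t_2}$, the contraction $X^\mu T^{(\bf g)}_{\mu\nu}\, n^{(\bf m),\nu}_N = T^{(\bf g)}_{t\nu}\, n^{(\bf m),\nu}_N$; here I would invoke Definition \ref{definitionofwidetildeLfordivergencetheoremuse}, which fixes $\hat L$ to be the appropriately oriented and normalized vector proportional to the $m$-normal of $N$ so that $T^{(\bf g)}_{t\nu}\, n^{(\bf m),\nu}_N \cdot dv^{(\bf m)}_N = T^{(\bf g)}_{\hat L t}\cdot dv^{(\bf m)}_N$ (using symmetry of $T$ in the relevant sense, or simply the definition $T_{\hat L t} = \hat L^\nu T_{\nu t}$), absorbing all orientation and length factors into $\hat L$.

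Finally I would collect the three boundary contributions with the signs dictated by Lemma \ref{generalconservationlawwithanyvectorfieldX} — the $\Sigma^{ext}_{t_1}$ term with a plus sign, the $\Sigma^{ext}_{t_2}$ and $N_{t_1}^{t_2}$ terms with minus signs — and move them to the appropriate sides to obtain the asserted equation, with the space-time integral of $\derm^\mu T^{(\bf g)}_{\mu t}$ on the right. The only real subtlety, and the step I would be most careful about, is bookkeeping the orientations and the Minkowski-versus-metric normalizations on the boundary pieces — in particular making sure the sign in front of $g^{tt}\langle\derm_t\Phi,\derm_t\Phi\rangle$ comes out negative (reflecting that $g^{tt}$ is close to $-1$, so this term is in fact positive and controls the energy) — but this is exactly what Definitions \ref{definitionofLtilde}, \ref{definitionofNandofNtruncatedbtweentwots} and \ref{definitionofwidetildeLfordivergencetheoremuse} were set up to handle, so no new analytic input is needed beyond the already-established Lemma \ref{generalconservationlawwithanyvectorfieldX}.
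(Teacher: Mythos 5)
Your proposal is essentially the paper's own proof: specialize Lemma \ref{generalconservationlawwithanyvectorfieldX} to $X=\partial_t$, kill the deformation term since $\derm^\mu(\partial_t)^\nu=0$, and expand the boundary integrand. The only wrinkle is that Definition \ref{defofthestreessenergymomentumtensorforwaveequationhere} defines the \emph{mixed} tensor $T^{\mu}_{\;\;\nu}$, so the expression you wrote with $\mu=\nu=t$ is $T^{t}_{\;\;t}=+\tfrac12 g^{tt}\langle\derm_t\Phi,\derm_t\Phi\rangle-\tfrac12 g^{ji}\langle\derm_j\Phi,\derm_i\Phi\rangle$, and one must still lower the index via $T_{tt}=m_{tt}T^{t}_{\;\;t}=-T^{t}_{\;\;t}$ to land on the integrand in the statement — your final formula is correct, but the intermediate identification is off by exactly this factor of $m_{tt}=-1$.
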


\begin{proof}
 
 Considering the metric $m$, we know by definition of $m$ being the Minkowski metric in wave coordinate $\{t, x^1, \ldots, x^n \}$, that for $X = \frac{\pa}{\pa t} $, we then have
 \beaa
 \big( \derm^{\mu}  \big( \frac{\pa}{\pa t}  \big)^{\nu} \big) \cdot T_{\mu\nu}^{(\bf{g})}   &=&  0 \; , \\
n^{(\bf{m}), \nu}_{\Sigma}  &=&  \big( \frac{\pa}{\pa t}  \big)^{\nu} \; , \\
 dv^{(\bf{m})}_\Sigma &=& dx^1 \ldots dx^n  := d^{n} x \; .
 \eeaa

Hence, the conservation law in Lemma \ref{generalconservationlawwithanyvectorfieldX}, obtained through the divergence theorem for the non-symmetric tensor $T_{\mu t}$, gives
    
 \bea\label{conservationlawforvectorfieldt}
 \notag
  \int_{t_1}^{t_2}  \int_{\Sigma^{ext}_{\tau} } \Big(   \derm^{\mu }T_{\mu t}^{(\bf{g})}    \Big) \cdot d\tau d^{n}x  &=&   \int_{\Sigma^{ext}_{t_2} }  T_{ t t}^{(\bf{g})}   \cdot d^{n}x -  \int_{\Sigma^{ext}_{t_1} }   T_{t t}^{(\bf{g})} \cdot d^{n}x  \\
  \notag
 && -  \int_{N_{t_1}^{t_2} }  \big( T_{\hat{L} t}^{(\bf{g})} \big)  \cdot dv^{(\bf{m})}_N \; .\\
 \eea

We compute
 \beaa
\notag
  T_{ t t}^{(\bf{g})}  = m_{\a t } {T^{(\bf{g})}}^{\a}_{\;\;\; t} = m_{t t } {T^{(\bf{g})}}^{t}_{\;\;\; t} = - {T^{(\bf{g})}}^{t}_{\;\;\; t} \; .
 \eeaa
 
We compute further,
 \bea\label{evlautingTttforourenergymomentumtensor}
\notag
&& {T^{(\bf{g})}}^{t}_{\;\;\; t} \\
\notag
 &=&  g^{t\a}< \derm_\a \Phi ,   \derm_t \Phi > - \frac{1}{2} m^{t}_{\;\;\; t}  \cdot g^{\a\b}  < \derm_\a \Phi ,   \derm_\b \Phi > \; \\
 \notag
&=&  g^{t\a}< \derm_\a \Phi ,   \derm_t \Phi > + \frac{1}{2} m_{t t}  \cdot \big( g^{t \b}  < \derm_t \Phi ,   \derm_\b \Phi > +g^{j \b}  < \derm_j \Phi ,   \derm_\b \Phi > \big)   \\
\notag
&=&  \frac{1}{2} g^{t\a}< \derm_\a \Phi ,   \derm_t \Phi > - \frac{1}{2} g^{j \b}  < \derm_j \Phi ,   \derm_\b \Phi >     \\
\notag
&=&  \frac{1}{2} g^{t t}< \derm_t \Phi ,   \derm_t \Phi >   + \frac{1}{2} g^{tj}< \derm_j \Phi ,   \derm_t \Phi >  - \frac{1}{2} g^{j t}  < \derm_j \Phi ,   \derm_t \Phi > \\
\notag
&&  - \frac{1}{2} g^{j i}  < \derm_j \Phi ,   \derm_i \Phi >   \; . \\
 \eea

Consequently, the conservation law \ref{conservationlawforvectorfieldt}, with the vector field $X= \frac{\pa}{ \pa t}$\,, gives the stated result.
 
 \end{proof}
 
 \begin{corollary}
 We have
    \bea
  \notag
   &&   \int_{\Sigma^{ext}_{t_2} } \Big(  - \frac{1}{2} g^{t t}< \derm_t \Phi_{V} ,   \derm_t \Phi_{V} >   + \frac{1}{2} g^{j i}  < \derm_j \Phi_{V} ,   \derm_i \Phi_{V} > \Big)    \cdot d^{n}x \\
   \notag
&& +  \int_{N_{t_1}^{t_2} }  \big( T_{\hat{L} t}^{(\bf{g})} \big)  \cdot dv^{(\bf{m})}_N \\
     \notag
   &=&   \int_{\Sigma^{ext}_{t_1} }   \Big(  - \frac{1}{2} g^{t t}< \derm_t \Phi_{V} ,   \derm_t \Phi_{V} >   + \frac{1}{2} g^{j i}  < \derm_j \Phi_{V} ,   \derm_i \Phi_{V} >  \Big)  \cdot d^{n}x  \\
     \notag
     &&- \int_{t_1}^{t_2}  \int_{\Sigma^{ext}_{\tau} } \Big(  < g^{\mu\a} \derm_{\mu } \derm_\a \Phi_{V} ,   \derm_t \Phi_{V} >  \\
       \notag
&& +  \frac{1}{2} \cdot   ( \derm_{t } g^{t \a} ) \cdot < \derm_\a \Phi_{V} ,   \derm_t \Phi_{V} > +( \derm_{j } g^{j \a} ) \cdot < \derm_\a \Phi_{V} ,   \derm_t \Phi_{V} > \\
 \notag
&& - \frac{1}{2} \cdot  ( \derm_{t } g^{j \b} ) \cdot < \derm_j \Phi_{V} ,   \derm_\b \Phi_{V} >     \Big) \cdot d\tau d^{n}x   \; .\\
 \eea
 
 \end{corollary}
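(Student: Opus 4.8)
The plan is to start from the conclusion of Lemma \ref{conservationlawwithoutcomputingdivergenceofTmut} and substitute into it the expression for the covariant divergence $\derm^{\mu} T_{\mu t}^{(\bf{g})}$ that was computed in the proof of Lemma \ref{generalconservationlawwithanyvectorfieldX}. Recall that there, working in wave coordinates (where the Christoffel symbols vanish so the Minkowski covariant derivatives commute), it was shown that
\bea
\notag
\derm^{\mu} T_{\mu\nu}^{(\bf{g})} &=& ( \derm_{\mu } g^{\mu\a} ) \cdot < \derm_\a \Phi ,   \derm_\nu \Phi > - \frac{1}{2} m^{\mu}_{\;\;\;\nu}  \cdot  ( \derm_{\mu } g^{\a\b} ) \cdot < \derm_\a \Phi ,   \derm_\b \Phi > \\
\notag
&& + < g^{\mu\a} \derm_{\mu } \derm_\a \Phi ,   \derm_\nu \Phi > \; ,
\eea
and since the right-hand side is manifestly tensorial in $\nu$, this identity is valid in the fixed system of coordinates $(t,x^1,\ldots,x^n)$ regardless of whether it is wave coordinates. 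The only step is then to set $\nu = t$ and expand the two contracted terms.

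First I would write $m^{\mu}_{\;\;\;t} = m^{\mu\la} m_{\la t} = m^{tt} m_{tt} \delta^{\mu}_{t}$; since $m$ is diagonal with $m_{tt} = -1$ and $m^{tt}=-1$, we get $m^{\mu}_{\;\;\;t} = \delta^{\mu}_{t}$, so the term $-\frac12 m^{\mu}_{\;\;\;t}(\derm_\mu g^{\a\b})<\derm_\a\Phi,\derm_\b\Phi>$ collapses to $-\frac12 (\derm_t g^{\a\b})<\derm_\a\Phi,\derm_\b\Phi>$. Next I would split the first term $(\derm_\mu g^{\mu\a})<\derm_\a\Phi,\derm_t\Phi>$ according to whether $\mu = t$ or $\mu = j$ (a spatial index), giving $(\derm_t g^{t\a})<\derm_\a\Phi,\derm_t\Phi> + (\derm_j g^{j\a})<\derm_\a\Phi,\derm_t\Phi>$. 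Finally, in the term $-\frac12(\derm_t g^{\a\b})<\derm_\a\Phi,\derm_\b\Phi>$, I would split the sum over $\a$ into $\a = t$ and $\a = j$; the $\a=t$ piece is $-\frac12(\derm_t g^{t\b})<\derm_t\Phi,\derm_\b\Phi>$, which combines with the $\mu=t$ piece $(\derm_t g^{t\a})<\derm_\a\Phi,\derm_t\Phi>$ coming from the first term (relabelling $\a\to\b$) to leave a net coefficient of $\frac12$; the $\a=j$ piece gives $-\frac12(\derm_t g^{j\b})<\derm_j\Phi,\derm_\b\Phi>$. Collecting these terms yields exactly the integrand $< g^{\mu\a}\derm_\mu\derm_\a\Phi_V,\derm_t\Phi_V> + \frac12(\derm_t g^{t\a})<\derm_\a\Phi_V,\derm_t\Phi_V> + (\derm_j g^{j\a})<\derm_\a\Phi_V,\derm_t\Phi_V> - \frac12(\derm_t g^{j\b})<\derm_j\Phi_V,\derm_\b\Phi_V>$ appearing in the statement. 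Substituting this into the space-time integral term of Lemma \ref{conservationlawwithoutcomputingdivergenceofTmut} gives the corollary.

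This proof is essentially a bookkeeping exercise and I do not anticipate a genuine obstacle; the one place to be careful is the index-splitting in the two contracted terms, making sure the $\a=t$ part of $-\frac12(\derm_t g^{\a\b})<\derm_\a\Phi,\derm_\b\Phi>$ is correctly merged with the $\mu=t$ part of $(\derm_\mu g^{\mu\a})<\derm_\a\Phi,\derm_t\Phi>$ (using the symmetry $g^{t\b}=g^{\b t}$ and a relabelling of the dummy index) so that the $\frac12$ coefficient in front of $(\derm_t g^{t\a})<\derm_\a\Phi_V,\derm_t\Phi_V>$ comes out right rather than $1$. All derivative commutations have already been justified in the proof of Lemma \ref{generalconservationlawwithanyvectorfieldX} via the vanishing of the wave-coordinate Christoffel symbols together with the tensoriality of the final expression, so nothing new is needed on that front.
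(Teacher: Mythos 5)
Your proposal is correct and follows essentially the same route as the paper: the paper likewise takes the divergence identity from Lemma \ref{generalconservationlawwithanyvectorfieldX}, sets $\nu=t$, uses $m^{\mu}_{\;\;\;t}=-m^{\mu t}$ to reduce the trace term to $-\frac12(\derm_t g^{\a\b})<\derm_\a\Phi,\derm_\b\Phi>$, splits both contractions into time and spatial parts, merges the two $\derm_t g^{t\a}$ pieces into the coefficient $\frac12$, and injects the result into Lemma \ref{conservationlawwithoutcomputingdivergenceofTmut}. No gaps.
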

 
 \begin{proof}
 We compute
  \bea\label{expressionofdivergenceofTmutwithoutdecompisinginaframe}
  \notag
&& \derm^{\mu }T_{\mu t}^{(\bf{g})}   \\
\notag
 &=& < g^{\mu\a} \derm_{\mu } \derm_\a \Phi ,   \derm_t \Phi >  +( \derm_{\mu } g^{\mu\a} ) \cdot < \derm_\a \Phi ,   \derm_t \Phi > \\
 \notag
&&- \frac{1}{2} m^{\mu}_{\;\;\; t}  \cdot  ( \derm_{\mu } g^{\a\b} ) \cdot < \derm_\a \Phi ,   \derm_\b \Phi >  \; .\\
 \eea

Since $ m^{\mu}_{\;\;\; t} = m^{\mu\a} \cdot m_{\a t}  =m^{\mu t} \cdot m_{t t} = - m^{\mu t} $, we compute further by decomposing the sum in wave coordinates,
\bea\label{evaluationofthecovariantdivergenceofourenergymomentumtensor}
\notag
&& \derm^{\mu }T_{\mu t}^{(\bf{g})}    \\
\notag
 &=& < g^{\mu\a} \derm_{\mu } \derm_\a \Phi ,   \derm_t \Phi >  \\
\notag
&& +( \derm_{t } g^{t \a} ) \cdot < \derm_\a \Phi ,   \derm_t \Phi > +( \derm_{j } g^{j \a} ) \cdot < \derm_\a \Phi ,   \derm_t \Phi > \\
 \notag
&&+ \frac{1}{2} m^{t t} \cdot  ( \derm_{t} g^{\a\b} ) \cdot < \derm_\a \Phi ,   \derm_\b \Phi >  \\ 
\notag
&=& < g^{\mu\a} \derm_{\mu } \derm_\a \Phi ,   \derm_t \Phi >  \\
\notag
&& +( \derm_{t } g^{t \a} ) \cdot < \derm_\a \Phi ,   \derm_t \Phi > +( \derm_{j } g^{j \a} ) \cdot < \derm_\a \Phi ,   \derm_t \Phi > \\
 \notag
&& - \frac{1}{2} \cdot  ( \derm_{t } g^{t\b} ) \cdot < \derm_t \Phi ,   \derm_\b \Phi > - \frac{1}{2} \cdot  ( \derm_{t } g^{j \b} ) \cdot < \derm_j \Phi ,   \derm_\b \Phi > \\ 
\notag
&=& < g^{\mu\a} \derm_{\mu } \derm_\a \Phi ,   \derm_t \Phi >  \\
\notag
&& +  \frac{1}{2} \cdot   ( \derm_{t } g^{t \a} ) \cdot < \derm_\a \Phi ,   \derm_t \Phi > +( \derm_{j } g^{j \a} ) \cdot < \derm_\a \Phi ,   \derm_t \Phi > \\
 \notag
&& - \frac{1}{2} \cdot  ( \derm_{t } g^{j \b} ) \cdot < \derm_j \Phi ,   \derm_\b \Phi > \, .\\
\eea

 Injecting in Lemma \ref{conservationlawwithoutcomputingdivergenceofTmut}, we obtain the desired result.

 \end{proof}

 \subsection{The weighted energy estimate in the exterior for $g^{\mu\nu} \derm_{\mu} \derm_{\nu} \Phi$ }\

\begin{definition}\label{definitionoftheparmeterq}
We define
\bea
q := r - t \; ,
\eea
where $t$ and $r$ are defined using the wave coordinates as explained in \cite{G4}.
\end{definition}

\begin{definition}\label{defoftheweightw}
We define
\beaa
w(q):=\begin{cases} 1+|q|)^{1+2\gamma} \quad\text{when }\quad q>0 , \\
         1 \,\quad\text{when }\quad q<0 . \end{cases}
\eeaa
for some $\gamma > 0$\,.

\end{definition}

\begin{definition}\label{defwidehatw}
We define $\widehat{w}$ by 
\beaa
\widehat{w}(q)&:=&\begin{cases} (1+|q|)^{1+2\gamma} \quad\text{when }\quad q>0 , \\
        (1+|q|)^{2\mu}  \,\quad\text{when }\quad q<0 , \end{cases} \\
        &=&\begin{cases} (1+ q)^{1+2\gamma} \quad\text{when }\quad q>0 , \\
      (1 - q)^{2\mu}  \,\quad\text{when }\quad q<0 ,\end{cases} 
\eeaa

for $\ga > 0$ and $\mu < 0$\,. Note that the factor $\mu \neq 0$ is constructed so that for $q<0$, the derivative $\frac{\pa \widehat{w}}{\pa q}$ is non-vanishing, so as to have a control on certain tangential derivatives, which is needed for the case $n=3$, which we will treat in a paper the follows (see Corollary \ref{TheenerhyestimatewithtermsinvolvingHandderivativeofHandwithwandhatw}) -- yet, we will not use this control here in the case of $n=4$. This being said, note that the definition of $\widehat{w}$\,, is also so that for $\ga \neq - \frac{1}{2} $ and $\mu \neq 0$ (which is assumed here), we would have 
\beaa
\widehat{w}^{\prime}(q) \sim \frac{\widehat{w}(q)}{(1+|q|)} \; ,
\eeaa
(see Lemma \ref{derivativeoftildwandrelationtotildew}) -- this is will determine the kind of control that we will have on the tangential derivatives, control that we will use in the next paper for space-dimension $n=3$. 

\end{definition}

\begin{remark}
We take $\mu < 0$ (instead of $\mu > 0$), because we want the derivative $\frac{\pa \widehat{w}}{\pa q} > 0$\,, as we will see that this is what we need in order to obtain an energy estimate on the fields (see Corollary \ref{TheenerhyestimatewithtermsinvolvingHandderivativeofHandwithwandhatw}). In other words, $\mu < 0$ is a necessary condition to ensure that $\widehat{w}^{\prime} (q)$ enters with the right sign in the energy estimate (see \eqref{estimatetoreferencetoexplainthedefinitionoftildewandhatw}).
\end{remark}

\begin{definition}\label{defwidetildew}
We define $\widetilde{w}$ by 
\beaa
\widetilde{w} ( q)&:=&  \widehat{w}(q) + w(q) \\
&:=&\begin{cases} 2 (1+|q|)^{1+2\gamma} \quad\text{when }\quad q>0 , \\
       1+  (1+|q|)^{2\mu}  \,\quad\text{when }\quad q<0 . \end{cases} \\    
\eeaa
Note that the definition of $\widetilde{w}$ is constructed so that Lemma \ref{equaivalenceoftildewandtildeandofderivativeoftildwandderivativeofhatw} holds, which we need in order to obtain \eqref{estimatetoreferencetoexplainthedefinitionoftildewandhatw}.
\end{definition}

\begin{lemma}\label{equaivalenceoftildewandtildeandofderivativeoftildwandderivativeofhatw}
We have
\beaa
\widetilde{w}^{\prime}  &\sim & \widehat{w}^{\prime } \; .
\eeaa
Furthermore, for $\mu < 0$\,, we have
\beaa
\widetilde{w} ( q)& \sim & w(q) \; .
\eeaa

\end{lemma}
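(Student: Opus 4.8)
The plan is to verify both asymptotic equivalences directly from the explicit piecewise definitions of $w$, $\widehat{w}$ and $\widetilde{w}=\widehat{w}+w$, treating the regions $q>0$ and $q<0$ separately, since the functions are smooth on each region and the only issue is matching constants up to multiplicative factors.

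For the first claim $\widetilde{w}^{\prime}\sim\widehat{w}^{\prime}$, since $\widetilde{w}=\widehat{w}+w$ we have $\widetilde{w}^{\prime}=\widehat{w}^{\prime}+w^{\prime}$, so it suffices to show $\widehat{w}^{\prime}+w^{\prime}\sim\widehat{w}^{\prime}$, which follows once we check that $w^{\prime}$ is controlled by $\widehat{w}^{\prime}$ and that they do not cancel. For $q>0$, one has $w(q)=(1+q)^{1+2\gamma}=\widehat{w}(q)$ (on this region the two weights literally coincide), hence $w^{\prime}=\widehat{w}^{\prime}$ and $\widetilde{w}^{\prime}=2\widehat{w}^{\prime}$, so the equivalence is immediate with constants $1$ and $2$, provided $\widehat{w}^{\prime}(q)=(1+2\gamma)(1+q)^{2\gamma}\neq 0$, which holds since $\gamma>0$. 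For $q<0$, $w\equiv 1$ so $w^{\prime}=0$ and $\widetilde{w}^{\prime}=\widehat{w}^{\prime}$ exactly. Thus the first equivalence is in fact an equality up to a constant in $\{1,2\}$ on each piece. (Here I am using the sign convention from Definition \ref{defwidehatw} and the accompanying remark, namely $\mu<0$, which guarantees $\widehat{w}^{\prime}(q)>0$ for $q<0$ as well, so there is no degeneracy; but even without that, the argument for this first claim only needs $\widehat{w}^{\prime}$ not to vanish identically against $w^{\prime}$, which is automatic.)

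For the second claim $\widetilde{w}(q)\sim w(q)$ when $\mu<0$: for $q>0$ we have $\widetilde{w}(q)=2(1+|q|)^{1+2\gamma}=2w(q)$, so the equivalence holds with constants $1$ and $2$. For $q<0$ we have $w(q)=1$ and $\widetilde{w}(q)=1+(1+|q|)^{2\mu}$; since $\mu<0$ and $|q|\geq 0$, we have $0<(1+|q|)^{2\mu}\leq 1$, hence $1\leq\widetilde{w}(q)\leq 2$, giving $w(q)\leq\widetilde{w}(q)\leq 2w(q)$. Combining the two regions, $w\leq\widetilde{w}\leq 2w$ everywhere, which is the asserted equivalence. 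The main point to be careful about is exactly this use of $\mu<0$: it is what makes $(1+|q|)^{2\mu}$ bounded on $q<0$, so that the $\widehat{w}$ contribution does not blow up relative to $w=1$ in the interior; if instead $\mu>0$ this would fail, which is consistent with the remark following Definition \ref{defwidehatw} explaining why $\mu<0$ is imposed.

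I do not expect any genuine obstacle here; the statement is essentially a bookkeeping check on piecewise power functions, and the only subtlety is to invoke the hypothesis $\mu<0$ at the right place (the $q<0$ region) and to note for the derivative statement that $\gamma>0$ keeps $\widehat{w}^{\prime}$ nonzero. One should also remark that on the region $q>0$ the exponent $1+2\gamma$ is the same for $w$ and $\widehat{w}$, which is why the positive-$q$ parts are trivially comparable; this is built into Definitions \ref{defoftheweightw} and \ref{defwidehatw}.
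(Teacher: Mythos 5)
Your proof is correct and follows essentially the same route as the paper's: compute $\widetilde{w}^{\prime}=\widehat{w}^{\prime}+w^{\prime}$ piecewise to get the factor $2$ for $q>0$ and the factor $1$ for $q<0$, and for the second claim use $\widehat{w}\geq 0$ for the lower bound and $\mu<0$ (so that $(1+|q|)^{2\mu}\leq 1$ on $q<0$) for the upper bound $\widetilde{w}\leq 2w$. The only difference is cosmetic: you make explicit that $w$ and $\widehat{w}$ coincide on $q>0$, which the paper uses implicitly.
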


\begin{proof}
We compute the derivative with respect to $q$\,,
\beaa
\widetilde{w}^{\prime} &=&   \widehat{w}^{\prime} ( q) + w^{\prime} ( q) \\
&=& \begin{cases} 2 \cdot \widehat{w}^{\prime} ( q) \quad\text{when }\quad q>0 , \\
        \widehat{w}^{\prime }( q)  \,\quad\text{when }\quad q<0 . \end{cases} \\ 
\eeaa
Consequently,
\beaa
\widetilde{w}^{\prime}  &\sim & \widehat{w}^{\prime } \; .
\eeaa

Now, on one hand, since $ \widehat{w} \geq 0$, we have 
\beaa
\widetilde{w} ( q)&\geq&w(q) \; .
\eeaa
On the other hand, since $\mu < 0$\,, we have
\beaa
\widetilde{w} ( q)& = & \begin{cases} 2 (1+|q|)^{1+2\gamma} \quad\text{when }\quad q>0 , \\
       1+  (1+|q|)^{2\mu}  \,\quad\text{when }\quad q<0 . \end{cases} \\    
       &\leq&\begin{cases} 2 (1+|q|)^{1+2\gamma} \quad\text{when }\quad q>0 , \\
       2  \,\quad\text{when }\quad q<0 . \end{cases} \\    
       &\leq& 2 w(q)
\eeaa
Thus
\beaa
\widetilde{w} ( q)& \sim & w(q) \; .
\eeaa
\end{proof}

\begin{lemma}\label{derivativeoftildwandrelationtotildew}
Let $\widehat{w}$ be defined as in Definition \ref{defwidehatw}.
We have, for $\ga \neq - \frac{1}{2} $ and $\mu \neq 0$,  
\beaa
\widehat{w}^{\prime}(q) \sim \frac{\widehat{w}(q)}{(1+|q|)} \; .
\eeaa

\end{lemma}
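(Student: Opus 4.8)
The plan is to verify the asymptotic equivalence $\widehat{w}^{\prime}(q) \sim \widehat{w}(q)/(1+|q|)$ by a direct case analysis on the sign of $q$, using the explicit formula for $\widehat{w}$ from Definition \ref{defwidehatw}. Recall that $\widehat{w}(q) = (1+q)^{1+2\gamma}$ for $q > 0$ and $\widehat{w}(q) = (1-q)^{2\mu}$ for $q < 0$, with $\gamma > 0$ and $\mu < 0$.

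First I would treat the region $q > 0$. Here $1 + |q| = 1 + q$, and differentiating gives $\widehat{w}^{\prime}(q) = (1+2\gamma)(1+q)^{2\gamma}$, while $\widehat{w}(q)/(1+|q|) = (1+q)^{1+2\gamma}/(1+q) = (1+q)^{2\gamma}$. Hence the ratio of the two quantities is the constant $1 + 2\gamma$, which is strictly positive and finite precisely because $\gamma \neq -\tfrac12$ (indeed $\gamma > 0$ here); this gives the equivalence on $q > 0$. Next, for $q < 0$, we have $1 + |q| = 1 - q$, and differentiating $\widehat{w}(q) = (1-q)^{2\mu}$ with respect to $q$ gives $\widehat{w}^{\prime}(q) = -2\mu (1-q)^{2\mu - 1} = (-2\mu)(1-q)^{2\mu}/(1-q) = (-2\mu)\,\widehat{w}(q)/(1+|q|)$. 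Since $\mu \neq 0$, the constant $-2\mu$ is nonzero; since $\mu < 0$, it is in fact positive, so again we obtain $\widehat{w}^{\prime}(q) \sim \widehat{w}(q)/(1+|q|)$ with an equivalence constant $-2\mu > 0$. Combining the two cases (and noting the estimate is vacuous or handled by continuity considerations at $q = 0$, which is a measure-zero point irrelevant to the $\sim$ relation used in the $L^2$ estimates) yields the claim, with an overall equivalence constant that can be taken to be $\max\{1+2\gamma,\,-2\mu\}$ on one side and $\min\{1+2\gamma,\,-2\mu\}$ on the other.

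There is essentially no obstacle here: the lemma is a bookkeeping computation whose only content is that the exponents $1 + 2\gamma$ and $2\mu$ in the definition of $\widehat{w}$ are chosen so that $1 + 2\gamma \neq 0$ and $2\mu \neq 0$, which is exactly the hypothesis $\gamma \neq -\tfrac12$ and $\mu \neq 0$. The one point to be slightly careful about is that the implied constants in $\sim$ are uniform in $q$ (they are, since they are literally the constants $1+2\gamma$ and $-2\mu$, independent of $q$), and that one need not worry about the non-differentiability of $\widehat{w}$ at $q = 0$ since $\widehat{w}$ is piecewise smooth and the relation is understood on each piece (this matches how $\widehat{w}^{\prime}$ is used in Corollary \ref{TheenerhyestimatewithtermsinvolvingHandderivativeofHandwithwandhatw} and Lemma \ref{equaivalenceoftildewandtildeandofderivativeoftildwandderivativeofhatw}).
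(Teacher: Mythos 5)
Your proof is correct and is essentially identical to the paper's: both differentiate $\widehat{w}$ separately on $q>0$ and $q<0$, obtain the exact constants $1+2\gamma$ and $-2\mu$, and conclude via the hypothesis that these are nonzero, with $\min\{1+2\gamma,-2\mu\}$ and $\max\{1+2\gamma,-2\mu\}$ as the equivalence constants. Your added remarks on the irrelevance of the point $q=0$ and the uniformity of the constants are harmless refinements of the same computation.
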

\begin{proof}

We have
\beaa
\widehat{w}(q) &=&\begin{cases} (1+ q)^{1+2\gamma} \quad\text{when }\quad q>0 , \\
        (1 - q)^{2\mu}  \,\quad\text{when }\quad q<0 . \end{cases} 
\eeaa
We compute,
\beaa
\widehat{w}^{\prime}(q)&=&\begin{cases} (1+2\gamma)(1+|q|)^{2\gamma} \quad\text{when }\quad q>0 , \\
         - 2\mu(1+|q|)^{2\mu-1} \,\quad\text{when }\quad q<0 . \end{cases} \\
          &=&\begin{cases}         (1+2\gamma)   \frac{w(q)}{(1+|q|)}   \quad\text{when }\quad q>0 , \\
         -  2\mu      \frac{w(q)}{(1+|q|)}    \,\quad\text{when }\quad q<0 . \end{cases} \\
\eeaa
Thus,
\beaa
\min \{ (1+2\gamma), -2\mu\}  \cdot \frac{\widehat{w}(q)}{(1+|q|)} 
 \leq \widehat{w}^{\prime}(q) \leq \max \{ (1+2\gamma), -2\mu\} \cdot \frac{\widehat{w}(q)}{(1+|q|)} \; ,
\eeaa
and hence, for  $\min \{ (1+2\gamma), -2\mu\}  \neq 0 $ and $\max \{ (1+2\gamma), -2\mu\} \neq 0$\,,  
\beaa
\widehat{w}^{\prime}(q) \sim \frac{\widehat{w}(q)}{(1+|q|)} \; .
\eeaa
\end{proof}

We now establish a conservation law with the weight $\widetilde{w}$. 

   \begin{lemma}\label{weightedconservationlawintheexteriorwiththeenergymomuntumtensorcontarctedwithvectordt}
We have
    \bea  \notag
  && \int_{N_{t_1}^{t_2} }  \big( T_{\hat{L} t}^{(\bf{g})} \big)  \cdot  \widetilde{w} (q) \cdot dv^{(\bf{m})}_N  +   \int_{\Sigma^{ext}_{t_2} }   T_{t t}^{(\bf{g})}  \cdot \widetilde{w} (q) \cdot d^{n}x \\
   \notag
     &=&   \int_{\Sigma^{ext}_{t_1} }  T_{ t t}^{(\bf{g})}   \cdot \widetilde{w} (q)  \cdot d^{n}x  -  \int_{t_1}^{t_2}  \int_{\Sigma^{ext}_{\tau} }  \Big( T_{t  t}^{(\bf{g})} +   T_{r  t}^{(\bf{g})} \Big) \cdot d\tau \cdot \widetilde{w} ^\prime (q)  \cdot  d^{n}x \\
     \notag
   &&  -  \int_{t_1}^{t_2}  \int_{\Sigma^{ext}_{\tau} } \Big(   \derm^{\mu }T_{\mu t}^{(\bf{g})}    \Big) \cdot d\tau \cdot \widetilde{w} (q) \cdot d^{n}x \; .  \\
 \eea
 
 \end{lemma}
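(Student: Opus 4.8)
The plan is to rerun the argument behind Lemma \ref{conservationlawwithoutcomputingdivergenceofTmut}, but applying the divergence theorem of Lemma \ref{generalconservationlawwithanyvectorfieldX} to the position-dependent vector field $X^{\nu} := \widetilde{w}(q)\,(\pa_t)^{\nu}$ instead of to $(\pa_t)^{\nu}$ itself. Since $q = r-t$ depends only on the wave coordinates $t$ and $r$, and since in these coordinates $m$ is the constant Minkowski metric (so its Christoffel symbols vanish and $\derm(\pa_t)=0$), the field $X$ differs from $\pa_t$ only through the scalar factor $\widetilde{w}(q)$, and the whole structure of the identity in Lemma \ref{conservationlawwithoutcomputingdivergenceofTmut} is reproduced, up to the single new term generated by $\derm^{\mu}X^{\nu}\neq 0$.

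First I would compute that new term, $\big(\derm^{\mu}X^{\nu}\big)T^{(\bf{g})}_{\mu\nu}$. Writing $\derm^{\mu}X^{\nu} = \big(\derm^{\mu}\widetilde{w}(q)\big)(\pa_t)^{\nu}$ and $\derm^{\mu}\widetilde{w}(q) = \widetilde{w}^{\prime}(q)\, m^{\mu\lambda}\pa_{\lambda}q$, and using $\pa_t q = -1$, $\pa_i q = x_i/r$, $m^{tt}=-1$, $m^{ij}=\de^{ij}$, one finds $\derm^{t}\widetilde{w}(q) = \widetilde{w}^{\prime}(q)$ and $\derm^{i}\widetilde{w}(q) = \widetilde{w}^{\prime}(q)\,x_i/r$; equivalently $\derm^{\mu}q$ is the outgoing null vector $\pa_t+\pa_r$. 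Contracting with $T^{(\bf{g})}_{\mu t}$ then gives $\big(\derm^{\mu}X^{\nu}\big)T^{(\bf{g})}_{\mu\nu} = \widetilde{w}^{\prime}(q)\big(T^{(\bf{g})}_{tt} + (x_i/r)\,T^{(\bf{g})}_{it}\big) = \widetilde{w}^{\prime}(q)\big(T^{(\bf{g})}_{tt} + T^{(\bf{g})}_{rt}\big)$, where $T^{(\bf{g})}_{rt}$ is the radial component $(x_i/r)\,T^{(\bf{g})}_{it}$.

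Next I would track the three boundary fluxes exactly as in the proof of Lemma \ref{conservationlawwithoutcomputingdivergenceofTmut}. On $\Sigma^{ext}_{t_1}$ and $\Sigma^{ext}_{t_2}$ one still has $n^{(\bf{m}),\nu}_{\Sigma}=(\pa_t)^{\nu}$ and $dv^{(\bf{m})}_{\Sigma}=d^{n}x$, so the flux of $X^{\mu}T^{(\bf{g})}_{\mu\nu}$ there is simply $\widetilde{w}(q)$ times the unweighted flux; reusing the computation of ${T^{(\bf{g})}}^{t}_{\;\;t}$ from that proof (the $g^{tj}$ cross terms cancel by symmetry of $g$, and $T^{(\bf{g})}_{tt} = -{T^{(\bf{g})}}^{t}_{\;\;t}$) this flux equals $\int_{\Sigma^{ext}_{t_i}}T^{(\bf{g})}_{tt}\,\widetilde{w}(q)\,d^{n}x$. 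On $N_{t_1}^{t_2}$ the flux is $\int_{N_{t_1}^{t_2}}T^{(\bf{g})}_{\hat{L}t}\,\widetilde{w}(q)\,dv^{(\bf{m})}_{N}$, the weight being carried along the boundary, with the orientation and normalization of $\hat{L}$ those fixed in Definition \ref{definitionofwidetildeLfordivergencetheoremuse}. The remaining interior piece is $X^{\nu}\derm^{\mu}T^{(\bf{g})}_{\mu\nu} = \widetilde{w}(q)\,\derm^{\mu}T^{(\bf{g})}_{\mu t}$. Substituting all of this into Lemma \ref{generalconservationlawwithanyvectorfieldX} and rearranging gives the stated identity.

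I do not expect a genuine obstacle here; the result is an exact identity and the only thing to handle with care is the sign bookkeeping of the three boundary fluxes against the orientation conventions already fixed in Definitions \ref{definitionofLtilde}--\ref{definitionofwidetildeLfordivergencetheoremuse} and in Lemma \ref{generalconservationlawwithanyvectorfieldX}. One should also note that, although $\widetilde{w}$ is only Lipschitz across the null cone $q=0$ (where $\widetilde{w}^{\prime}$ jumps while $\widetilde{w}$ itself stays continuous), this is enough for the divergence theorem to apply with $\widetilde{w}^{\prime}$ given by its classical piecewise expression, since $\widetilde{w}$ continuous and piecewise $C^{1}$ has no singular part in its distributional gradient.
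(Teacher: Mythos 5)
Your proposal is correct and follows essentially the same route as the paper: contract $T^{(\bf{g})}_{\mu\nu}$ with $X^\nu=\widetilde{w}(q)(\pa_t)^\nu$, compute $\derm^\mu X^\nu\cdot T^{(\bf{g})}_{\mu\nu}=\widetilde{w}'(q)\big(T^{(\bf{g})}_{tt}+T^{(\bf{g})}_{rt}\big)$ via $m^{\mu\a}\derm_\a q$, and carry the weight through the three boundary fluxes of Lemma \ref{generalconservationlawwithanyvectorfieldX}. Your closing remark on the Lipschitz regularity of $\widetilde{w}$ across $q=0$ is a correct additional observation not made explicit in the paper.
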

 
 \begin{proof}
 Considering again the Minkowski metric $m$ in the coordinates $\{t, x^1, \ldots, x^n \}$, and instead of contracting $T_{ \mu\nu}^{(\bf{g})}$  with $\frac{\pa}{\pa t} $ with respect to the second component $\nu$, we contract with the weighted vector
 \bea\label{The weightedvectorproportionaltodt}
 X = \widetilde{w} (q) \frac{\pa}{\pa t} \;.
 \eea
 By then, we have in the coordinates $\mu, \nu \in \{t, x^1, \ldots, x^n \}$, 
  \beaa
 \big( \derm^{\mu}  \big( \widetilde{w} (q) \frac{\pa}{\pa t}  \big)^{\nu} \big) &=& \widetilde{w} ^\prime(q) \cdot \derm^{\mu} (q) \cdot  \big( \frac{\pa}{\pa t}  \big)^{\nu} + \widetilde{w} q) \derm^{\mu}  \big( \frac{\pa}{\pa t}  \big)^{\nu} \\
&=&  \widetilde{w} ^\prime(q) \cdot m^{\mu\a} \derm_{\a} (q) \cdot  \big( \frac{\pa}{\pa t}  \big)^{\nu} \; .
 \eeaa
 
 For $\mu = t = x^0$, we have since $q = r-t$, 
 \beaa
m^{\mu\a} \derm_{\a} (q) = m^{tt} \derm_{t} (q) = - (-1) = 1 \; .
\eeaa

 For $\mu = x^j$, we have since $q = r-t$, 
  \beaa
m^{\mu\a} \derm_{\a} (q) = m^{jj} \derm_{j} (q) =  \frac{x^{j}}{r}\; .
\eeaa
Thus,
 \bea
 \notag
 \big( \derm^{\mu} \big( \widetilde{w} (q)  \frac{\pa}{\pa t}  \big)^{\nu} \big) \cdot T_{\mu\nu}^{(\bf{g})}   &=&  \Big( \widetilde{w} ^\prime(q) \cdot m^{\mu\a} \derm_{\a} (q) \cdot  \big( \frac{\pa}{\pa t}  \big)^{\nu} \Big) \cdot T_{\mu\nu}^{(\bf{g})} \\
  \notag
 &=&   \widetilde{w} ^\prime(q) \cdot T_{t  t}^{(\bf{g})} +  \widetilde{w} ^\prime(q) \cdot \frac{x^{j}}{r} \cdot T_{j  t}^{(\bf{g})} \\
 &=&   \widetilde{w} ^\prime(q) \cdot \Big( T_{t  t}^{(\bf{g})} +   T_{r  t}^{(\bf{g})} \Big) \; .
 \eea
We still have
 \beaa
n^{(\bf{m}), \nu}_{\Sigma}  &=&  \big( \frac{\pa}{\pa t}  \big)^{\nu} \; , \\
 dv^{(\bf{m})}_\Sigma &=& dx^1 \ldots dx^n  := d^{n} x \; \\
  n^{(\bf{m}), \nu}_{N} &=& \hat{L}^{\nu} \; .
 \eeaa
Consequently, the conservation law with the weighted vector $w(q) \frac{\pa}{\pa t}$ contracted with the second component of the non-symmetric tensor $T_{\mu\nu}^{(\bf{g})}$, gives the following equality
 \bea
 \notag
  && \int_{t_1}^{t_2}  \int_{\Sigma^{ext}_{\tau} }  \Big( T_{t  t}^{(\bf{g})} +   T_{r  t}^{(\bf{g})} \Big) \cdot d\tau \cdot \widetilde{w}^\prime (q)\cdot  d^{n}x +  \int_{t_1}^{t_2}  \int_{\Sigma^{ext}_{\tau} } \Big(   \derm^{\mu }T_{\mu t}^{(\bf{g})}    \Big) \cdot d\tau \cdot \widetilde{w}(q) \cdot  d^{n}x \\
   \notag
     &=&   \int_{\Sigma^{ext}_{t_1} }  T_{ t t}^{(\bf{g})}   \cdot \widetilde{w}(q)  \cdot d^{n}x -  \int_{\Sigma^{ext}_{t_2} (q)}   T_{t t}^{(\bf{g})}  \cdot \widetilde{w}(q) \cdot d^{n}x  \\
  \notag
 &&- \int_{N_{t_1}^{t_2} }  \big( T_{\hat{L} t}^{(\bf{g})} \big)  \cdot  \widetilde{w}(q) \cdot dv^{(\bf{m})}_N  \; .
 \eea

 \end{proof}

 \begin{corollary}\label{WeightedenergyestimateusingHandnosmallnessyet}
 We have
    \beaa  \notag
  && \int_{N_{t_1}^{t_2} }  \big( T_{\hat{L} t}^{(\bf{g})} \big)  \cdot  \widetilde{w}(q) \cdot dv^{(\bf{m})}_N  \\
  &&+   \int_{\Sigma^{ext}_{t_2} }  \big(    - \frac{1}{2} (m^{t t} + H^{tt} ) < \derm_t \Phi_{V} ,   \derm_t \Phi_{V} >   \\
  &&\quad\quad\quad \quad  + \frac{1}{2} (m^{j i} +H^{j i} )   < \derm_j \Phi_{V} ,   \derm_i \Phi_{V} > \big)  \cdot \widetilde{w}(q) \cdot d^{n}x \\
   \notag
     &=&   \int_{\Sigma^{ext}_{t_1} } \big(    - \frac{1}{2} (m^{t t} + H^{tt} ) < \derm_t \Phi_{V} ,   \derm_t \Phi_{V} >    \\
  &&\quad\quad\quad  + \frac{1}{2} (m^{j i} +H^{j i} )   < \derm_j \Phi_{V} ,   \derm_i \Phi_{V} > \big)   \cdot \widetilde{w}(q)  \cdot d^{n}x  \\
     && -  \int_{t_1}^{t_2}  \int_{\Sigma^{ext}_{\tau} }  \Big( T_{t  t}^{(\bf{g})} +   T_{r  t}^{(\bf{g})} \Big) \cdot d\tau \cdot \widetilde{w}^\prime (q) d^{n}x \\
     \notag
   &&  -  \int_{t_1}^{t_2}  \int_{\Sigma^{ext}_{\tau} } \Big(  < g^{\mu\a} \derm_{\mu } \derm_\a \Phi_{V} ,   \derm_t \Phi_{V} >  +( \derm_{\mu } H^{\mu\a} ) \cdot < \derm_\a \Phi_{V} ,   \derm_t \Phi_{V} > \\
 \notag
&&- \frac{1}{2} m^{\mu}_{\;\;\; t}  \cdot  ( \derm_{\mu } H^{\a\b} ) \cdot < \derm_\a \Phi_{V} ,   \derm_\b \Phi_{V} >   \Big) \cdot d\tau \cdot \widetilde{w}(q) d^{n}x \; .  \\
 \eeaa
 \end{corollary}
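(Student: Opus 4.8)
The plan is to combine the two previous results, namely Lemma~\ref{weightedconservationlawintheexteriorwiththeenergymomuntumtensorcontarctedwithvectordt} (the weighted conservation law with weight $\widetilde{w}$) and the explicit computations of $T_{tt}^{(\bf{g})}$ and $\derm^{\mu }T_{\mu t}^{(\bf{g})}$ carried out in Lemma~\ref{conservationlawwithoutcomputingdivergenceofTmut} and the subsequent Corollary. The strategy is purely substitutional: take the identity in Lemma~\ref{weightedconservationlawintheexteriorwiththeenergymomuntumtensorcontarctedwithvectordt}, and in each occurrence of $T_{tt}^{(\bf{g})}$ on the slices $\Sigma^{ext}_{t_1}$ and $\Sigma^{ext}_{t_2}$, replace $T_{tt}^{(\bf{g})}$ by the expression obtained from the computation of ${T^{(\bf{g})}}^{t}_{\;\;\;t}$ in \eqref{evlautingTttforourenergymomentumtensor} together with $T_{tt}^{(\bf{g})} = -{T^{(\bf{g})}}^{t}_{\;\;\;t}$. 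Then, in the space-time integral term $\int \derm^{\mu }T_{\mu t}^{(\bf{g})}\,d\tau\,\widetilde{w}(q)\,d^n x$, replace $\derm^{\mu }T_{\mu t}^{(\bf{g})}$ by its expression \eqref{expressionofdivergenceofTmutwithoutdecompisinginaframe}, i.e.\ $< g^{\mu\a}\derm_{\mu}\derm_{\a}\Phi_{V},\derm_t\Phi_{V}> + (\derm_{\mu}H^{\mu\a})\cdot<\derm_\a\Phi_{V},\derm_t\Phi_{V}> - \frac{1}{2}m^{\mu}_{\;\;\;t}\cdot(\derm_{\mu}H^{\a\b})\cdot<\derm_\a\Phi_{V},\derm_\b\Phi_{V}>$.

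First I would recall that, by the computation in the proof of Lemma~\ref{conservationlawwithoutcomputingdivergenceofTmut}, one has
\beaa
T_{tt}^{(\bf{g})} \;=\; -{T^{(\bf{g})}}^{t}_{\;\;\;t} \;=\; -\,\frac{1}{2} g^{tt}<\derm_t\Phi_{V},\derm_t\Phi_{V}> \;+\; \frac{1}{2} g^{ji}<\derm_j\Phi_{V},\derm_i\Phi_{V}>\;,
\eeaa
where the mixed $g^{tj}$ terms cancel exactly as in that proof. Now I substitute $g^{tt} = m^{tt}+H^{tt}$ and $g^{ji} = m^{ji}+H^{ji}$, which is precisely Definition~\ref{definitionofbigHandsmallhandrecallofdefinitionofMinkowskimetricminrelationtowavecoordiantesasreminder}; this produces the decomposed boundary integrands $-\frac{1}{2}(m^{tt}+H^{tt})<\derm_t\Phi_{V},\derm_t\Phi_{V}> + \frac{1}{2}(m^{ji}+H^{ji})<\derm_j\Phi_{V},\derm_i\Phi_{V}>$ displayed on the $\Sigma^{ext}_{t_1}$ and $\Sigma^{ext}_{t_2}$ slices in the statement. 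For the divergence term, I substitute $\derm_{\mu}g^{\mu\a} = \derm_{\mu}H^{\mu\a}$ and $\derm_{\mu}g^{\a\b} = \derm_{\mu}H^{\a\b}$ (again using $\derm m = 0$) into \eqref{expressionofdivergenceofTmutwithoutdecompisinginaframe}, which yields exactly the last space-time integral in the claimed corollary. The term $\int (T_{tt}^{(\bf{g})}+T_{rt}^{(\bf{g})})\cdot d\tau\cdot\widetilde{w}^\prime(q)\,d^n x$ is simply carried over unchanged from Lemma~\ref{weightedconservationlawintheexteriorwiththeenergymomuntumtensorcontarctedwithvectordt}, and the $N_{t_1}^{t_2}$ boundary term is likewise carried over verbatim.

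There is essentially no analytic obstacle here: the divergence theorem has already been established and weighted in Lemma~\ref{weightedconservationlawintheexteriorwiththeenergymomuntumtensorcontarctedwithvectordt}, and the algebraic identities for $T_{tt}^{(\bf{g})}$ and $\derm^{\mu}T_{\mu t}^{(\bf{g})}$ are already in hand from the two preceding results. The only point requiring a small amount of care — and the nearest thing to a ``hard part'' — is bookkeeping: one must make sure the mixed index terms $g^{tj}<\derm_j\Phi_{V},\derm_t\Phi_{V}>$ do indeed cancel after symmetrization (they do, for the same reason as in the proof of Lemma~\ref{conservationlawwithoutcomputingdivergenceofTmut}), and that the raising/lowering with $m$ is done consistently, in particular keeping track of the sign from $m^{tt}=m_{tt}=-1$ and $m^\mu_{\;\;\;t}=-m^{\mu t}$. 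Once that is checked, the result follows by direct substitution, and I would end the proof with ``Injecting these expressions into Lemma~\ref{weightedconservationlawintheexteriorwiththeenergymomuntumtensorcontarctedwithvectordt}, we obtain the stated identity.''
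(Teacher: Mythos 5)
Your proposal is correct and follows exactly the paper's own proof: the paper likewise evaluates $T_{tt}^{(\bf{g})}$ via \eqref{evlautingTttforourenergymomentumtensor} (with $g = m + H$) and $\derm^{\mu}T_{\mu t}^{(\bf{g})}$ via \eqref{expressionofdivergenceofTmutwithoutdecompisinginaframe}, then injects both into Lemma \ref{weightedconservationlawintheexteriorwiththeenergymomuntumtensorcontarctedwithvectordt}. Your bookkeeping remarks (cancellation of the mixed $g^{tj}$ terms and the sign conventions $m^{tt}=-1$, $m^{\mu}_{\;\;\;t}=-m^{\mu t}$) are consistent with the computations already established in the paper.
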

 
 \begin{proof}
We want to evaluate the terms in \eqref{weightedconservationlawintheexteriorwiththeenergymomuntumtensorcontarctedwithvectordt}. We have shown based on \eqref{evlautingTttforourenergymomentumtensor}, that
   \beaa
 \notag
  T_{ t t}^{(\bf{g})}  &=&  - \frac{1}{2} g^{t t}< \derm_t \Phi ,   \derm_t \Phi >   + \frac{1}{2} g^{j i}  < \derm_j \Phi ,   \derm_i \Phi >   \\
&=&    - \frac{1}{2} (m^{t t} + H^{tt} ) < \derm_t \Phi ,   \derm_t \Phi >   + \frac{1}{2} (m^{j i} +H^{j i} )   < \derm_j \Phi ,   \derm_i \Phi > \; ,
  \eeaa
and based on \eqref{expressionofdivergenceofTmutwithoutdecompisinginaframe}, that
 \beaa
  \notag
&& \derm^{\mu }T_{\mu t}^{(\bf{g})}   \\
\notag
 &=& < g^{\mu\a} \derm_{\mu } \derm_\a \Phi ,   \derm_t \Phi >  +( \derm_{\mu } H^{\mu\a} ) \cdot < \derm_\a \Phi ,   \derm_t \Phi > \\
 \notag
&&- \frac{1}{2} m^{\mu}_{\;\;\; t}  \cdot  ( \derm_{\mu } H^{\a\b} ) \cdot < \derm_\a \Phi ,   \derm_\b \Phi > \; . \\
\eeaa
Injecting these in Lemma \eqref{weightedconservationlawintheexteriorwiththeenergymomuntumtensorcontarctedwithvectordt}, we get the stated result.

  \end{proof}

Now, we would like to evaluate in Corollary \ref{WeightedenergyestimateusingHandnosmallnessyet} the term with the weight $w^\prime (q)$\,.

 \begin{lemma}\label{expressionofTttplusTtr}
 We have
    \bea
   \notag
 && T_{ t t}^{(\bf{g})}  +  T_{ r t}^{(\bf{g})} \\
    \notag
 &=&  \frac{1}{2} \Big(  | \derm_t  \Phi_{V}  + \derm_r \Phi_{V}  |^2  +  \de^{ij}  | \derm_i - \frac{x_i}{r} \derm_{r}  )\Phi_{V}  |^2 \Big)   \\
     \notag
 &&  - \frac{1}{2} H^{t t}< \derm_t \Phi_{V}  ,   \derm_t \Phi_{V}  >   + \frac{1}{2} H^{j i}  < \derm_j \Phi_{V}  ,   \derm_i \Phi_{V}  >  \\
    \notag
&&  + H^{r t}< \derm_t \Phi_{V}  ,   \derm_t \Phi_{V}  > + H^{r j}< \derm_j \Phi_{V}  ,   \derm_t \Phi_{V}  >  \; . \\
  \eea
  
 \end{lemma}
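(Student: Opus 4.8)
The plan is to compute $T^{(\bf{g})}_{tt}$ and $T^{(\bf{g})}_{rt}$ directly from the definition of the tensor $T^{(\bf{g})}$ in Definition \ref{defofthestreessenergymomentumtensorforwaveequationhere}, split the metric $g = m + H$ everywhere, and reorganize the leading (Minkowski) part into the null-frame form $\frac12\big(|\derm_t\Phi_V + \derm_r\Phi_V|^2 + \de^{ij}|(\derm_i - \frac{x_i}{r}\derm_r)\Phi_V|^2\big)$. For $T^{(\bf{g})}_{tt}$ I will simply quote the expression already obtained in \eqref{evlautingTttforourenergymomentumtensor}, namely $T^{(\bf{g})}_{tt} = -\frac12 g^{tt}\langle\derm_t\Phi,\derm_t\Phi\rangle + \frac12 g^{ji}\langle\derm_j\Phi,\derm_i\Phi\rangle$, and then write $g^{tt} = m^{tt} + H^{tt} = -1 + H^{tt}$ and $g^{ji} = m^{ji} + H^{ji} = \de^{ji} + H^{ji}$, so that the Minkowski part of $T^{(\bf{g})}_{tt}$ is $\frac12\langle\derm_t\Phi,\derm_t\Phi\rangle + \frac12\de^{ij}\langle\derm_i\Phi,\derm_j\Phi\rangle = \frac12(|\derm_t\Phi|^2 + \sum_i|\derm_i\Phi|^2)$, with the $H$-contribution $-\frac12 H^{tt}\langle\derm_t\Phi,\derm_t\Phi\rangle + \frac12 H^{ji}\langle\derm_j\Phi,\derm_i\Phi\rangle$ exactly as in the statement.

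Next I would compute $T^{(\bf{g})}_{rt}$. By definition $T^{(\bf{g})}_{rt} := \frac{x^j}{r} T^{(\bf{g})}_{jt}$, where $T^{(\bf{g})}_{jt} = m_{j\a}{T^{(\bf{g})}}^{\a}_{\ \ t} = {T^{(\bf{g})}}^{j}_{\ \ t}$ (since $m_{j\a} = \de_{j\a}$ on spatial indices). From the definition, ${T^{(\bf{g})}}^{j}_{\ \ t} = g^{j\a}\langle\derm_\a\Phi,\derm_t\Phi\rangle - \frac12 m^{j}_{\ \ t} g^{\a\b}\langle\derm_\a\Phi,\derm_\b\Phi\rangle$, and $m^{j}_{\ \ t} = m^{j\a}m_{\a t} = m^{jt}m_{tt} = 0$, so the trace term drops and ${T^{(\bf{g})}}^{j}_{\ \ t} = g^{jt}\langle\derm_t\Phi,\derm_t\Phi\rangle + g^{ji}\langle\derm_i\Phi,\derm_t\Phi\rangle$. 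Contracting with $\frac{x^j}{r}$ and splitting $g = m + H$: the Minkowski part $m^{jt} = 0$ kills the first term's leading piece, leaving $H^{rt}\langle\derm_t\Phi,\derm_t\Phi\rangle$ from it, and $m^{ji}\frac{x^j}{r}\langle\derm_i\Phi,\derm_t\Phi\rangle = \frac{x^i}{r}\langle\derm_i\Phi,\derm_t\Phi\rangle = \langle\derm_r\Phi,\derm_t\Phi\rangle$ from the second, with the $H$-remainder $H^{rj}\langle\derm_j\Phi,\derm_t\Phi\rangle$. So the Minkowski part of $T^{(\bf{g})}_{tt} + T^{(\bf{g})}_{rt}$ is $\frac12|\derm_t\Phi|^2 + \frac12\sum_i|\derm_i\Phi|^2 + \langle\derm_r\Phi,\derm_t\Phi\rangle$, and all the $H$-terms combine to exactly those displayed in the statement.

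The only real content is the algebraic identity $\frac12|\derm_t\Phi|^2 + \frac12\sum_i|\derm_i\Phi|^2 + \langle\derm_r\Phi,\derm_t\Phi\rangle = \frac12|\derm_t\Phi + \derm_r\Phi|^2 + \frac12\de^{ij}\langle(\derm_i - \frac{x_i}{r}\derm_r)\Phi, (\derm_j - \frac{x_j}{r}\derm_r)\Phi\rangle$. This follows from expanding the right-hand side: $\frac12|\derm_t\Phi + \derm_r\Phi|^2 = \frac12|\derm_t\Phi|^2 + \langle\derm_t\Phi,\derm_r\Phi\rangle + \frac12|\derm_r\Phi|^2$, while the tangential-derivative term expands as $\frac12\sum_i|\derm_i\Phi|^2 - \frac{x^i}{r}\langle\derm_i\Phi,\derm_r\Phi\rangle + \frac12\frac{|x|^2}{r^2}|\derm_r\Phi|^2 = \frac12\sum_i|\derm_i\Phi|^2 - \langle\derm_r\Phi,\derm_r\Phi\rangle + \frac12|\derm_r\Phi|^2$, using $\derm_r\Phi = \frac{x^i}{r}\derm_i\Phi$ and $\frac{|x|^2}{r^2} = 1$; adding these two lines, the $\pm\frac12|\derm_r\Phi|^2$ and the $+\frac12|\derm_r\Phi|^2 - |\derm_r\Phi|^2$ pieces collapse and one recovers the left-hand side. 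The main (minor) obstacle is bookkeeping: being consistent about which indices are raised with $m$ versus which geometric quantities carry $g$, and tracking that $H^{rt} := \frac{x^j}{r}H^{jt}$, $H^{rj}$ likewise, so that the $H$-terms land in precisely the stated form; there is no analytic difficulty here.
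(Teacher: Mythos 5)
Your proposal is correct and follows essentially the same route as the paper: quote the earlier computation of $T^{(\bf{g})}_{tt}$, compute $T^{(\bf{g})}_{rt}=g^{rt}\langle\derm_t\Phi,\derm_t\Phi\rangle+g^{rj}\langle\derm_j\Phi,\derm_t\Phi\rangle$ using that the trace term vanishes since $m^{r}_{\;\;t}=0$, split $g=m+H$, and verify the null-frame identity $\frac12|\derm\Phi|^2+\langle\derm_r\Phi,\derm_t\Phi\rangle=\frac12|\derm_t\Phi+\derm_r\Phi|^2+\frac12\de^{ij}\langle(\derm_i-\frac{x_i}{r}\derm_r)\Phi,(\derm_j-\frac{x_j}{r}\derm_r)\Phi\rangle$ by direct expansion. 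The only cosmetic difference is that you obtain the $r$-component by contracting the spatial index with $x^j/r$ whereas the paper writes $T_{rt}=m_{rr}{T^{(\bf{g})}}^{r}_{\;\;t}$, which amounts to the same thing.
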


   \begin{proof}
   
 We compute
  \beaa
T_{ r t}^{(\bf{g})} = m_{rr} \cdot {T^{(\bf{g})}}^{r}_{\;\;\; t} = {T^{(\bf{g})}}^{r}_{\;\;\; t}  \; .
 \eeaa

\bea
\notag
 {T^{(\bf{g})}}^{r}_{\;\;\; t}   &=&  g^{r\a}< \derm_\a \Phi ,   \derm_t \Phi > - \frac{1}{2} m^{r}_{\;\;\; t}  \cdot g^{\a\b}  < \derm_\a \Phi ,   \derm_\b \Phi >  \\
&=&  g^{r\a}< \derm_\a \Phi ,   \derm_t \Phi > \; .
 \eea

Thus,
\bea
\notag
 T_{ r t}^{(\bf{g})} =  g^{r t}< \derm_t \Phi ,   \derm_t \Phi > + g^{r j}< \derm_j \Phi ,   \derm_t \Phi >   \; . \\
 \eea

  Consequently,
      \beaa
      \notag
 && T_{ t t}^{(\bf{g})}  +  T_{ r t}^{(\bf{g})} \\
       \notag
&=& - \frac{1}{2} g^{t t}< \derm_t \Phi ,   \derm_t \Phi >   + \frac{1}{2} g^{j i}  < \derm_j \Phi ,   \derm_i \Phi >  \\
&&  + g^{r t}< \derm_t \Phi ,   \derm_t \Phi > + g^{r j}< \derm_j \Phi ,   \derm_t \Phi >  \\
&=& - \frac{1}{2} m^{t t}< \derm_t \Phi ,   \derm_t \Phi >   + \frac{1}{2} m^{j i}  < \derm_j \Phi ,   \derm_i \Phi >  \\
&& + m^{r t}< \derm_t \Phi ,   \derm_t \Phi > + m^{r j}< \derm_j \Phi ,   \derm_t \Phi >  \\
&& - \frac{1}{2} H^{t t}< \derm_t \Phi ,   \derm_t \Phi >   + \frac{1}{2} H^{j i}  < \derm_j \Phi ,   \derm_i \Phi >  \\
&&  + H^{r t}< \derm_t \Phi ,   \derm_t \Phi > + H^{r j}< \derm_j \Phi ,   \derm_t \Phi >   \; .
  \eeaa

 Thus,
       \beaa
      \notag
 && T_{ t t}^{(\bf{g})}  +  T_{r t}^{(\bf{g})} \\
       \notag
&=&  \frac{1}{2} < \derm_t \Phi ,   \derm_t \Phi >   + \frac{1}{2} \de^{j i}  < \derm_j \Phi ,   \derm_i \Phi >   + m^{r j}< \derm_j \Phi ,   \derm_t \Phi >  \\
&& - \frac{1}{2} H^{t t}< \derm_t \Phi ,   \derm_t \Phi >   + \frac{1}{2} H^{j i}  < \derm_j \Phi ,   \derm_i \Phi >  \\
&&  + H^{r t}< \derm_t \Phi ,   \derm_t \Phi > + H^{r j}< \derm_j \Phi ,   \derm_t \Phi > \; .
\eeaa
 Yet, 
  \beaa
m^{r j} = m^{rr} m^{ij} m_{r i} = m_{r j} = \frac{x^j}{r} 
  \eeaa
   Therefore, 
   \beaa
   m^{r j}< \derm_j \Phi ,   \derm_t \Phi > &=& \frac{x^j}{r} < \derm_j \Phi ,   \derm_t \Phi > = < \derm_r \Phi ,   \derm_t \Phi > \; .
   \eeaa
   
Therefore, we have
\bea
\notag
 T_{ t t}^{(\bf{g})}  +  T_{  r t}^{(\bf{g})} &=&  \frac{1}{2} | \derm \Phi |^2  + < \derm_r \Phi ,   \derm_t \Phi >  \\
 \notag
&& - \frac{1}{2} H^{t t}< \derm_t \Phi ,   \derm_t \Phi >   + \frac{1}{2} H^{j i}  < \derm_j \Phi ,   \derm_i \Phi >  \\
\notag
&&  + H^{r t}< \derm_t \Phi ,   \derm_t \Phi > + H^{r j}< \derm_j \Phi ,   \derm_t \Phi > \; . \\
  \eea

However, we have shown that for a scalar, we have $ \de^{ij}  < \pa_i \Phi , \pa_j \Phi > = \de^{ij}  < ( \pa_i - \frac{x_i}{r} \pa_{r}  )\Phi , (\pa_j - \frac{x_j}{r} \pa_{r} ) \Phi > +  < \pa_{r}  \Phi, \pa_{r}  \Phi > $. However, since $\derm$ is the Minkowski covariant derivative, computing the trace with respect to wave coordinates $\{t, x^1, \ldots, x^n \}$, we get
\bea
\notag
 && \de^{ij}  < \derm_i \Phi , \derm_j \Phi >  \\
 \notag
 &=& \de^{ij}  < ( \derm_i - \frac{x_i}{r} \derm_{r}  )\Phi , (\derm_j - \frac{x_j}{r} \derm_{r} ) \Phi > +  < \derm_{r}  \Phi, \derm_{r}  \Phi > \; . \\
\eea

 Hence,
 
 \bea
\notag
&&  < \derm_t  \Phi + \derm_r \Phi, \derm_t  \Phi + \derm_r \Phi >     +  \de^{ij}  < ( \derm_i - \frac{x_i}{r} \derm_{r}  )\Phi , (\derm_j - \frac{x_j}{r} \derm_{r} ) \Phi >  \\
\notag
 &=& < \derm_t  \Phi, \derm_t  \Phi >  + < \derm_r  \Phi, \derm_r  \Phi >  + 2  < \derm_t  \Phi, \derm_r  \Phi > \\
 \notag
 && + \de^{ij}  < \derm_i \Phi , \derm_j \Phi > -  < \derm_{r}  \Phi, \derm_{r}  \Phi > \\
 \notag
 &=&  < \derm_t  \Phi, \derm_t  \Phi >   + 2  < \derm_t  \Phi, \derm_r  \Phi > + \de^{ij}  < \derm_i \Phi , \derm_j \Phi > \\
 &=& |\derm \Phi |^2  + 2  < \derm_t  \Phi, \derm_r  \Phi > \; .
   \eea
   
   Therefore,
   \beaa
   \notag
 && T_{ t t}^{(\bf{g})}  +  T_{ r t}^{(\bf{g})} \\
    \notag
  &=&  \frac{1}{2} \Big(  < \derm_t  \Phi + \derm_r \Phi, \derm_t  \Phi + \derm_r \Phi >    \\
     \notag
  && +  \de^{ij}  < ( \derm_i - \frac{x_i}{r} \derm_{r}  )\Phi , (\derm_j - \frac{x_j}{r} \derm_{r} ) \Phi > \Big)   \\
     \notag
 &&  - \frac{1}{2} H^{t t}< \derm_t \Phi ,   \derm_t \Phi >   + \frac{1}{2} H^{j i}  < \derm_j \Phi ,   \derm_i \Phi >  \\
    \notag
&&  + H^{r t}< \derm_t \Phi ,   \derm_t \Phi > + H^{r j}< \derm_j \Phi ,   \derm_t \Phi > \; . \\
  \eeaa
  
  \end{proof}

  We recapitulate the following lemma that he fact that we proved in \cite{G4}.
  
  \begin{lemma}\label{howtogetthedesirednormintheexpressionofenergyestimate}
Assume that the perturbation of the Minkowski metric is such that $ H^{\mu\nu} = g^{\mu\nu}-m^{\mu\nu}$ is bounded by a constant $C < \frac{1}{n}$\;, where $n$ is the space dimension, i.e.
\bea\label{AssumptiononHforgettingthenormintheexpressionofenergyestimate}
| H| \leq C < \frac{1}{n} \; .
\eea
Then we have
 \beaa
 | \derm \Phi_{V}  |^2 \sim  - (m^{t t} + H^{t t} ) < \derm_t \Phi_{V} , \derm_t \Phi_{V}  > +   (m^{ij} + H^{ij} ) < \derm_i \Phi_{V}  , \derm_j \Phi_{V}  > \; ,
  \eeaa
where the scalar product of the partial derivatives is as in Definition \ref{definitionofthescalarproductoftwopartialderivatives}.

\end{lemma}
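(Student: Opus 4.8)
The plan is to reduce the claimed equivalence to an elementary linear-algebra estimate: the quadratic form $Q(\xi) := -(m^{tt}+H^{tt})\,\xi_t^2 + (m^{ij}+H^{ij})\,\xi_i\xi_j$ (acting component-wise, then summed over the tensorial indices of $\Phi_V$ and over the derivative index $\mu$) is uniformly equivalent to the Euclidean form $|\xi|^2 = \xi_t^2 + \delta^{ij}\xi_i\xi_j$, with constants depending only on the bound $C$ in \eqref{AssumptiononHforgettingthenormintheexpressionofenergyestimate}. Since $m^{tt} = -1$ and $m^{ij} = \delta^{ij}$, the "Minkowski part" $-m^{tt}\xi_t^2 + m^{ij}\xi_i\xi_j$ is exactly $|\xi|^2$, so the whole point is that the $H$-contributions are a small perturbation.

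First I would write
\beaa
Q(\xi) = |\xi|^2 - H^{tt}\xi_t^2 + H^{ij}\xi_i\xi_j \; ,
\eeaa
and estimate the error term by Cauchy--Schwarz together with the hypothesis $\sum_{\mu,\nu=0}^{n}|H_{\mu\nu}| \le C < \frac1n$ (equivalently the bound on $|H|$, the two being comparable since raising indices with $m$ only changes signs): each $|H^{tt}\xi_t^2| \le |H^{tt}|\,|\xi|^2$ and $|H^{ij}\xi_i\xi_j| \le \big(\sum_{i,j}|H^{ij}|\big)\,|\xi|^2$, so
\beaa
\big| -H^{tt}\xi_t^2 + H^{ij}\xi_i\xi_j \big| \;\le\; \Big(\sum_{\mu,\nu=0}^{n}|H^{\mu\nu}|\Big)\,|\xi|^2 \;\le\; C\,|\xi|^2 \; .
\eeaa
(One could even be more economical and not use all components of $H$, but the crude bound suffices.) This immediately gives the two-sided bound
\beaa
(1-C)\,|\xi|^2 \;\le\; Q(\xi) \;\le\; (1+C)\,|\xi|^2 \; ,
\eeaa
and since $C < \frac1n \le 1$ the lower constant $1-C$ is strictly positive, so $Q(\xi) \sim |\xi|^2$ with implicit constants $1-C$ and $1+C$.

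Finally I would apply this pointwise with $\xi = \derm_\mu \Phi_V$ for each fixed derivative index $\mu$ and each tensorial slot, and sum: by the definition of the norm $|\derm \Phi_V|^2 = \sum_{\mu}\sum_{(\text{indices})} |\derm_\mu \Phi_V|^2$ and the bilinearity of the inner product $<\cdot,\cdot>$ used in Definition \ref{defofthestreessenergymomentumtensorforwaveequationhere}, summing the component-wise inequalities yields exactly
\beaa
(1-C)\,|\derm \Phi_V|^2 \;\le\; -(m^{tt}+H^{tt})<\derm_t\Phi_V,\derm_t\Phi_V> + (m^{ij}+H^{ij})<\derm_i\Phi_V,\derm_j\Phi_V> \;\le\; (1+C)\,|\derm\Phi_V|^2 \; ,
\eeaa
which is the assertion. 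The only mild subtlety — and the one place to be slightly careful rather than a genuine obstacle — is bookkeeping between $H_{\mu\nu}$, $H^{\mu\nu}$ and the mixed form $H^\mu_{\ \nu}$: raising indices with the flat metric $m = \mathrm{diag}(-1,+1,\dots,+1)$ permutes these and flips some signs but leaves all the absolute values $|H^{\mu\nu}|$, $|H_{\mu\nu}|$ comparable (indeed equal up to reindexing), so the hypothesis \eqref{AssumptiononHforgettingthenormintheexpressionofenergyestimate} transfers directly to whichever version appears in the quadratic form. No smallness beyond $C<\frac1n$ is needed; in fact $C<1$ would do for this particular lemma, the sharper threshold $\frac1n$ being dictated by other uses in the paper.
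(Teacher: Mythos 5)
Your argument is correct: writing the right-hand side as $|\derm\Phi_V|^2$ plus the error $-H^{tt}<\derm_t\Phi_V,\derm_t\Phi_V>+H^{ij}<\derm_i\Phi_V,\derm_j\Phi_V>$, bounding the error by $C\,|\derm\Phi_V|^2$ via Cauchy--Schwarz, and concluding $(1-C)|\derm\Phi_V|^2\le Q\le(1+C)|\derm\Phi_V|^2$ with $1-C>0$ is exactly the standard perturbation-of-a-positive-quadratic-form argument this lemma calls for. Note that the paper does not actually prove the lemma here --- it only ``recapitulates'' it from \cite{G4} --- so your write-up supplies the omitted elementary proof rather than deviating from one given in the text. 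The one point worth tightening is the norm convention: the paper's Definition of $|K|$ is the $\ell^2$ norm of the components, while the introduction's hypothesis \eqref{boundednessoftheperturbationbigHbyaconstant} is an $\ell^1$ bound; your estimate goes through under either reading (with $\ell^2$ one uses $|H^{ij}\xi_i\xi_j|\le\big(\sum_{i,j}|H^{ij}|^2\big)^{1/2}\sum_i\xi_i^2$), so this is cosmetic, but you should fix one convention rather than assert the two are interchangeable with constant one. Your closing observation that $C<1$ already suffices here, the threshold $\frac1n$ being needed elsewhere, is accurate.
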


 \begin{lemma}\label{TheenerhyestimatewithtermsinvolvingHandderivativeofH}
 For $ H^{\mu\nu} = g^{\mu\nu}-m^{\mu\nu}$ satisfying 
\bea
| H| < \frac{1}{n}\; ,
\eea
where $n$ is the space dimension, and for $\Phi$ decaying sufficiently fast at spatial infinity, we have
   \beaa
 &&     \int_{\Sigma^{ext}_{t_2} }  |\derm \Phi_{V}  |^2     \cdot \widetilde{w}(q)  \cdot d^{n}x   \\
 \notag
 &&+ \int_{t_1}^{t_2}  \int_{\Sigma^{ext}_{\tau} }  \Big(    \frac{1}{2} \Big(  | \derm_t  \Phi_{V}  + \derm_r \Phi_{V}  |^2  +  \de^{ij}  | ( \derm_i - \frac{x_i}{r} \derm_{r}  )\Phi_{V}  |^2 \Big)  \cdot d\tau \cdot \widetilde{w}^\prime (q) d^{n}x \\
  \notag
   &\les &       \int_{\Sigma^{ext}_{t_1} }  |\derm \Phi_{V}  |^2     \cdot \widetilde{w}(q)  \cdot d^{n}x   + \int_{t_1}^{t_2}  \int_{\Sigma^{ext}_{\tau} }    | H |  \cdot  | \derm \Phi_{V}  |^2     \cdot \widetilde{w}^\prime (q) \cdot d^{n}x \cdot d\tau\\
     \notag
     &&+ \int_{t_1}^{t_2}  \int_{\Sigma^{ext}_{\tau} }  \Big(   | g^{\mu\a} \derm_{\mu } \derm_\a \Phi_{V}  | \cdot |  \derm \Phi_{V}  |  + | \derm H  | \cdot | \derm \Phi_{V}  |^2       \Big)    \cdot \widetilde{w}(q) \cdot d^{n}x \cdot d\tau  \; .\\
 \eeaa

 \end{lemma}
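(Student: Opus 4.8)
The plan is to obtain the inequality directly from the exact weighted conservation identity of Corollary \ref{WeightedenergyestimateusingHandnosmallnessyet} by rewriting each of its five terms. That identity equates (flux through $N$) plus ($t_2$-slice term) with ($t_1$-slice term) minus (a $\widetilde{w}^\prime$ space--time integral built from $T^{(\bf{g})}_{tt}+T^{(\bf{g})}_{rt}$) minus (a $\widetilde{w}$ space--time integral built from $g^{\mu\a}\derm_\mu\derm_\a\Phi_{V}$ and $\derm H$). Three ingredients turn it into the claimed estimate: the positivity built into $\widetilde{L}$, the positivity of $\widetilde{w}^\prime$, and the norm-comparison of Lemma \ref{howtogetthedesirednormintheexpressionofenergyestimate}.

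First I would dispose of the flux through $N$. By Definition \ref{definitionofwidetildeLfordivergencetheoremuse} the vector $\hat{L}$ is a positive multiple of $\widetilde{L}$, and $T^{(\bf{g})}_{\widetilde{L}t}\ge 0$ by Definition \ref{definitionofLtilde}; since $\widetilde{w}\ge 0$, this term on the left is nonnegative and may be discarded, so the identity becomes an inequality whose left-hand side is the $t_2$-slice term alone. Next, on both the $t_1$- and the $t_2$-slice the integrand is exactly $\tfrac12\big(-(m^{tt}+H^{tt})\langle\derm_t\Phi_{V},\derm_t\Phi_{V}\rangle+(m^{ij}+H^{ij})\langle\derm_i\Phi_{V},\derm_j\Phi_{V}\rangle\big)$, which by Lemma \ref{howtogetthedesirednormintheexpressionofenergyestimate} (here is where $|H|<\frac{1}{n}$ is used) is comparable to $|\derm\Phi_{V}|^2$; this replaces the $t_2$-term on the left by a quantity $\gtrsim\int_{\Sigma^{ext}_{t_2}}|\derm\Phi_{V}|^2\widetilde{w}$ and the $t_1$-term on the right by a quantity $\les\int_{\Sigma^{ext}_{t_1}}|\derm\Phi_{V}|^2\widetilde{w}$.

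Then I would treat the $\widetilde{w}^\prime$ space--time integral via the decomposition of Lemma \ref{expressionofTttplusTtr}: $T^{(\bf{g})}_{tt}+T^{(\bf{g})}_{rt}$ equals $\tfrac12\big(|\derm_t\Phi_{V}+\derm_r\Phi_{V}|^2+\de^{ij}|(\derm_i-\tfrac{x_i}{r}\derm_r)\Phi_{V}|^2\big)$ plus terms of the form $H^{\bullet\bullet}\langle\derm\Phi_{V},\derm\Phi_{V}\rangle$, each $\les|H|\,|\derm\Phi_{V}|^2$. Moving the sign-definite quadratic part to the left produces exactly the second term of the asserted left-hand side; this requires $\widetilde{w}^\prime\ge 0$, which follows from $\widetilde{w}^\prime\sim\widehat{w}^\prime$ (Lemma \ref{equaivalenceoftildewandtildeandofderivativeoftildwandderivativeofhatw}) together with $\widehat{w}^\prime>0$, the latter being precisely why $\mu<0$ is imposed in Definition \ref{defwidehatw}. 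The $H$-terms remain on the right and give the $\int\!\int|H|\,|\derm\Phi_{V}|^2\widetilde{w}^\prime$ contribution. Finally, in the last space--time integral Cauchy--Schwarz gives $|\langle g^{\mu\a}\derm_\mu\derm_\a\Phi_{V},\derm_t\Phi_{V}\rangle|\le|g^{\mu\a}\derm_\mu\derm_\a\Phi_{V}|\,|\derm\Phi_{V}|$, and the two terms containing $\derm_\mu H^{\mu\a}$ and $m^{\mu}_{\;\;t}\derm_\mu H^{\a\b}$ are bounded by $\les|\derm H|\,|\derm\Phi_{V}|^2$ (the $m$-coefficients contributing only bounded factors), yielding the final term on the right. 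Assembling these four steps gives the statement.

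I do not anticipate any analytic obstacle: the argument is bookkeeping applied to a known identity. The only genuinely delicate points are the two positivity facts — the nonnegativity of the flux through $N$, so that it can be thrown away, and the positivity of $\widetilde{w}^\prime$, so that the tangential-derivative integral lands on the correct side — and both were arranged in advance, respectively by the definition of $\widetilde{L}$ and by the choice $\mu<0$ in the definition of $\widehat{w}$. A minor point to keep straight is the factor $\tfrac12$ relating the $t_i$-slice integrands to the form in Lemma \ref{howtogetthedesirednormintheexpressionofenergyestimate}, but this is harmless since the estimate is stated up to $\les$.
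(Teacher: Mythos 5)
Your proposal is correct and follows essentially the same route as the paper's own proof: both start from the weighted conservation identity of Corollary \ref{WeightedenergyestimateusingHandnosmallnessyet}, insert the decomposition of Lemma \ref{expressionofTttplusTtr} and the divergence formula \eqref{expressionofdivergenceofTmutwithoutdecompisinginaframe}, invoke Lemma \ref{howtogetthedesirednormintheexpressionofenergyestimate} for the slice terms, discard the nonnegative flux through $N$, and bound the remaining terms by $|H|\,|\derm\Phi_V|^2$ and $|\derm H|\,|\derm\Phi_V|^2$. The two positivity points you flag (the sign of $T^{(\bf{g})}_{\hat{L}t}$ and of $\widetilde{w}^\prime$) are indeed the only delicate ingredients, and you justify them exactly as the paper does.
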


\begin{proof}

By injecting the expression obtained in Lemma \ref{expressionofTttplusTtr} and the expression obtained in \ref{evaluationofthecovariantdivergenceofourenergymomentumtensor} using \eqref{expressionofdivergenceofTmutwithoutdecompisinginaframe}, in Corollary  \ref{WeightedenergyestimateusingHandnosmallnessyet}, we obtain the following conservation law 

  \bea
  \notag
   &&   \int_{\Sigma^{ext}_{t_2} } \Big(  - \frac{1}{2} (m^{t t} + H^{tt} ) < \derm_t \Phi ,   \derm_t \Phi >   + \frac{1}{2} (m^{j i} +H^{j i} )   < \derm_j \Phi ,   \derm_i \Phi > \Big)  \cdot \widetilde{w}(q)  \cdot d^{n}x \\
   \notag
&&  + \int_{t_1}^{t_2}  \int_{\Sigma^{ext}_{\tau} }  \Big(    \frac{1}{2} \Big(  < \derm_t  \Phi + \derm_r \Phi, \derm_t  \Phi + \derm_r \Phi >    \\
   \notag
  && +  \de^{ij}  < ( \derm_i - \frac{x_i}{r} \derm_{r}  )\Phi , (\derm_j - \frac{x_j}{r} \derm_{r} ) \Phi > \Big)   \\
    \notag
 &&  - \frac{1}{2} H^{t t}< \derm_t \Phi ,   \derm_t \Phi >   + \frac{1}{2} H^{j i}  < \derm_j \Phi ,   \derm_i \Phi >  \\
   \notag
&&  + H^{r t}< \derm_t \Phi ,   \derm_t \Phi > + H^{r j}< \derm_j \Phi ,   \derm_t \Phi >   \Big) \cdot d\tau \cdot \widetilde{w}^\prime (q) d^{n}x \\
     \notag
   &=&   \int_{\Sigma^{ext}_{t_1} }   \Big(  - \frac{1}{2}  (m^{t t} + H^{tt} ) < \derm_t \Phi ,   \derm_t \Phi >   + \frac{1}{2} (m^{j i} +H^{j i} )  < \derm_j \Phi ,   \derm_i \Phi >  \Big)  \cdot \widetilde{w}(q)\cdot d^{n}x  \\
     \notag
     &&- \int_{t_1}^{t_2}  \int_{\Sigma^{ext}_{\tau} } \Big(  < g^{\mu\a} \derm_{\mu } \derm_\a \Phi ,   \derm_t \Phi > +  \frac{1}{2} \cdot   ( \derm_{t } g^{t \a} ) \cdot < \derm_\a \Phi ,   \derm_t \Phi > \\
     \notag
     && +( \derm_{j } g^{j \a} ) \cdot < \derm_\a \Phi ,   \derm_t \Phi >  - \frac{1}{2} \cdot  ( \derm_{t } g^{j \b} ) \cdot < \derm_j \Phi ,   \derm_\b \Phi >     \Big) \cdot d\tau \cdot \widetilde{w}(q) d^{n}x  \\
     \notag
&&     -  \int_{N_{t_1}^{t_2} }  \big( T_{\hat{L} t}^{(\bf{g})} \big)  \cdot  \widetilde{w}(q) \cdot dv^{(\bf{m})}_N  \; .\\
 \eea
 
 We get,
   \beaa
     \notag
 &&  \int_{\Sigma^{ext}_{t_2} }   \Big(  - \frac{1}{2}  (m^{t t} + H^{tt} ) < \derm_t \Phi ,   \derm_t \Phi >   + \frac{1}{2} (m^{j i} +H^{j i} )  < \derm_j \Phi ,   \derm_i \Phi >  \Big)  \cdot \widetilde{w}(q)\cdot d^{n}x  \\
 \notag
  && + \int_{t_1}^{t_2}  \int_{\Sigma^{ext}_{\tau} }  \Big(    \frac{1}{2} \Big(  | \derm_t  \Phi + \derm_r \Phi |^2  +  \de^{ij}  | ( \derm_i - \frac{x_i}{r} \derm_{r}  )\Phi |^2 \Big)  \cdot d\tau \cdot \widetilde{w}^\prime (q) d^{n}x \\
     \notag
  &= &       \int_{\Sigma^{ext}_{t_1} } \Big(  - \frac{1}{2} (m^{t t} + H^{tt} ) < \derm_t \Phi ,   \derm_t \Phi >   + \frac{1}{2} (m^{j i} +H^{j i} )   < \derm_j \Phi ,   \derm_i \Phi > \Big)  \cdot \widetilde{w}(q)  \cdot d^{n}x \\
   \notag
&&  - \int_{t_1}^{t_2}  \int_{\Sigma^{ext}_{\tau} }  \Big(    - \frac{1}{2} H^{t t}< \derm_t \Phi ,   \derm_t \Phi >   + \frac{1}{2} H^{j i}  < \derm_j \Phi ,   \derm_i \Phi > \\
\notag
&& + H^{r t}< \derm_t \Phi ,   \derm_t \Phi > + H^{r j}< \derm_j \Phi ,   \derm_t \Phi >   \Big) \cdot d\tau \cdot \widetilde{w}^\prime (q) d^{n}x \\
     \notag
     &&- \int_{t_1}^{t_2}  \int_{\Sigma^{ext}_{\tau} } \Big(  < g^{\mu\a} \derm_{\mu } \derm_\a \Phi ,   \derm_t \Phi > +  \frac{1}{2} \cdot   ( \derm_{t } g^{t \a} ) \cdot < \derm_\a \Phi ,   \derm_t \Phi > \\
     \notag
     && +( \derm_{j } g^{j \a} ) \cdot < \derm_\a \Phi ,   \derm_t \Phi >  - \frac{1}{2} \cdot  ( \derm_{t } g^{j \b} ) \cdot < \derm_j \Phi ,   \derm_\b \Phi >     \Big) \cdot d\tau \cdot \widetilde{w}(q) d^{n}x  \\
  \notag
 && -  \int_{N_{t_1}^{t_2} }  \big( T_{\hat{L} t}^{(\bf{g})} \big)  \cdot  \widetilde{w}(q) \cdot dv^{(\bf{m})}_N  \; . \\
 \eeaa
 
Based on Lemma \ref{howtogetthedesirednormintheexpressionofenergyestimate}, we have that for $ | H| \leq C < \frac{1}{n} $\;, the following equivalence,
 \bea
 \notag
   && - \frac{1}{2} (m^{t t} + H^{tt} ) < \derm_t \Phi_{V} ,   \derm_t \Phi_{V} >   + \frac{1}{2} (m^{j i} +H^{j i} )   < \derm_j \Phi_{V} ,   \derm_i \Phi_{V} > \\
   &\sim & |\derm \Phi_{V} |^2 \geq 0 \; . 
  \eea
By choosing choosing the vectors $U\,,\, V$ to be wave coordinates vector fields, and by we summing over all of them, we get the following energy estimate,

   \beaa
 &&     \int_{\Sigma^{ext}_{t_2} }  |\derm \Phi |^2     \cdot w(q)  \cdot d^{n}x    + \int_{N_{t_1}^{t_2} }  \big( T_{\hat{L} t}^{(\bf{g})} \big)  \cdot  \widetilde{w}(q) \cdot dv^{(\bf{m})}_N \\
 \notag
 &&+ \int_{t_1}^{t_2}  \int_{\Sigma^{ext}_{\tau} }  \Big(    \frac{1}{2} \Big(  | \derm_t  \Phi + \derm_r \Phi |^2  +  \de^{ij}  | ( \derm_i - \frac{x_i}{r} \derm_{r}  )\Phi |^2 \Big)  \cdot d\tau \cdot \widetilde{w}^\prime (q) d^{n}x \\
  \notag
  &\les &       \int_{\Sigma^{ext}_{t_1} }  |\derm \Phi |^2     \cdot \widetilde{w}(q)  \cdot d^{n}x \\
    \notag
&&  + \int_{t_1}^{t_2}  \int_{\Sigma^{ext}_{\tau} }   | \Big(       - \frac{1}{2} H^{t t}< \derm_t \Phi ,   \derm_t \Phi >   + \frac{1}{2} H^{j i}  < \derm_j \Phi ,   \derm_i \Phi >  \\
   \notag
&&  + H^{r t}< \derm_t \Phi ,   \derm_t \Phi > + H^{r j}< \derm_j \Phi ,   \derm_t \Phi >   \Big) |  \cdot d\tau \cdot \widetilde{w}^\prime (q) d^{n}x \\
     \notag
     &&+ \int_{t_1}^{t_2}  \int_{\Sigma^{ext}_{\tau} } | \Big(  < g^{\mu\a} \derm_{\mu } \derm_\a \Phi ,   \derm_t \Phi > +  \frac{1}{2} \cdot   ( \derm_{t } H^{t \a} ) \cdot < \derm_\a \Phi ,   \derm_t \Phi > \\
     \notag
     && +( \derm_{j } H^{j \a} ) \cdot < \derm_\a \Phi ,   \derm_t \Phi >  - \frac{1}{2} \cdot  ( \derm_{t } H^{j \b} ) \cdot < \derm_j \Phi ,   \derm_\b \Phi >     \Big) |  \cdot d\tau \cdot \widetilde{w}(q) d^{n}x  \; .
 \eeaa
 
Using the fact that we have by construction  $T_{\hat{L} t}^{(\bf{g})} \geq 0$ (see Definition \ref{definitionofwidetildeLfordivergencetheoremuse} and also Definition \ref{definitionofLtilde}), we then get the result.

\end{proof}

 \begin{corollary}\label{TheenerhyestimatewithtermsinvolvingHandderivativeofHandwithwandhatw}
  For 
\bea
| H| < \frac{1}{n}\; ,
\eea
where $n$ is the space dimension, and for $\Phi$ decaying sufficiently fast at spatial infinity, we have
  \beaa
 &&     \int_{\Sigma^{ext}_{t_2} }  |\derm \Phi_{V} |^2     \cdot w(q)  \cdot d^{n}x   \\
 \notag
 &&+ \int_{t_1}^{t_2}  \int_{\Sigma^{ext}_{\tau} }  \Big(    \frac{1}{2} \Big(  | \derm_t  \Phi_{V} + \derm_r \Phi_{V} |^2  +  \de^{ij}  | ( \derm_i - \frac{x_i}{r} \derm_{r}  )\Phi_{V} |^2 \Big)  \cdot d\tau \cdot \widehat{w}^\prime (q) d^{n}x \\
  \notag
   &\les &       \int_{\Sigma^{ext}_{t_1} }  |\derm \Phi_{V} |^2     \cdot w(q)  \cdot d^{n}x   + \int_{t_1}^{t_2}  \int_{\Sigma^{ext}_{\tau} }    | H |  \cdot  | \derm \Phi_{V} |^2     \cdot \widehat{w}^\prime (q) \cdot d^{n}x \cdot d\tau\\
     \notag
     &&+ \int_{t_1}^{t_2}  \int_{\Sigma^{ext}_{\tau} }  \Big(   | g^{\mu\a} \derm_{\mu } \derm_\a \Phi_{V} | \cdot |  \derm \Phi_{V} |  + | \derm H  | \cdot | \derm \Phi_{V} |^2       \Big)    \cdot w(q) \cdot d^{n}x \cdot d\tau  \; .\\
 \eeaa
 
 \end{corollary}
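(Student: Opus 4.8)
The plan is to obtain this corollary directly from Lemma \ref{TheenerhyestimatewithtermsinvolvingHandderivativeofH}, which is the very same estimate but stated with the weight $\widetilde{w}$ in place of $w$ and with $\widetilde{w}^{\prime}$ in place of $\widehat{w}^{\prime}$, by feeding in the two--sided weight equivalences of Lemma \ref{equaivalenceoftildewandtildeandofderivativeoftildwandderivativeofhatw}, namely $\widetilde{w}^{\prime}\sim\widehat{w}^{\prime}$ and, since we are in the regime $\mu<0$, $\widetilde{w}\sim w$. The entire content is then a bookkeeping check that in each of the five weighted terms the equivalence is used in the direction that preserves the inequality $\les$; this is immediate because every integrand carrying a weight is manifestly nonnegative (a sum of squares, or $|H|\cdot|\derm\Phi_{V}|^{2}$, or $|\derm\Phi_{V}|^{2}$), and the boundary term on $N_{t_1}^{t_2}$ has already been discarded inside the proof of Lemma \ref{TheenerhyestimatewithtermsinvolvingHandderivativeofH} using $T_{\hat{L} t}^{(\bf{g})}\geq 0$.

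First I would treat the left--hand side of Lemma \ref{TheenerhyestimatewithtermsinvolvingHandderivativeofH}. On the slice $\Sigma^{ext}_{t_2}$ the bound $w\les\widetilde{w}$ (which holds trivially, as $\widetilde{w}=\widehat{w}+w\geq w$) yields $\int_{\Sigma^{ext}_{t_2}}|\derm\Phi_{V}|^{2}\,w(q)\,d^{n}x\les\int_{\Sigma^{ext}_{t_2}}|\derm\Phi_{V}|^{2}\,\widetilde{w}(q)\,d^{n}x$; and in the spacetime integral the bound $\widehat{w}^{\prime}\les\widetilde{w}^{\prime}$, combined with the nonnegativity of $\frac{1}{2}\big(|\derm_t\Phi_{V}+\derm_r\Phi_{V}|^{2}+\de^{ij}|(\derm_i-\frac{x_i}{r}\derm_r)\Phi_{V}|^{2}\big)$, yields the corresponding inequality for the bulk term. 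Hence the left--hand side of the corollary is dominated by the left--hand side of Lemma \ref{TheenerhyestimatewithtermsinvolvingHandderivativeofH}.

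Then I would treat the right--hand side in the opposite direction. The bound $\widetilde{w}\les w$, coming from $\widetilde{w}\sim w$ (here the hypothesis $\mu<0$ is used), upgrades both $\int_{\Sigma^{ext}_{t_1}}|\derm\Phi_{V}|^{2}\,\widetilde{w}(q)\,d^{n}x$ to weight $w$ and the last spacetime integral involving $|g^{\mu\a}\derm_{\mu}\derm_{\a}\Phi_{V}|\cdot|\derm\Phi_{V}|+|\derm H|\cdot|\derm\Phi_{V}|^{2}$ to weight $w$; and the bound $\widetilde{w}^{\prime}\les\widehat{w}^{\prime}$ upgrades $\int_{t_1}^{t_2}\!\int_{\Sigma^{ext}_{\tau}}|H|\cdot|\derm\Phi_{V}|^{2}\,\widetilde{w}^{\prime}(q)\,d^{n}x\,d\tau$ to the same integral with $\widehat{w}^{\prime}$. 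Chaining these with the estimate of Lemma \ref{TheenerhyestimatewithtermsinvolvingHandderivativeofH} gives precisely the asserted inequality.

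I do not expect any analytic obstacle: the real work has been done in Lemmas \ref{TheenerhyestimatewithtermsinvolvingHandderivativeofH} and \ref{equaivalenceoftildewandtildeandofderivativeoftildwandderivativeofhatw}. The only place demanding a little care is the direction of each substitution—one must verify term by term that the chosen direction of $\sim$ matches the side of $\les$ on which the term appears—which is why the nonnegativity of every weighted integrand, and the constraint $\mu<0$ underlying $\widetilde{w}\sim w$, should be recorded explicitly rather than left implicit.
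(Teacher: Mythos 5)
Your proposal is correct and is essentially the paper's own proof: the paper likewise obtains the corollary by injecting the equivalences $\widetilde{w}\sim w$ (valid for $\mu<0$) and $\widetilde{w}^{\prime}\sim\widehat{w}^{\prime}$ from Lemma \ref{equaivalenceoftildewandtildeandofderivativeoftildwandderivativeofhatw} into Lemma \ref{TheenerhyestimatewithtermsinvolvingHandderivativeofH}. Your term-by-term verification that each equivalence is applied in the direction compatible with the side of $\les$ on which the (nonnegative) integrand sits is exactly the bookkeeping the paper leaves implicit.
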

 
 \begin{proof}
 Using Lemma \ref{equaivalenceoftildewandtildeandofderivativeoftildwandderivativeofhatw} and injecting in Lemma \ref{TheenerhyestimatewithtermsinvolvingHandderivativeofH}, we get the stated result.

 \end{proof}

\section{Ingredients of the proof of the exterior stability of the Minkowski space-time for $n\geq 4$}

\subsection{The Minkowski vector fields}\ \label{TheMinkwoskivectorfieldsdefinition}

First, we refer the reader to \cite{G4} for more details. Let 
\beaa
x_{\b} &=& m_{\mu\b} x^{\mu} \;, \\
Z_{\a\b} &=& x_{\b} \pa_{\a} - x_{\a} \pa_{\b}  \;,  \\
S &=& t \pa_t + \sum_{i=1}^{n} x^i \pa_{i}  \; .
\eeaa

The Minkowski vector fields are the vectors of the following set
\bea
{\cal Z}  := \big\{ Z_{\a\b}\;,\; S\;,\; \pa_{\a} \, \,  | \, \,   \a\;,\; \b \in \{ 0, \ldots, n \} \big\}  \; .
\eea
Vectors belonging to ${\cal Z}$ will be denoted by $Z$\;.

\begin{definition} \label{DefinitionofZI}
We define
\bea
Z^I :=Z^{\iota_1} \ldots Z^{\iota_k} \quad \text{for} \quad I=(\iota_1, \ldots,\iota_k),  
\eea
where $\iota_i$ is an $\frac{(n^2 + 3n + 4)}{2}$-dimensional integer index, with $|\iota_i|=1$, and $Z^{\iota_i}$ representing each a vector field from the family ${\cal Z}$.

For a tensor $T$, of arbitrary order, either a scalar or valued in the Lie algebra, we define the Lie derivative as
\bea
\Lie_{Z^I} T :=\Lie_{Z^{\iota_1}} \ldots \Lie_{Z^{\iota_k}} T \quad \text{for} \quad I=(\iota_1, \ldots,\iota_k) .
\eea
\end{definition}

\subsection{The bootstrap argument in the exterior}\

We look at the case where $n \geq 4$\;. As in \cite{G4}, we define

 \bea
 h_{\mu\nu} = g_{\mu\nu} - m_{\mu\nu}   \; .
\eea

We then define the weighted energy as follows, yet this time, it is restricted to the exterior region,
\bea\label{definitionoftheweightedenergy}
\notag
\E_{|I|}^{ext} (\tau) :=  \sum_{|J|\leq |I|} \big( \|w^{1/2}   \derm ( \Lie_{Z^J} h   (t,\cdot) )  \|_{L^2 (\Sigma^{ext}_{t} ) } +  \|w^{1/2}   \derm ( \Lie_{Z^J}  A   (t,\cdot) )  \|_{L^2 (\Sigma^{ext}_{t} ) } \big) \, ,
\eea
where $\Sigma^{ext}_{t}$ is definied in Definition \ref{definitionoftheexteriorslicesigmat}. We will run a bootstrap argument on $\E_{|I|}^{ext}$\;. We assume for some $|I| \geq N$\,, which we will determine later that
\bea \label{bootstrapexter}
\E_{ N }^{ext} (t)  \leq E^{ext}  (N )  \cdot \eps \cdot (1 +t)^\delta ,
\eea
where $E^{ext}  ( N )$ is a constant that depends on $ N $ to be chosen later. In this paper, we choose
\bea\label{delataqualtozero}
\delta &=& 0 \;, \\ 
\eps &=& 1 \; . \label{epsequaltoone}
\eea
We will then show that we can improve the constant $E^{ext}  (N )$ to  $\frac{1}{2} \cdot E^{ext}  ( N )$, and obtain
\beaa
\E_{ N }^{ext}  (t)  \leq \frac{ E^{ext}  ( N )}{2}   \cdot \eps \cdot (1 +t)^\delta \; .
\eeaa

We will now make use of the Klainerman-Sobolev inequality which holds also true in the exterior region $\overline{C}$, the complementary of $C$ (the future causal domain of dependance for the metric $g$, of the compact $K \subset \Sigma_{t_1}$). We will then run the same bootstrap argument for the exterior energy. 

\subsection{Weighted Klainerman-Sobolev inequality in the exterior}\

The weight is defined again as being $w$ (see Definition \eqref{defoftheweightw}), for some $\gamma > 0$\;. However, the integration for the $L^2$ will be supported only on the exterior regions $\Sigma^{ext}_{t} = \Sigma_{t} \cap \overline{C}$. Then, we have globally the following pointwise estimate in the exterior region $\overline{C}$ for any smooth scalar function $\phi$ vanishing at spatial infinity, i.e. $\lim_{r \to \infty} \phi (t, x^1, \ldots, x^n) = 0$,
\bea
\notag
|\phi(t,x)| \cdot (1+t+|q|)^{\frac{(n-1)}{2}} \cdot \big[ (1+|q|) \cdot w(q)\big]^{1/2} \leq
C\sum_{|I|\leq  \lfloor  \frac{n}{2} \rfloor  +1 } \|\big(w(q)\big)^{1/2} Z^I \phi(t,\cdot)\|_{L^2 (\Sigma^{ext}_{t} ) } \;, \\
\eea
where here the $L^2(\Sigma^{ext}_{t} )$ norm is taken on $\Sigma^{ext}_{t} $ slice.

\subsection{The a priori estimates}\

Based on the calculations that we showed in \cite{G4}, we have the following exterior versions of the a prior estimates that we derived in \cite{G4}, by using the Klainerman-Svolev inequality in the exterior with the energy defined in the exterior.

\begin{lemma} \label{aprioriestimatesongradientoftheLiederivativesofthefields}
Under the bootstrap assumption \eqref{bootstrapexter}, taken for $N =  |I| +  \lfloor  \frac{n}{2} \rfloor  +1$\;, if for all $\mu, \nu \in (t, x^1, \ldots, x^n)$\;, and for any functions $ \pa_\mu \Lie_{Z^I} h^1_\nu \; , \; \pa_\mu \Lie_{Z^I} A_\nu  \in C^\infty_0(\R^n)$\;, then we have
 \bea
 \notag
|\derm  ( \Lie_{Z^I}  A ) (t,x)  |           &\leq& \begin{cases} C ( |I| ) \cdot E^{ext} ( |I| + \lfloor  \frac{n}{2} \rfloor  +1 )  \cdot \frac{\eps }{(1+t+|q|)^{\frac{(n-1)}{2}-\delta} (1+|q|)^{1+\gamma}},\quad\text{when }\quad q>0 \; ,\\
           \notag
       C ( |I| ) \cdot E^{ext} ( |I| + \lfloor  \frac{n}{2} \rfloor  +1)  \cdot \frac{\eps  }{(1+t+|q|)^{\frac{(n-1)}{2}-\delta}(1+|q|)^{\frac{1}{2} }}  \,\quad\text{when }\quad q<0 \;  , \end{cases} \\
      \eea
and 
 \bea
 \notag
|\derm ( \Lie_{Z^I}  h ) (t,x)  |   &\leq& \begin{cases} C ( |I| ) \cdot E^{ext} ( |I| + \lfloor  \frac{n}{2} \rfloor  +1)  \cdot \frac{\eps }{(1+t+|q|)^{\frac{(n-1)}{2}-\delta} (1+|q|)^{1+\gamma}},\quad\text{when }\quad q>0 \;  ,\\
       C ( |I| ) \cdot E^{ext} ( |I| + \lfloor  \frac{n}{2} \rfloor  +1)  \cdot \frac{\eps  }{(1+t+|q|)^{\frac{(n-1)}{2}-\delta}(1+|q|)^{\frac{1}{2} }}  \,\quad\text{when }\quad q<0 \; . \end{cases} \\
      \eea

\end{lemma}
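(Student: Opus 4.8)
The plan is to read this estimate off from the weighted Klainerman--Sobolev inequality in the exterior stated above, combined with the bootstrap assumption \eqref{bootstrapexter}; the argument is the one of \cite{G4}, carried out verbatim except that every $L^2$ integral is now restricted to the exterior slices $\Sigma^{ext}_t$. First I would fix indices $\mu,\nu$ and apply the exterior weighted Klainerman--Sobolev inequality to the scalar function $\phi := \derm_\mu(\Lie_{Z^I}A)_\nu$, which by hypothesis lies in $C^\infty_0(\R^n)$ and hence vanishes at spatial infinity; this yields
\[
|\derm_\mu(\Lie_{Z^I}A)_\nu(t,x)|\cdot(1+t+|q|)^{\frac{n-1}{2}}\cdot\big[(1+|q|)\,w(q)\big]^{1/2}\les\sum_{|J|\le\lfloor\frac n2\rfloor+1}\|w^{1/2}Z^J\derm_\mu(\Lie_{Z^I}A)_\nu(t,\cdot)\|_{L^2(\Sigma^{ext}_t)}.
\]
Squaring and summing over the finitely many values of $\mu,\nu$ replaces the left-hand side by $|\derm(\Lie_{Z^I}A)(t,x)|$ times the same weight; the same is done with $A$ replaced by $h$.

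The second step is the commutation estimate for the Minkowski vector fields. Each $Z\in{\cal Z}$ satisfies favorable commutation relations with $\derm$ (the rotations and boosts are Killing for $m$, the scaling $S$ is homothetic, the translations are trivial, and the coefficients $\pa_\nu Z^\mu$ are constants), so that $Z^J$ acting on a scalar component $\derm_\mu(\Lie_{Z^I}A)_\nu$ equals a constant-coefficient linear combination of components of $\derm(\Lie_{Z^{I'}}A)$ with $|I'|\le|I|+|J|$, with no auxiliary weight in $t$ or $|q|$ produced. This is precisely the bookkeeping of \cite{G4}. Consequently
\[
\sum_{|J|\le\lfloor\frac n2\rfloor+1}\|w^{1/2}Z^J\derm_\mu(\Lie_{Z^I}A)_\nu(t,\cdot)\|_{L^2(\Sigma^{ext}_t)}\le C(|I|)\sum_{|I'|\le|I|+\lfloor\frac n2\rfloor+1}\|w^{1/2}\derm(\Lie_{Z^{I'}}A)(t,\cdot)\|_{L^2(\Sigma^{ext}_t)},
\]
and adding the analogous contribution with $A$ replaced by $h$ gives exactly $C(|I|)\,\E^{ext}_N(t)$ with $N=|I|+\lfloor\frac n2\rfloor+1$, in the notation of \eqref{definitionoftheweightedenergy}.

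In the last step I would invoke \eqref{bootstrapexter} with this value of $N$ to bound $\E^{ext}_N(t)\le E^{ext}(N)\cdot\eps\cdot(1+t)^\delta$, divide the inequality from the first two steps through by $(1+t+|q|)^{\frac{n-1}{2}}\big[(1+|q|)w(q)\big]^{1/2}$, and use $(1+t)^\delta\le(1+t+|q|)^\delta$ to obtain the factor $(1+t+|q|)^{\delta-\frac{n-1}{2}}$. Substituting the definition of $w$ from Definition \ref{defoftheweightw} then finishes the computation: for $q>0$ one has $w(q)=(1+|q|)^{1+2\gamma}$, so $\big[(1+|q|)w(q)\big]^{1/2}=(1+|q|)^{1+\gamma}$, giving the first case; for $q<0$ one has $w(q)=1$, so $\big[(1+|q|)w(q)\big]^{1/2}=(1+|q|)^{1/2}$, giving the second. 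The estimate for $|\derm(\Lie_{Z^I}h)(t,x)|$ is obtained in exactly the same way.

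The only genuinely non-clerical input is the commutation estimate of the second step: one must verify that the homothety correction coming from $S$ together with the constant coefficients $\pa Z$ reproduce terms of the same type $|\derm(\Lie_{Z^{I'}}A)|$, $|I'|\le|I|+|J|$, and in particular generate no extra growth in $t$ or in $|q|$ that would degrade the weight $w(q)$. Everything else is a direct application of the already-stated exterior Klainerman--Sobolev inequality and of the bootstrap bound \eqref{bootstrapexter}, so I expect that step to be where the care is needed and the rest to be routine.
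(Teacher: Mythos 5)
Your proposal is correct and is essentially the paper's own argument: the paper derives this lemma precisely by applying the exterior weighted Klainerman--Sobolev inequality componentwise to $\derm_\mu(\Lie_{Z^I}A)_\nu$ and $\derm_\mu(\Lie_{Z^I}h)_{\mu\nu}$, commuting the $Z^J$ through $\derm$ via the constant-coefficient commutation relations (as in \cite{G4}), bounding the result by $C(|I|)\,\E^{ext}_{N}(t)$ with $N=|I|+\lfloor n/2\rfloor+1$, and then invoking the bootstrap bound \eqref{bootstrapexter} together with the explicit form of $w$. Your unpacking of the weight $\big[(1+|q|)w(q)\big]^{1/2}$ into $(1+|q|)^{1+\gamma}$ for $q>0$ and $(1+|q|)^{1/2}$ for $q<0$ matches the stated cases exactly.
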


\begin{lemma}\label{aprioriestimatefrombootstraponzerothderivativeofAandh1}
For $k =  |I| +  \lfloor  \frac{n}{2} \rfloor  +1$\,, and for $\ga >0$\, and with initial data such that
\beaa
|   A (0,x) |  +  |   h^1 (0,x) |   &\les&   \frac{\eps }{ (1+r)^{\frac{(n-1)}{2}+\gamma-\delta}} \; ,
\eeaa
then, we have for all $ | I  | $\,,
 \bea
 \notag
&&| \Lie_{Z^I}  A  (t,x)  |   +| \Lie_{Z^I}  h  (t,x)  |   \\
 \notag 
 &\leq& \begin{cases} c (\gamma) \cdot  C ( |I| ) \cdot E^{ext} ( |I| +  \lfloor  \frac{n}{2} \rfloor  +1)  \cdot \frac{\eps }{(1+t+|q|)^{\frac{(n-1)}{2}-\delta} (1+|q|)^{\gamma}},\quad\text{when }\quad q>0\; ,\\
       C ( |I| ) \cdot E^{ext} ( |I| +  \lfloor  \frac{n}{2} \rfloor  +1)  \cdot \frac{\eps \cdot (1+| q |   )^\frac{1}{2} }{(1+t+|q|)^{\frac{(n-1)}{2}-\delta} }  \,\quad\text{when }\quad q<0 \; . \end{cases} \\
      \eea

\end{lemma}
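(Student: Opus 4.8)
The plan is to upgrade the pointwise bound on $\derm(\Lie_{Z^I}A)$ and $\derm(\Lie_{Z^I}h)$ furnished by Lemma~\ref{aprioriestimatesongradientoftheLiederivativesofthefields} to a bound on the fields themselves, by integrating it along a curve that stays in the exterior region and along which the fields are already controlled. Writing $h=h^0+h^1$ with $h^0$ explicit and directly of the asserted form on $\overline{C}$, it suffices to treat $\Lie_{Z^I}A$ and $\Lie_{Z^I}h^1$. Fix $(t,r\omega)\in\overline{C}$ with $\omega$ a unit vector. By the construction of $N$ in Definition~\ref{definitionofNandofNtruncatedbtweentwots}, $\overline{C}$ contains $\{q\ge q_0\}$ and satisfies: if $(t,r\omega)\in\overline{C}$ and $\rho\ge r$ then $(t,\rho\omega)\in\overline{C}$; moreover the fields tend to $0$ as $\rho\to\infty$ (by asymptotic flatness of the data, consistently with the integrability in $\rho$ of the bound of Lemma~\ref{aprioriestimatesongradientoftheLiederivativesofthefields} and with the compact-support reduction used there). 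Hence, for $\Psi\in\{A,h^1\}$,
\[
|\Lie_{Z^I}\Psi(t,r\omega)|\ \lesssim\ \int_r^{\infty}\bigl|\derm(\Lie_{Z^I}\Psi)(t,\rho\omega)\bigr|\,d\rho .
\]
Equivalently one may integrate downward along $\{(\tau,r\omega):0\le\tau\le t\}$, which also stays in $\overline{C}$, at the cost of the extra term $|\Lie_{Z^I}\Psi(0,r\omega)|$, which is where the initial-data hypothesis (extended to $\Lie_{Z^I}$ of the data through $\overline{\E}_N$) enters.

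The second step is to insert the estimate of Lemma~\ref{aprioriestimatesongradientoftheLiederivativesofthefields} and evaluate the resulting one-dimensional integral, splitting on the sign of $q=r-t$. For $q>0$ every point of the ray has $\rho-t\ge q>0$, so only the $q>0$ branch contributes; with $1+t+(\rho-t)=1+\rho$ and the substitution $s=\rho-t$ one is reduced to $\int_q^{\infty}(1+t+s)^{-(\frac{n-1}{2}-\delta)}(1+s)^{-1-\gamma}\,ds$, which is bounded by $(1+t+q)^{-(\frac{n-1}{2}-\delta)}\int_q^{\infty}(1+s)^{-1-\gamma}\,ds\lesssim \gamma^{-1}(1+t+|q|)^{-(\frac{n-1}{2}-\delta)}(1+|q|)^{-\gamma}$; since $1+r=1+t+|q|$ in this regime, this is precisely the asserted bound, and its value at $t=0$ reproduces the initial-data hypothesis, so the downward-integration term, if used, is absorbed.

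For $q<0$ I split the ray at $\rho=t$. On $\rho\in[r,t]$ one has $\rho-t\le 0$ and $|\rho-t|=t-\rho\in[0,|q|]$, and $1+t+|\rho-t|=1+2t-\rho\ge 1+t\gtrsim 1+t+|q|$, the last step because $|q|=t-r\le t$; hence this piece is $\lesssim (1+t+|q|)^{-(\frac{n-1}{2}-\delta)}\int_0^{|q|}(1+u)^{-1/2}\,du\lesssim (1+|q|)^{1/2}(1+t+|q|)^{-(\frac{n-1}{2}-\delta)}$. On $\rho\in[t,\infty)$ one has $\rho-t\ge 0$ and, exactly as above, the piece is $\lesssim_\gamma (1+t)^{-(\frac{n-1}{2}-\delta)}\lesssim (1+t+|q|)^{-(\frac{n-1}{2}-\delta)}$, dominated by the $(1+|q|)^{1/2}$-weighted bound; the downward-integration term $\lesssim\eps(1+r)^{-(\frac{n-1}{2}+\gamma-\delta)}$ is dominated likewise, using $1+r\gtrsim 1+t+|q|$ on $\overline{C}\cap\{q<0\}$, again a consequence of the construction of $N$. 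Restoring the prefactor $C(|I|)\,E^{ext}(|I|+\lfloor\frac{n}{2}\rfloor+1)\,\eps$ from Lemma~\ref{aprioriestimatesongradientoftheLiederivativesofthefields}, absorbing the $\gamma$-dependent tail constant, and summing over the finitely many components, yields the two stated inequalities.

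This is a routine one-dimensional integration, so I expect no serious conceptual difficulty; the points that require care are the geometric facts about $\overline{C}$ — that the chosen integration curve stays in $\overline{C}$, that $\overline{C}\supseteq\{q\ge q_0\}$, the radial-monotonicity property above, and, for the downward variant, the comparison $1+r\gtrsim 1+t+|q|$ on $\overline{C}\cap\{q<0\}$ — all of which follow from the construction of $N$, together with the bookkeeping of the $\gamma$- and $|I|$-dependent constants and the verification that the decay of the data propagates to $\Lie_{Z^I}$ of the data. An alternative route, closer to the discussion in the introduction, avoids the curve-integration: apply the weighted Hardy inequality of Corollary~\ref{HardytypeinequalityforintegralstartingatROm} to the exterior energy to obtain a weighted $L^2$ bound for $\Lie_{Z^J}A$ and $\Lie_{Z^J}h^1$, then invoke the exterior Klainerman--Sobolev inequality stated above; this produces the same estimate.
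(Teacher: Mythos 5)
The paper does not actually prove this lemma: it is stated as one of the ``exterior versions of the a priori estimates derived in \cite{G4}'', obtained there via the (exterior) Klainerman--Sobolev inequality, so there is no in-paper argument to compare against line by line. Your radial-integration derivation is the standard way to pass from the gradient bound of Lemma \ref{aprioriestimatesongradientoftheLiederivativesofthefields} to the zeroth-order bound, and your computation is essentially correct: it reproduces exactly the constant structure of the statement, namely the $c(\gamma)\sim\gamma^{-1}$ factor from $\int_q^\infty(1+s)^{-1-\gamma}\,ds$ in the region $q>0$ and the $(1+|q|)^{1/2}$ growth from $\int_0^{|q|}(1+u)^{-1/2}\,du$ in the region $q<0$. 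Your alternative route (weighted Hardy inequality of Corollary \ref{HardytypeinequalityforintegralstartingatROm} plus the exterior Klainerman--Sobolev inequality) is the one the paper's narrative actually gestures at, so either is acceptable. Two small points deserve care. First, in the $q<0$ case the outer piece $\rho\in[t,\infty)$ of your integral contributes $c(\gamma)\cdot(1+t+|q|)^{-(\frac{n-1}{2}-\delta)}$, which is \emph{not} dominated by the stated $q<0$ branch (which carries no $c(\gamma)$) when $|q|$ is small and $\gamma$ is small; this imprecision is inherited from the statement itself and is harmless where the lemma is used (the exterior region, with constants allowed to depend on $q_0$), but you should not claim domination without the $c(\gamma)$. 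Second, the vanishing of $\Lie_{Z^I}\Psi(t,\rho\omega)$ as $\rho\to\infty$ at fixed $t>0$ is a propagation-of-spatial-decay statement that you assert rather than prove; it is consistent with the compact-support/fast-decay hypotheses appearing in the neighbouring lemmas, but strictly it is an additional input, which is presumably why the lemma's hypothesis is phrased as a condition on the data at $t=0$ (favouring your ``downward integration'' variant, or the Hardy route, over integration to spatial infinity at time $t$).
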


\begin{remark}\label{remarkontheinteriorregionbeingcontainedinqleqq_0}
Under the bootstrap assumption, and therefore under the a priori estimates in Lemmas \ref{aprioriestimatesongradientoftheLiederivativesofthefields} and \ref{aprioriestimatefrombootstraponzerothderivativeofAandh1}, we have that for any $q_0 \in \R$\;, there exists a point $(t, r=0)$ such that $N$ (defined in Definition \ref{definitionofNandofNtruncatedbtweentwots}) whose tip is $(t, r=0)$ is contained in the region $\{(t, x) \;|\; q:= r-t \leq q_0 \}$\;. 
\end{remark}

\begin{definition}
Based on Remark \ref{remarkontheinteriorregionbeingcontainedinqleqq_0}, we have that the exterior region $\overline{C} $ includes the region $\{(t, x) \;|\; q:= r- t \geq q_0 \}$. We refer in what follows to $q_0$ as being a choice of such $q$\;, from which we construct the exterior region as containing $\{ q \geq q_0 \}$\;.
\end{definition}

\subsection{The main exterior energy estimate}\

We now fix the space dimensions as being n $\geq 4$.
 \begin{lemma}\label{theenergyestimateforngeq4withoutestimatingthecommutatorterm}
 For $ H^{\mu\nu} = g^{\mu\nu}-m^{\mu\nu}$ satisfying 
\bea
| H| \leq  \frac{1}{n} \; ,
\eea
and for $\Phi$ decaying sufficiently fast at spatial infinity, for $\ga > 0 $ and $\mu < 0$,  we have
       \beaa
 &&     \int_{\Sigma^{ext}_{t_2} }  |\derm \Phi |^2     \cdot w(q)  \cdot d^{n}x   \\
 \notag
 &&+ \int_{t_1}^{t_2}  \int_{\Sigma^{ext}_{\tau} }  \Big(    \frac{1}{2} \Big(  | \derm_t  \Phi + \derm_r \Phi |^2  +  \de^{ij}  | ( \derm_i - \frac{x_i}{r} \derm_{r}  )\Phi |^2 \Big) \cdot \frac{\widehat{w} (q)}{(1+|q|)} \cdot d^{n}x  \cdot d\tau  \\
\notag
       &\les &       \int_{\Sigma^{ext}_{t_1} }  |\derm \Phi |^2     \cdot w(q)  \cdot d^{n}x   \\
       \notag
     &&  +   C(q_0) \cdot  c (\delta) \cdot c (\gamma) \cdot E^{ext}  (   \lfloor  \frac{n}{2} \rfloor  +1) \cdot \int_{t_1}^{t_2}   \frac{\eps}{ (1+ t  )^{\frac{3}{2} } }  \cdot  \int_{\Sigma^{ext}_{\tau} }    | \derm \Phi |^2    \cdot \frac{w (q)}{(1+|q|)}  \cdot d^{n}x  \cdot d\tau \\
     \notag
     &&+ \int_{t_1}^{t_2}  \int_{\Sigma^{ext}_{\tau} }   | g^{\mu\a} \derm_{\mu } \derm_\a \Phi | \cdot |  \derm \Phi |    \cdot w(q) \cdot d^{n}x  \cdot d\tau \; .\\
 \eeaa

 \end{lemma}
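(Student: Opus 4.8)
The plan is to start from Corollary \ref{TheenerhyestimatewithtermsinvolvingHandderivativeofHandwithwandhatw}, which already gives the energy estimate in the form
\beaa
\int_{\Sigma^{ext}_{t_2}} |\derm \Phi|^2 \, w(q)\, d^nx + \int_{t_1}^{t_2}\int_{\Sigma^{ext}_{\tau}} \tfrac12\big( |\derm_t\Phi+\derm_r\Phi|^2 + \de^{ij}|(\derm_i-\tfrac{x_i}{r}\derm_r)\Phi|^2\big)\, d\tau\, \widehat w^\prime(q)\, d^nx \les \cdots \;,
\eeaa
with the right-hand side containing the initial slice term, the term $|H|\,|\derm\Phi|^2\,\widehat w^\prime(q)$, and the term $\big(|g^{\mu\a}\derm_\mu\derm_\a\Phi|\,|\derm\Phi| + |\derm H|\,|\derm\Phi|^2\big)w(q)$. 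Then I would invoke Lemma \ref{derivativeoftildwandrelationtotildew} to replace $\widehat w^\prime(q)$ by $\tfrac{\widehat w(q)}{(1+|q|)}$ up to a constant (valid since $\ga>0$, $\mu<0$), both on the left-hand side (giving the stated tangential-derivative spacetime integral) and in the $|H|$-term on the right-hand side; and since $\mu<0$, Lemma \ref{equaivalenceoftildewandtildeandofderivativeoftildwandderivativeofhatw} gives $\widehat w(q)\sim\widetilde w(q)\sim w(q)$, so $\tfrac{\widehat w(q)}{(1+|q|)}\sim\tfrac{w(q)}{(1+|q|)}$. This puts the $|H|$-term into the shape $\int\int |H|\,|\derm\Phi|^2\,\tfrac{w(q)}{(1+|q|)}$.

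The next step is to bound $|H|$ pointwise by the decaying factors coming from the a priori estimates. Recall $H^{\mu\nu}=g^{\mu\nu}-m^{\mu\nu}$ is, up to lower-order (quadratic in $h$) corrections, $-h^{\mu\nu}$, so $|H| \les |h| + |h|^2$; applying Lemma \ref{aprioriestimatefrombootstraponzerothderivativeofAandh1} with $|I|=0$, $k=\lfloor\tfrac n2\rfloor+1$, and using $\de=0$, $\eps=1$ (by \eqref{delataqualtozero}, \eqref{epsequaltoone}), one gets in the exterior region $\{q\geq q_0\}$, where $1+|q|\les C(q_0)(1+t+|q|)$ on the relevant slice and $n\geq 4$ so $\tfrac{(n-1)}{2}\geq\tfrac32$, the bound
\beaa
|H|(t,x) \les C(q_0)\cdot c(\ga)\cdot E^{ext}(\lfloor\tfrac n2\rfloor+1)\cdot \frac{\eps}{(1+t)^{3/2}} \;,
\eeaa
uniformly for $q\geq q_0$ (the worst case being $q<0$, where the $(1+|q|)^{1/2}$ numerator is absorbed by one power of $(1+t+|q|)^{-1/2}$ and the remaining decay is at least $(1+t)^{-3/2}$; for $q>0$ the extra $(1+|q|)^{-\ga}$ only helps). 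Pulling this pointwise bound out of the $\tau$-integral in the $|H|$-term converts it exactly into the second term on the right-hand side of the claimed estimate, $C(q_0)c(\de)c(\ga)E^{ext}(\lfloor\tfrac n2\rfloor+1)\int_{t_1}^{t_2}\tfrac{\eps}{(1+t)^{3/2}}\int_{\Sigma^{ext}_\tau}|\derm\Phi|^2\tfrac{w(q)}{(1+|q|)}d^nx\,d\tau$.

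Finally, the term $\int\int |\derm H|\,|\derm\Phi|^2\,w(q)$ on the right of Corollary \ref{TheenerhyestimatewithtermsinvolvingHandderivativeofHandwithwandhatw} must be disposed of: using $|\derm H|\les |\derm h| + |h||\derm h|$ and Lemma \ref{aprioriestimatesongradientoftheLiederivativesofthefields} with $|I|=0$, one has $|\derm H|\les C(q_0)c(\ga)E^{ext}(\lfloor\tfrac n2\rfloor+1)(1+t)^{-3/2}\cdot\tfrac{1}{(1+|q|)}$ (again using $q\geq q_0$, $n\geq4$, and $\de=0,\eps=1$; the worst case $q<0$ gives $(1+t+|q|)^{-(n-1)/2}(1+|q|)^{-1/2}\les C(q_0)(1+t)^{-3/2}(1+|q|)^{-1}$), so this term is also absorbed into the same second right-hand side term after noting $w(q)\tfrac{1}{(1+|q|)}\les\tfrac{w(q)}{(1+|q|)}$; these two contributions combine into a single term with the common constant displayed. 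The remaining term $\int\int|g^{\mu\a}\derm_\mu\derm_\a\Phi|\,|\derm\Phi|\,w(q)$ is simply carried over unchanged, as it will be handled later when the wave equation source terms are substituted. I expect the main obstacle to be the bookkeeping of the $|H|$ and $|\derm H|$ bounds: one must carefully verify that in the exterior region $\{q\geq q_0\}$ the a priori decay rates (which look different for $q>0$ and $q<0$, and involve the $(1+|q|)^{1/2}$ growth factor in the interior-facing part $q<0$) really do combine, for every $n\geq4$, into the uniform $(1+t)^{-3/2}$ factor with only a harmless $(1+|q|)^{-1}$ left over to feed into the weighted norm — this is where the hypothesis $n\geq4$ and the choice of the exterior cutoff $q_0$ are genuinely used.
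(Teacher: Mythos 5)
Your proposal follows essentially the same route as the paper: start from Corollary \ref{TheenerhyestimatewithtermsinvolvingHandderivativeofHandwithwandhatw}, inject the exterior a priori bounds on $|H|$ and $|\derm H|$ coming from the bootstrap assumption and the Klainerman--Sobolev inequality, and use $\widehat{w}^{\prime}\sim\widehat{w}/(1+|q|)$ together with $\widehat{w}\les w$ (valid for $\mu<0$) to reach the stated form. Two small corrections to your justifications. First, for the worst case $q<0$ with $n=4$, absorbing the $(1+|q|)^{1/2}$ numerator ``by one power of $(1+t+|q|)^{-1/2}$'' leaves only $(1+t+|q|)^{-1}$, which is not $(1+t)^{-3/2}$; the correct mechanism (and the one the paper uses) is that on the exterior region one has $q_0\le q<0$, hence $|q|\le|q_0|$ and $(1+|q|)^{1/2}\le C(q_0)$, which is exactly where the constant $C(q_0)$ comes from. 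Second, Lemma \ref{equaivalenceoftildewandtildeandofderivativeoftildwandderivativeofhatw} does not give $\widehat{w}\sim w$: for $q<0$ one has $\widehat{w}(q)=(1+|q|)^{2\mu}\to 0$ while $w\equiv 1$, so only the one-sided bound $\widehat{w}\les w$ holds --- fortunately that is the only direction your argument (and the paper's) actually needs.
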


  \begin{proof}
 
 Using the bootstrap assumption on $H$ in the exterior, combined with the Klainerman-Sobolev inequality in the exterior region $q \geq q_0$, we obtain in the exterior region $\overline{C}$, as shown in \cite{G4}, that
 
              \bea\label{aprioriestimateondermcovariantderivativeofBigHwithLiederivativeL}
 \notag
 |\derm ( \Lie_{Z^I} H ) (t,x)  |    &\leq& \begin{cases} C ( |I| ) \cdot E^{ext} (|I| +  \lfloor  \frac{n}{2} \rfloor  +1)  \cdot \frac{\eps }{(1+t+|q|)^{\frac{(n-1)}{2}-\delta} (1+|q|)^{1+\ga}},\quad\text{when }\quad q>0,\\
       C ( |I| ) \cdot E^{ext}  (|I| +  \lfloor  \frac{n}{2} \rfloor  +1)  \cdot \frac{\eps  }{(1+t+|q|)^{\frac{(n-1)}{2}-\delta}(1+|q|)^{\frac{1}{2} }}  \,\quad\text{when }\quad q<0 , \end{cases} \\
      \eea
      and
       \bea\label{aprioriestimateonBigHwithLiederivativeL}
 \notag
|  \Lie_{Z^I} H (t,x)  | &\leq& \begin{cases} c (\delta) \cdot c (\gamma) \cdot C ( |I| ) \cdot E^{ext}  ( |I| +  \lfloor  \frac{n}{2} \rfloor  +1) \cdot  \frac{\eps}{ (1+ t + | q | )^{\frac{(n-1)}{2}-\delta }  \cdot  (1+| q |   )^{\ga}}  ,\quad\text{when }\quad q>0,\\
\notag
    C ( |I| ) \cdot E^{ext}  (|I| +  \lfloor  \frac{n}{2} \rfloor  +1) \cdot  \frac{\eps}{ (1+ t + | q | )^{\frac{(n-1)}{2}-\delta }  } \cdot (1+| q |   )^{\frac{1}{2} }  , \,\quad\text{when }\quad q<0 . \end{cases} \\ 
    \eea
    
           Taking $\delta = 0$, we have $0  \leq \frac{(4-2)}{2} = 1  \leq  \frac{(n-2)}{2}  $, for $n \geq 4$. Thus, for $n\geq 4$, in $\overline{C}$, we have
                         \bea\label{aprioriestimateondermcovariantderivativeofBigHwithLiederivativeLintheexteriorregion}
 \notag
 |\derm ( \Lie_{Z^I} H ) (t,x)  |    &\leq&  C(q_0) \cdot C ( |I| ) \cdot E^{ext} (|I| +  \lfloor  \frac{n}{2} \rfloor  +1)  \cdot \frac{\eps }{(1+t+|q|)^{\frac{3}{2}} \cdot  (1+|q|)^{1+\ga}}  \; , \\
      \eea
      and
       \bea\label{aprioriestimateonBigHwithLiederivativeLintheexteriorregion}
 \notag
|  \Lie_{Z^I} H (t,x)  | &\leq&  C(q_0) \cdot  c (\delta) \cdot c (\gamma) \cdot C ( |I| ) \cdot E^{ext}  ( |I| +  \lfloor  \frac{n}{2} \rfloor  +1) \cdot  \frac{\eps}{ (1+ t + | q | )^{\frac{3}{2} } \cdot  (1+| q |   )^{\ga}}  \; . \\
    \eea
   Yet, given the weighted energy estimate that we showed in Corollary \ref{TheenerhyestimatewithtermsinvolvingHandderivativeofHandwithwandhatw}, by taking $n \geq 4$ and by injecting the a priori estimates, considering that $\ga > 0$, we get that for 
   \bea
| H| < \frac{1}{n} \; ,
\eea
and for $\Phi$ decaying sufficiently fast at spatial infinity, that
    
       \beaa
 &&     \int_{\Sigma^{ext}_{t_2} }  |\derm \Phi |^2     \cdot w(q)  \cdot d^{n}x   \\
 \notag
 &&+ \int_{t_1}^{t_2}  \int_{\Sigma^{ext}_{\tau} }  \Big(    \frac{1}{2} \Big(  | \derm_t  \Phi + \derm_r \Phi |^2  +  \de^{ij}  | ( \derm_i - \frac{x_i}{r} \derm_{r}  )\Phi |^2 \Big)  \cdot \widehat{w}^\prime (q) \cdot  d^{n}x  \cdot d\tau\\
  \notag
   &\les &       \int_{\Sigma^{ext}_{t_1} }  |\derm \Phi |^2     \cdot w(q)  \cdot d^{n}x   + \int_{t_1}^{t_2}  \int_{\Sigma^{ext}_{\tau} }    | H |  \cdot  | \derm \Phi |^2    \cdot \widehat{w}^\prime (q) \cdot  d^{n}x  \cdot d\tau \\
     \notag
     &&+ \int_{t_1}^{t_2}  \int_{\Sigma^{ext}_{\tau} }  \Big(   | g^{\mu\a} \derm_{\mu } \derm_\a \Phi | \cdot |  \derm \Phi |  + | \derm H  | \cdot | \derm \Phi |^2       \Big)    \cdot w(q) \cdot d^{n}x  \cdot d\tau \\
     \notag
       &\les &       \int_{\Sigma^{ext}_{t_1} }  |\derm \Phi |^2     \cdot w(q)  \cdot d^{n}x   \\
       \notag
     &&  +   C(q_0) \cdot  c (\delta) \cdot c (\gamma) \cdot E^{ext}  (   \lfloor  \frac{n}{2} \rfloor  +1) \cdot \int_{t_1}^{t_2}  \int_{\Sigma^{ext}_{\tau} }     \frac{\eps}{ (1+ t  )^{\frac{3}{2} } }  \cdot  | \derm \Phi |^2     \cdot \big( \widehat{w}^\prime (q) +  \frac{w(q)}{(1+|q|)} \big) \cdot d^{n}x  \cdot d\tau \\
     \notag
     &&+ \int_{t_1}^{t_2}  \int_{\Sigma^{ext}_{\tau} }   | g^{\mu\a} \derm_{\mu } \derm_\a \Phi | \cdot |  \derm \Phi |       \cdot w(q)  \cdot d^{n}x  \cdot d\tau  \; .\\
 \eeaa

However, we showed in Lemma \ref{derivativeoftildwandrelationtotildew}, that for
\beaa
\widehat{w} (q)&:=&\begin{cases} (1+|q|)^{1+2\gamma} \quad\text{when }\quad q>0 , \\
        (1+|q|)^{2\mu}  \,\quad\text{when }\quad q<0 , \end{cases} 
\eeaa
we have, for $\ga \neq - \frac{1}{2} $ and $\mu \neq 0$,  
\beaa
\widehat{w}^{\prime}(q) \sim \frac{\widehat{w}(q)}{(1+|q|)} \; .
\eeaa
Furthermore, for $\mu < 0$, we have $\widehat{w} (q) \leq w(q)$, thus,
\beaa
\widehat{w}^{\prime}(q) \leq \frac{w(q)}{(1+|q|)} \; .
\eeaa
Consequently, for $\ga > 0 $ and $\mu < 0$,
       \bea\label{estimatetoreferencetoexplainthedefinitionoftildewandhatw}
       \notag
 &&     \int_{\Sigma^{ext}_{t_2} }  |\derm \Phi |^2     \cdot w(q)  \cdot d^{n}x   \\
 \notag
 &&+ \int_{t_1}^{t_2}  \int_{\Sigma^{ext}_{\tau} }  \Big(    \frac{1}{2} \Big(  | \derm_t  \Phi + \derm_r \Phi |^2  +  \de^{ij}  | ( \derm_i - \frac{x_i}{r} \derm_{r}  )\Phi |^2 \Big) \cdot \widehat{w}^\prime (q) \cdot d^{n}x  \cdot d\tau  \\
\notag
       &\les &       \int_{\Sigma^{ext}_{t_1} }  |\derm \Phi |^2     \cdot w(q)  \cdot d^{n}x   \\
       \notag
     &&  +   C(q_0) \cdot  c (\delta) \cdot c (\gamma) \cdot E^{ext}  (   \lfloor  \frac{n}{2} \rfloor  +1) \cdot \int_{t_1}^{t_2}   \frac{\eps}{ (1+ t  )^{\frac{3}{2} } }  \cdot  \int_{\Sigma^{ext}_{\tau} }    | \derm \Phi |^2    \cdot \frac{w (q)}{(1+|q|)}  \cdot d^{n}x  \cdot d\tau \\
     \notag
     &&+ \int_{t_1}^{t_2}  \int_{\Sigma^{ext}_{\tau} }   | g^{\mu\a} \derm_{\mu } \derm_\a \Phi | \cdot |  \derm \Phi |    \cdot w(q) \cdot d^{n}x  \cdot d\tau \; .\\
 \eea

 \end{proof}

We now state the following lemma that is an exterior version of an estimate on the commutator term that we showed in \cite{G4}.

 \begin{lemma}\label{estimatethespace-timeintegralofthecommutatortermneededfortheenergyestimate}
For $n\geq 4$\,, let $H$ such that for all time $t$\,, for $\ga \neq 0$ and $ 0 < \la \leq \frac{1}{2}$\,, 
\bea
 \int_{\SSS^{n-1}} \lim_{r \to \infty} \Big( \frac{r^{n-1}}{(1+t+r)^{2-\la} \cdot (1+|q|) } \cdot  w(q) \cdot |H|^2  \Big)  d\si^{n-1} (t ) &=& 0 \; ,
\eea
and let $h$ such that for all time $t$\,, for all $|K| \leq |I|$\,,
\bea
 \int_{\SSS^{n-1}} \lim_{r \to \infty} \Big( \frac{r^{n-1}}{ (1+|q|) }  \cdot  w(q) \cdot  | \Lie_{Z^K} h  |^2  \Big)  d\si^{n-1} (t ) &=& 0 \; ,
\eea
then, for $\de = 0$\,, for either $\Phi = H$ or $\Phi = A$\,, using the bootstrap assumption on $\Phi$\,, we have
   \bea
      \notag
  &&   \int_0^t \Big(  \int_{\Si_{t}^{ext}}  (1+t )^{1+\la}  \cdot  | g^{\a\b} \derm_{\a}   \derm_{\b}   ( \Lie_{ Z^I}   \Phi ) |^2  \cdot w  \cdot dx^1 \ldots dx^n  \Big) \cdot dt  \\
 \notag
&\les&  \int_0^t   \frac{\eps}{(1+t)^{2-\la}}  \cdot  C ( |I| ) \cdot E ( \lfloor \frac{|I|}{2} \rfloor+  \lfloor  \frac{n}{2} \rfloor  +1)  \cdot  c(\ga)  \\
      \notag
&&  \times \Big(  \sum_{|J|\leq |I|}   \int_{\Si_{t}} \big(  | \derm ( \Lie_{Z^{J}} h ) |^2 + |\derm ( \Lie_{Z^{K}} \Phi )  |^2 \big) \cdot   w  \cdot dx^1 \ldots dx^n   \Big) \cdot dt   \\
      \notag
&& + \int_0^t  \Big( \int_{\Si_{t}^{ext}}  (1+t )^{1+\la}  \cdot \sum_{|K| \leq |I| }  | \Lie_{Z^K}  g^{\a\b} \derm_{\a}   \derm_{\b}     \Phi |^2   \cdot w   \cdot dx^1 \ldots dx^n  \Big) \cdot dt  \; . \\
\eea

 \end{lemma}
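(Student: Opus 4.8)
The plan is to import the commutator identity for $g^{\a\b}\derm_\a\derm_\b$ under Minkowski Lie differentiation established in \cite{G4}, and then re-run the corresponding estimate with the weight $w(q)$ and the time factor $(1+t)^{1+\la}$, but now over the exterior slices $\Si^{ext}_\tau$, using the exterior a priori bounds. Since $\derm$ carries the flat Minkowski connection and every $Z\in{\cal Z}$ is Killing or homothetic for $m$, the operators $\Lie_{Z^K}$ commute with $\derm_\a\derm_\b$ and $[\Lie_{Z},\,m^{\a\b}\derm_\a\derm_\b]$ reduces to a multiple of $m^{\a\b}\derm_\a\derm_\b$ acting on lower--order Lie derivatives; writing $g^{\a\b}\derm_\a\derm_\b=m^{\a\b}\derm_\a\derm_\b+H^{\a\b}\derm_\a\derm_\b$ and iterating, \cite{G4} yields a pointwise identity of the schematic form
\[
g^{\a\b}\derm_\a\derm_\b(\Lie_{Z^I}\Phi)=\sum_{|K|\le|I|}c_K\,\Lie_{Z^K}\big(g^{\a\b}\derm_\a\derm_\b\Phi\big)+\sum_{|J|+|K|\le|I|,\ |J|\ge1}c_{JK}\,(\Lie_{Z^J}H)\cdot\derm_\a\derm_\b(\Lie_{Z^K}\Phi)\, .
\]
Here $|J|\ge1$ forces $|K|\le|I|-1$, and in Cartesian coordinates $\derm_\a\derm_\b(\Lie_{Z^K}\Phi)=\derm_\a(\Lie_{\pa_\b}\Lie_{Z^K}\Phi)$ with $\pa_\b\in{\cal Z}$, so every such second derivative is a single $\derm$ of a quantity of order $|K|+1\le|I|$; thus the error is pointwise bounded by $\sum_{|J|+|K'|\le|I|,\,|J|\ge1}|\Lie_{Z^J}H|\,|\derm(\Lie_{Z^{K'}}\Phi)|$.

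Squaring this identity, multiplying by $(1+t)^{1+\la}\,w(q)$ and integrating over $\Si^{ext}_\tau$ and then over $\tau\in[0,t]$, the main terms reproduce exactly the last term on the right--hand side of the Lemma, so it remains to control
\[
\int_0^t\int_{\Si^{ext}_\tau}(1+t)^{1+\la}\sum_{|J|+|K'|\le|I|,\ |J|\ge1}|\Lie_{Z^J}H|^2\,|\derm(\Lie_{Z^{K'}}\Phi)|^2\,w(q)\,d^nx\,d\tau\, .
\]
In each product I place in $L^\infty$ the factor carrying at most $\lfloor\tfrac{|I|}{2}\rfloor+\lfloor\tfrac n2\rfloor+1$ Lie derivatives (at least one of $|J|,|K'|$ satisfies this). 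If it is $\Lie_{Z^J}H$, the bootstrap assumption \eqref{bootstrapexter} with $\de=0$, the exterior Klainerman--Sobolev inequality and \eqref{aprioriestimateonBigHwithLiederivativeLintheexteriorregion} give $|\Lie_{Z^J}H|^2\les C(q_0)\,c(\ga)^2\,C(|I|)^2\,E(\lfloor\tfrac{|I|}{2}\rfloor+\lfloor\tfrac n2\rfloor+1)^2\cdot\tfrac{\eps^2}{(1+t)^{3}}$ — discarding the $(1+|q|)^{-2\ga}$ when $q>0$, and using that in the exterior region $\{q\ge q_0\}$ the quantity $|q|$ is bounded by $|q_0|$ for $q<0$ (which absorbs the extra $(1+|q|)^{1/2}$ of the $q<0$ a priori bound), together with $(1+t+|q|)^{n-1}\ge(1+t)^{n-1}\ge(1+t)^{3}$ since $n\ge4$; hence $(1+t)^{1+\la}|\Lie_{Z^J}H|^2\les\tfrac{\eps}{(1+t)^{2-\la}}$ up to these constants, and the remaining factor $|\derm(\Lie_{Z^{K'}}\Phi)|^2$ is integrated against $w(q)$, which is the energy density. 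If instead the $L^\infty$ factor is $\derm(\Lie_{Z^{K'}}\Phi)$, Lemma \ref{aprioriestimatesongradientoftheLiederivativesofthefields} (for $q>0$) gives $|\derm(\Lie_{Z^{K'}}\Phi)|^2\,w(q)\les\tfrac{\eps^2}{(1+t+|q|)^{n-1}(1+|q|)}$ up to constants, and since $(1+t+|q|)^{n-1}\ge(1+t)^{n-1}\ge(1+t)^{3}$ for $n\ge4$ this is $\les\tfrac{\eps}{(1+t)^{2-\la}}\cdot\tfrac{1}{1+|q|}$ (the $q<0$ part again being harmless because $|q|\le|q_0|$ there).

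It then remains to integrate the retained $L^2$ factor against the weight obtained above and recognize the right--hand side. When the retained factor is $\derm(\Lie_{Z^{K'}}\Phi)$ (integrated against $w(q)$) this is immediate and gives $\int_{\Si^{ext}_\tau}|\derm(\Lie_{Z^{K'}}\Phi)|^2 w$ with $|K'|\le|I|$. When the retained factor is $\Lie_{Z^J}H$, carrying no $\derm$ and integrated against $\tfrac{1}{1+|q|}$, I would use that $\tfrac{1}{1+|q|}\le\tfrac{w(q)}{(1+|q|)^2}$ — for $q>0$ because $\ga>0$, for $q<0$ because $|q|\le|q_0|$ — and apply the exterior Hardy inequality of Corollary \ref{HardytypeinequalityforintegralstartingatROm}: the two hypotheses of the Lemma, namely the vanishing as $r\to\infty$ of $\int_{\SSS^{n-1}}\tfrac{r^{n-1}}{(1+t+r)^{2-\la}(1+|q|)}\,w(q)\,|H|^2$ and of $\int_{\SSS^{n-1}}\tfrac{r^{n-1}}{1+|q|}\,w(q)\,|\Lie_{Z^K}h|^2$, are exactly what makes the boundary term at spatial infinity drop, so $\int_{\Si^{ext}_\tau}\tfrac{w(q)}{(1+|q|)^2}|\Lie_{Z^J}H|^2\,d^nx\les\int_{\Si^{ext}_\tau}|\derm(\Lie_{Z^J}H)|^2 w\,d^nx$, which, since $H=-h+O(h^2)$, is $\les\sum_{|J'|\le|I|}\int_{\Si^{ext}_\tau}|\derm(\Lie_{Z^{J'}}h)|^2 w\,d^nx$ plus lower--order terms handled by the a priori bounds. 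Collecting the gains $\tfrac{\eps}{(1+t)^{2-\la}}$ and the energy integrals yields the stated inequality.

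The main obstacle is the weight bookkeeping of the middle step: for every way the $|I|$ Lie derivatives split between the two factors of an error term, and in both regions $q>0$ and $q<0$, one has to verify that the powers of $(1+t+|q|)$, of $(1+|q|)$ and the profile $w(q)$ really combine — using only $n\ge4$, $0<\la\le\tfrac12$ and the lower bound $q\ge q_0$ in the exterior — into $\tfrac{\eps}{(1+t)^{2-\la}}$ times the weight ($w(q)$ for the $\Phi$ factor, a weight dominated by $\tfrac{w(q)}{(1+|q|)^2}$ for the bare $H$ factor) against which the energy, respectively the exterior Hardy inequality, can be closed. Everything else — the commutator identity, the exterior Klainerman--Sobolev and a priori estimates, and the exterior Hardy inequality — is already available from \cite{G4} and from the earlier parts of this section.
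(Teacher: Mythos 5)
Your proposal is correct and follows essentially the same route as the paper: the paper's own proof of this lemma is a one-line deferral to the commutator calculations of \cite{G4} combined with the exterior Hardy inequality of Corollary \ref{HardytypeinequalityforintegralstartingatROm}, and your write-up (commutator identity, $L^\infty$--$L^2$ splitting via the exterior Klainerman--Sobolev/a priori bounds, extraction of the $\eps(1+t)^{-(2-\la)}$ factor using $n\geq 4$ and $q\geq q_0$, and Hardy to convert the bare $\Lie_{Z^J}H$ factors into $\derm(\Lie_{Z^{J'}}h)$ energy densities) is exactly the argument being invoked there. The hypotheses of the lemma are indeed used only to kill the boundary term at spatial infinity in the Hardy inequality, as you identify.
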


\begin{proof}
Based on our previous calculations in \cite{G4} for $n \geq 4$ for the commutator term, also using the exterior Hardy type inequality that we showed in \cite{G4} and that we re-state here in Corollary \ref{HardytypeinequalityforintegralstartingatROm}), it is straightforward to show the stated exterior estimate on the commutator term. 

\end{proof}

Now, we have the following exterior estimate on the commutator term. 
  \begin{lemma}
For $n\geq 4$, let $H$ such that for all time $t$, for $\ga \neq 0$ and $ 0 < \la \leq \frac{1}{2}$, 
\bea
 \int_{\SSS^{n-1}} \lim_{r \to \infty} \Big( \frac{r^{n-1}}{(1+t+r)^{2-\la} \cdot (1+|q|) } w(q) \cdot |H|^2  \Big)  d\si^{n-1} (t ) &=& 0 \; ,
\eea
and such that that for all $|K| \leq |I| $,
 \bea
\notag
 \int_{\SSS^{n-1}} \lim_{r \to \infty} \Big( \frac{r^{n-1}}{(1+t+r)^{2-\la} \cdot (1+|q|) } \cdot w(q) \cdot   \Big( |   \Lie_{Z^K} A  |^2 + |   \Lie_{Z^K} h  |^2  \Big)  d\si^{n-1} (t ) &=& 0  \; , \\
\eea

then, for $\de = 0$, using the bootstrap assumption on $A$ and $h$, we have
   \bea
      \notag
  &&   \int_0^t \Big(  \int_{\Si_{t}^{ext}}  \frac{(1+t )^{1+\la}}{\eps}  \cdot  | g^{\a\b} \derm_{\a}   \derm_{\b}   ( \Lie_{ Z^I}   A ) |^2  \cdot w  \cdot dx^1 \ldots dx^n  \Big) \cdot dt  \\
        \notag
  && +  \int_0^t \Big(  \int_{\Si_{t}^{ext}}  \frac{(1+t )^{1+\la}}{\eps}  \cdot  | g^{\a\b} \derm_{\a}   \derm_{\b}   ( \Lie_{ Z^I}   h ) |^2  \cdot w  \cdot dx^1 \ldots dx^n  \Big) \cdot dt  \\
 \notag
&\les&  C ( |I| ) \cdot E^{ext} ( \lfloor \frac{|I|}{2} \rfloor+  \lfloor  \frac{n}{2} \rfloor  +1)  \cdot  c(\ga) \cdot \int_0^t   \frac{\eps}{(1+\tau)^{2-\la}}  \cdot  \E_{|I|}^{ext} (\tau)   \cdot d\tau   \\
      \notag
&& + \int_0^t  \Big( \int_{\Si_{t}^{ext}}  \frac{(1+\tau )^{1+\la}}{\eps}  \cdot \sum_{|K| \leq |I| }  | \Lie_{Z^K}  g^{\a\b} \derm_{\a}   \derm_{\b}  A  |^2   \cdot w   \cdot dx^1 \ldots dx^n  \Big) \cdot d\tau   \\
\notag
&& + \int_0^t  \Big( \int_{\Si_{t}^{ext}}  \frac{(1+\tau )^{1+\la}}{\eps}  \cdot \sum_{|K| \leq |I| }  | \Lie_{Z^K}  g^{\a\b} \derm_{\a}   \derm_{\b}   h |^2   \cdot w   \cdot dx^1 \ldots dx^n  \Big) \cdot d\tau  \; . \\
\eea

 \end{lemma}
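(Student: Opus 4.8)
The plan is to deduce this estimate directly from Lemma \ref{estimatethespace-timeintegralofthecommutatortermneededfortheenergyestimate}, applied once with $\Phi = A$ and once with $\Phi = H$, and then to repackage the right-hand side in terms of the exterior energy $\E_{|I|}^{ext}$. First, we record the elementary fact that, since $H^{\mu\nu} = g^{\mu\nu} - m^{\mu\nu}$ and $h^{\mu\nu} = m^{\mu\mu'}m^{\nu\nu'}h_{\mu'\nu'}$ differ only by raising indices with the constant-coefficient Minkowski metric in the fixed wave coordinates, one has for every multi-index $K$ the equivalences $|\derm(\Lie_{Z^K} H)| \sim |\derm(\Lie_{Z^K} h)|$ and $|\Lie_{Z^K} H| \sim |\Lie_{Z^K} h|$, with constants depending only on $n$. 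Hence the two hypotheses imposed in the present statement — the decay-at-spatial-infinity condition on $H$, and the condition on $(A,h)$, the latter being uniform in $|K|\le|I|$ — contain exactly what is needed to invoke Lemma \ref{estimatethespace-timeintegralofthecommutatortermneededfortheenergyestimate} both with $\Phi = A$ and with $\Phi = H$.

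Next, we would apply Lemma \ref{estimatethespace-timeintegralofthecommutatortermneededfortheenergyestimate} with $\Phi = A$ and with $\Phi = H$ and add the two inequalities. The left-hand sides combine into the space-time integral over $\Si_\tau^{ext}$ of $(1+\tau)^{1+\la}\big(|g^{\a\b}\derm_\a\derm_\b(\Lie_{Z^I} A)|^2 + |g^{\a\b}\derm_\a\derm_\b(\Lie_{Z^I} H)|^2\big)\, w$; replacing the $H$-term by the equivalent $h$-term via the first paragraph and dividing through by the fixed constant $\eps$ (equal to $1$ by \eqref{epsequaltoone}) gives the left-hand side of the claimed estimate, while the commutator source terms of the two applications combine and, after the same replacement $H\leftrightarrow h$, give the last two integrals on the right. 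For the middle term, both applications produce the common energy-density factor $\sum_{|J|\le|I|}\int_{\Si_\tau^{ext}}\big(|\derm(\Lie_{Z^J} h)|^2 + |\derm(\Lie_{Z^J} A)|^2\big)\, w\, dx = \sum_{|J|\le|I|}\big(\|w^{1/2}\derm(\Lie_{Z^J} h)\|_{L^2(\Si_\tau^{ext})}^2 + \|w^{1/2}\derm(\Lie_{Z^J} A)\|_{L^2(\Si_\tau^{ext})}^2\big)$; by $\sum_i a_i^2 \le \big(\sum_i a_i\big)^2$ and the definition \eqref{definitionoftheweightedenergy} of $\E_{|I|}^{ext}$ this is at most $\big(\E_{|I|}^{ext}(\tau)\big)^2$, and then the bootstrap assumption \eqref{bootstrapexter} with $\de = 0$ and $\eps = 1$, i.e. $\E_{|I|}^{ext}(\tau)\le E^{ext}(|I|)$, converts it into $E^{ext}(|I|)\cdot\E_{|I|}^{ext}(\tau)$, the factor $E^{ext}(|I|)$ being absorbed into $C(|I|)$ (simply enlarging the constant). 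Identifying the energy constant of Lemma \ref{estimatethespace-timeintegralofthecommutatortermneededfortheenergyestimate} with its exterior counterpart $E^{ext}\big(\lfloor\tfrac{|I|}{2}\rfloor + \lfloor\tfrac{n}{2}\rfloor + 1\big)$ then produces the stated middle term $\int_0^t \tfrac{\eps}{(1+\tau)^{2-\la}}\, C(|I|)\, E^{ext}\big(\lfloor\tfrac{|I|}{2}\rfloor + \lfloor\tfrac{n}{2}\rfloor + 1\big)\, c(\ga)\, \E_{|I|}^{ext}(\tau)\, d\tau$; collecting the three groups of terms yields the claim.

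The one point that requires attention — and the nearest thing to an obstacle here — is the bookkeeping just described: checking that the boundary-at-infinity hypotheses supplied in the present statement are indeed uniform in $|K|\le|I|$ and strong enough for both choices of $\Phi$, and carrying out the passage from the quadratic energy density to the linear $\E_{|I|}^{ext}$ through the bootstrap with the correct constants. Beyond Lemma \ref{estimatethespace-timeintegralofthecommutatortermneededfortheenergyestimate} there is no further analytic content, and all weighted decay factors on the right are precisely those inherited from that lemma.
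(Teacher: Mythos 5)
Your proposal is correct and follows essentially the same route as the paper: the paper's proof is precisely "apply Lemma \ref{estimatethespace-timeintegralofthecommutatortermneededfortheenergyestimate} with $\Phi = A$ and with $\Phi = H$, sum, and identify the energy density with $\E_{|I|}^{ext}$," which is what you do. You are in fact more careful than the paper on two points it glosses over — the passage from $H$ to $h$ (which holds up to harmless quadratic corrections under the bootstrap smallness, not literally by index-raising) and the reduction of the quadratic energy density to the single power of $\E_{|I|}^{ext}$ via the bootstrap bound.
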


\begin{proof}
Using Lemma \ref{estimatethespace-timeintegralofthecommutatortermneededfortheenergyestimate}, we get the desired result. We notice that since the needed estimates on the metric hold for $n \geq 4$ everywhere (i.e. in the interior as well as in the exterior), we do not have dependance on $q_0$, i.e. there is no constant $C(q_0)$. More precisely,

For $n\geq 4$, let $H$ such that for all time $t$, for $\ga \neq 0$ and $ 0 < \la \leq \frac{1}{2}$, 
\bea
 \int_{\SSS^{n-1}} \lim_{r \to \infty} \Big( \frac{r^{n-1}}{(1+t+r)^{2-\la} \cdot (1+|q|) } w(q) \cdot |H|^2  \Big)  d\si^{n-1} (t ) &=& 0 \; ,
\eea
then, for $\de = 0$, for $\Phi = H$ or $\Phi = A$, using the bootstrap assumption on $\Phi$, we have
   \bea
      \notag
  &&   \int_0^t \Big(  \int_{\Si_{t}^{ext}}  \frac{(1+\tau)^{1+\la}}{\eps}  \cdot  | g^{\a\b} \derm_{\a}   \derm_{\b}   ( \Lie_{ Z^I}   \Phi ) |^2  \cdot w  \cdot dx^1 \ldots dx^n  \Big) \cdot d\tau  \\
 \notag
&\les&  \int_0^t   \frac{\eps}{(1+\tau)^{2-\la}}  \cdot  C ( |I| ) \cdot E^{ext} ( \lfloor \frac{|I|}{2} \rfloor+  \lfloor  \frac{n}{2} \rfloor  +1)  \cdot  c(\ga)  \\
      \notag
&& \cdot \Big(  \sum_{|J|\leq |I|}   \int_{\Si_{t}^{ext}} \big(  | \derm ( \Lie_{Z^{J}} h ) |^2 + |\derm ( \Lie_{Z^{K}} \Phi )  |^2 \big) \cdot   w  \cdot dx^1 \ldots dx^n   \Big) \cdot d\tau  \\
      \notag
&& + \int_0^t  \Big( \int_{\Si_{t}^{ext}}  \frac{(1+\tau )^{1+\la}}{\eps}  \cdot \sum_{|K| \leq |I| }  | \Lie_{Z^K}  g^{\a\b} \derm_{\a}   \derm_{\b}     \Phi |^2   \cdot w   \cdot dx^1 \ldots dx^n  \Big) \cdot d\tau \; . \\
\eea

\end{proof}

 \begin{lemma}\label{TheenergyestimateinGronwallformonbothAandhforneq4}
 For $ H^{\mu\nu} = g^{\mu\nu}-m^{\mu\nu}$ satisfying 
\bea
| H| \leq  \frac{1}{n} \; ,
\eea
and such that for $n\geq 4$\,, for $\ga \neq 0$ and $ 0 < \la \leq \frac{1}{2}$\,, 
\bea
 \int_{\SSS^{n-1}} \lim_{r \to \infty} \Big( \frac{r^{n-1}}{(1+t+r)^{2-\la} \cdot (1+|q|) } w(q) \cdot |H|^2  \Big)  d\si^{n-1} (t ) &=& 0 \; ,
\eea
and such that that for all $|K| \leq |I| $,
 \bea
\notag
 \int_{\SSS^{n-1}} \lim_{r \to \infty} \Big( \frac{r^{n-1}}{(1+t+r)^{2-\la} \cdot (1+|q|) } \cdot w(q) \cdot   \Big( |   \Lie_{Z^K} A  |^2 + |   \Lie_{Z^K} h  |^2  \Big)  d\si^{n-1} (t ) &=& 0  \; . \\
\eea
Then, for $\Lie_{Z^J} A $ and $\Lie_{Z^J} h^1$ decaying sufficiently fast at spatial infinity, for $\ga >  0$ and for $ 0 < \la \leq \frac{1}{2}$\,, we have
         \beaa
 &&     {(\E_{|I|}^{ext})}^2 (t_2)    \\
 \notag
       &\les & {(\E_{|I|}^{ext})}^2 (t_1)     +  C ( |I| ) \cdot C(q_0) \cdot  c(\ga) \cdot  E^{ext} ( \lfloor \frac{|I|}{2} \rfloor+  \lfloor  \frac{n}{2} \rfloor  +1)    \cdot \int_{t_1}^{t_2}   \frac{\eps}{ (1+ t  )^{1+\la}  }  \cdot  {(\E_{|I|}^{ext})}^2 (\tau) \cdot d\tau \\
     \notag
&& + C(|I|) \cdot \int_0^t  \Big( \int_{\Si_{t}^{ext}}  \frac{(1+\tau )^{1+\la}}{\eps}  \cdot \sum_{|K| \leq |I| }  | \Lie_{Z^K}  g^{\a\b} \derm_{\a}   \derm_{\b}    A |^2   \cdot w   \cdot dx^1 \ldots dx^n  \Big) \cdot d\tau   \\
      \notag
&& + C(|I|) \cdot  \int_0^t  \Big( \int_{\Si_{t}^{ext}}  \frac{(1+\tau )^{1+\la}}{\eps}  \cdot \sum_{|K| \leq |I| }  | \Lie_{Z^K}  g^{\a\b} \derm_{\a}   \derm_{\b}    h |^2   \cdot w   \cdot dx^1 \ldots dx^n  \Big) \cdot d\tau \; , \\
\eeaa
where
   \beaa
\E_{|I|}^{ext} (\tau) :=  \sum_{|J|\leq |I|} \big( \|w^{1/2}   \derm ( \Lie_{Z^J} h^1   (t,\cdot) )  \|_{L^2 (\Sigma^{ext}_{t} ) } +  \|w^{1/2}   \derm ( \Lie_{Z^J}  A   (t,\cdot) )  \|_{L^2 (\Sigma^{ext}_{t} ) } \big) \, .
\eeaa

 \end{lemma}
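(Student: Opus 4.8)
The plan is to feed the commuted wave equations for $\Lie_{Z^J}A$ and $\Lie_{Z^J}h$ into the weighted exterior energy estimate of Lemma \ref{theenergyestimateforngeq4withoutestimatingthecommutatorterm}, to split the leftover source term $|g^{\mu\a}\derm_\mu\derm_\a\Phi|\cdot|\derm\Phi|$ by a Young inequality with a time-dependent weight, and then to absorb the resulting space-time integral of $\frac{(1+\tau)^{1+\la}}{\eps}\,|g^{\a\b}\derm_\a\derm_\b(\Lie_{Z^J}\Phi)|^2\,w$ by means of the commutator estimate established just above this lemma.

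First I would apply Lemma \ref{theenergyestimateforngeq4withoutestimatingthecommutatorterm} with $\Phi=\Lie_{Z^J}A$ and with $\Phi=\Lie_{Z^J}h$, for every multi-index $|J|\le|I|$; the standing hypotheses $|H|\le\frac1n$, $\ga>0$ and $\mu<0$ (from Definition \ref{defwidehatw}) are in force, and the assumed fast decay of $\Lie_{Z^J}A$ and $\Lie_{Z^J}h^1$ at spatial infinity makes the null-infinity contributions in the underlying divergence theorem vanish. Summing over $|J|\le|I|$ and over the two fields, and using $\|w^{1/2}\derm(\Lie_{Z^J}\Phi)\|_{L^2(\Sigma^{ext}_t)}^2=\int_{\Sigma^{ext}_t}|\derm\Phi|^2\,w$ together with $(\sum_j a_j)^2\ge\sum_j a_j^2$, the left-hand side produces ${(\E_{|I|}^{ext})}^2(t_2)$ plus the nonnegative space-time integral of the tangential derivatives, which I simply discard. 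The right-hand side produces ${(\E_{|I|}^{ext})}^2(t_1)$, a term bounded by $C(q_0)\,c(\delta)\,c(\ga)\,E^{ext}(\lfloor\tfrac n2\rfloor+1)\int_{t_1}^{t_2}\frac{\eps}{(1+\tau)^{3/2}}\,{(\E_{|I|}^{ext})}^2(\tau)\,d\tau$ (using $\frac{w}{1+|q|}\le w$), and the sum over $|J|\le|I|$ and $\Phi\in\{A,h\}$ of $\int_{t_1}^{t_2}\int_{\Sigma^{ext}_\tau}|g^{\mu\a}\derm_\mu\derm_\a(\Lie_{Z^J}\Phi)|\cdot|\derm(\Lie_{Z^J}\Phi)|\,w\,d^nx\,d\tau$.

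Next I would estimate this last source integral by Young's inequality with the time-dependent weight $\frac{\eps}{(1+\tau)^{1+\la}}$, namely
\bea
\notag
&& |g^{\mu\a}\derm_\mu\derm_\a(\Lie_{Z^J}\Phi)|\cdot|\derm(\Lie_{Z^J}\Phi)| \\
\notag
&\le& \frac{1}{2}\,\frac{\eps}{(1+\tau)^{1+\la}}\,|\derm(\Lie_{Z^J}\Phi)|^2 + \frac{1}{2}\,\frac{(1+\tau)^{1+\la}}{\eps}\,|g^{\mu\a}\derm_\mu\derm_\a(\Lie_{Z^J}\Phi)|^2 \; .
\eea
After multiplying by $w$, integrating over $[t_1,t_2]\times\Sigma^{ext}_\tau$ and summing over $|J|\le|I|$ and $\Phi\in\{A,h\}$, the first term is $\les\int_{t_1}^{t_2}\frac{\eps}{(1+\tau)^{1+\la}}{(\E_{|I|}^{ext})}^2(\tau)\,d\tau$; since $0<\la\le\frac12$ forces $1+\la\le\frac32$, the term from the previous paragraph has the same form. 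The second term, after summation, is exactly the left-hand side of the commutator estimate stated immediately above this lemma, whose hypotheses on $H$ and on $\Lie_{Z^K}A,\ \Lie_{Z^K}h$ are among the standing assumptions; inserting that estimate (and using once more that $\sum\int_{\Sigma^{ext}_\tau}|\derm(\Lie_{Z^J}\Phi)|^2 w$ equals ${(\E_{|I|}^{ext})}^2(\tau)$) bounds it by $C(|I|)\,c(\ga)\,E^{ext}(\lfloor\tfrac{|I|}2\rfloor+\lfloor\tfrac n2\rfloor+1)\int_0^t\frac{\eps}{(1+\tau)^{2-\la}}{(\E_{|I|}^{ext})}^2(\tau)\,d\tau$ plus the two inhomogeneous terms $C(|I|)\int_0^t\int_{\Sigma^{ext}_\tau}\frac{(1+\tau)^{1+\la}}{\eps}\sum_{|K|\le|I|}|\Lie_{Z^K}g^{\a\b}\derm_\a\derm_\b A|^2\,w\,dx\,d\tau$ and the same with $h$ in place of $A$. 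Since $2-\la\ge\frac32\ge1+\la$, the first of these is again of Gr\"onwall form, and the second pair is precisely the inhomogeneity in the statement; collecting all Gr\"onwall-type contributions (and absorbing the constants $c(\delta)$, the pure Young constants, etc.\ into $C(|I|)\ge 1$) yields the claimed inequality.

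The only genuinely delicate point is the bookkeeping of the time weights: one must check that $(1+\tau)^{-3/2}$ (coming from the exterior a priori bounds on $H$ that feed Lemma \ref{theenergyestimateforngeq4withoutestimatingthecommutatorterm}) and $(1+\tau)^{-(2-\la)}$ (coming from the commutator estimate) are both dominated by $(1+\tau)^{-(1+\la)}$, which holds precisely because $0<\la\le\frac12$ gives $1+\la\le\frac32\le2-\la$; and that the fast-decay hypotheses on the fields and on $H$ guarantee the vanishing of all boundary terms at null infinity, so that the divergence theorem underpinning Lemma \ref{theenergyestimateforngeq4withoutestimatingthecommutatorterm} and the Hardy-type inequality underpinning the commutator estimate are legitimately applied. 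Everything else is routine summation over the finitely many multi-indices $|J|,|K|\le|I|$ and over the two fields $A$ and $h$.
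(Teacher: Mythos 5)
Your proposal is correct and follows essentially the same route as the paper's own proof: apply Lemma \ref{theenergyestimateforngeq4withoutestimatingthecommutatorterm} to $\Lie_{Z^J}A$ and $\Lie_{Z^J}h$, split the source term $|g^{\mu\a}\derm_\mu\derm_\a\Phi|\cdot|\derm\Phi|$ by Young's inequality with the weight $\frac{(1+\tau)^{1+\la}}{\eps}$, absorb the squared-source piece via the exterior commutator estimate, and collect the Gr\"onwall terms using $1+\la\le\frac32\le 2-\la$. No substantive differences.
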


  \begin{proof}

By taking $\Phi = \Lie_{Z^J} A $ and another $\Phi = \Lie_{Z^J} h^1 $, decaying sufficiently fast at spatial infinity, and using the energy estimate that we have shown in Lemma \ref{theenergyestimateforngeq4withoutestimatingthecommutatorterm}, for $ | H| \leq \frac{1}{n} $, where $n$ is the space dimension, and for $\ga > 0$\,, we have
       \beaa
 &&     \int_{\Sigma^{ext}_{t_2} }  \big( |\derm  ( \Lie_{Z^J} A ) |^2   +  |\derm  ( \Lie_{Z^J} h ) |^2  \big)  \cdot w(q)  \cdot d^{n}x   \\
 \notag
 &&+ \int_{t_1}^{t_2}  \int_{\Sigma^{ext}_{\tau} }  \Big(    \frac{1}{2} \Big(  | \derm_t  ( \Lie_{Z^J} A ) + \derm_r ( \Lie_{Z^J} A ) |^2  +  \de^{ij}  | ( \derm_i - \frac{x_i}{r} \derm_{r}  )( \Lie_{Z^J} A ) |^2  \\
  \notag
 &&+   | \derm_t  ( \Lie_{Z^J} h^1 ) + \derm_r ( \Lie_{Z^J} h ) |^2  +  \de^{ij}  | ( \derm_i - \frac{x_i}{r} \derm_{r}  ) ( \Lie_{Z^J} h ) |^2 \Big) \cdot \frac{\widehat{w} (q)}{(1+|q|)} \cdot d^{n}x  \cdot d\tau  \\
\notag
       &\les &       \int_{\Sigma^{ext}_{t_1} }   \big( |\derm  ( \Lie_{Z^J} A ) |^2   +  |\derm  ( \Lie_{Z^J} h ) |^2  \big)     \cdot w(q)  \cdot d^{n}x   \\
       \notag
     &&  +   C(q_0) \cdot  c (\delta) \cdot c (\gamma) \cdot E^{ext}  (   \lfloor  \frac{n}{2} \rfloor  +1) \\
     && \cdot \int_{t_1}^{t_2}   \frac{\eps}{ (1+ t  )^{\frac{3}{2} } }  \cdot  \int_{\Sigma^{ext}_{\tau} }     \big( |\derm  ( \Lie_{Z^J} A ) |^2   +  |\derm  ( \Lie_{Z^J} h ) |^2  \big)    \cdot \frac{w (q)}{(1+|q|)}  \cdot d^{n}x  \cdot d\tau \\
     \notag
     &&+ \int_{t_1}^{t_2}  \int_{\Sigma^{ext}_{\tau} } \big( \frac{(1+t )^{1+\la}}{\eps} \cdot  | g^{\mu\a} \derm_{\mu } \derm_\a ( \Lie_{Z^J} A ) |^2 + \frac{\eps}{(1+t)^{1+\la}} |  \derm ( \Lie_{Z^J} A ) |^2   \big)  \cdot w(q) \cdot d^{n}x  \cdot d\tau  \\
       \notag
     &&+ \int_{t_1}^{t_2}  \int_{\Sigma^{ext}_{\tau} } \big( \frac{(1+t )^{1+\la}}{\eps} \cdot | g^{\mu\a} \derm_{\mu } \derm_\a ( \Lie_{Z^J} h ) |^2  +\frac{\eps}{(1+t)^{1+\la}}   |  \derm ( \Lie_{Z^J} h ) |^2    \big) \cdot w(q) \cdot d^{n}x  \cdot d\tau \; .\\
 \eeaa
 Hence, for $ 0 < \la \leq \frac{1}{2}$\,,
        \beaa
 &&     {(\E_{|I|}^{ext})}^2 (t_2)    \\
 \notag
       &\les &{(\E_{|I|}^{ext})}^2 (t_1)     +   C(q_0) \cdot  c (\delta) \cdot c (\gamma) \cdot E^{ext}  (   \lfloor  \frac{n}{2} \rfloor  +1)  \cdot \int_{t_1}^{t_2}   \frac{\eps}{ (1+ t  )^{1+\la}  }  \cdot  \int_{\Sigma^{ext}_{\tau} }   {(\E_{|I|}^{ext})}^2 (\tau) \cdot d\tau \\
     \notag
     &&+\sum_{|J|\leq |I|} \int_{t_1}^{t_2}  \int_{\Sigma^{ext}_{\tau} }  \frac{(1+t )^{1+\la}}{\eps} \cdot  | g^{\mu\a} \derm_{\mu } \derm_\a ( \Lie_{Z^J} A ) |^2    \cdot w(q) \cdot d^{n}x  \cdot d\tau  \\
       \notag
     &&+\sum_{|J|\leq |I|}  \int_{t_1}^{t_2}  \int_{\Sigma^{ext}_{\tau} }  \frac{(1+t )^{1+\la}}{\eps} \cdot | g^{\mu\a} \derm_{\mu } \derm_\a ( \Lie_{Z^J} h ) |^2    \cdot w(q) \cdot d^{n}x  \cdot d\tau \; .\\
 \eeaa
 
Now, for $n\geq 4$, let $H$ such that for all time $t$, for $\ga \neq 0$ and $ 0 < \la \leq \frac{1}{2}$, 
\bea
 \int_{\SSS^{n-1}} \lim_{r \to \infty} \Big( \frac{r^{n-1}}{(1+t+r)^{2-\la} \cdot (1+|q|) } w(q) \cdot |H|^2  \Big)  d\si^{n-1} (t ) &=& 0 \; ,
\eea
we get based on the exterior estimate we have established on the commutator term, that

        \beaa
 &&     {(\E_{|I|}^{ext})}^2 (t_2)    \\
 \notag
       &\les & {(\E_{|I|}^{ext})}^2 (t_1)     +   C(q_0) \cdot  c (\delta) \cdot c (\gamma) \cdot E^{ext}  (   \lfloor  \frac{n}{2} \rfloor  +1)  \cdot \int_{t_1}^{t_2}   \frac{\eps}{ (1+ t  )^{1+\la}  }  \cdot  \int_{\Sigma^{ext}_{\tau} }    {(\E_{|I|}^{ext})}^2 (\tau) \cdot d\tau \\
     \notag
&& +  C ( |I| ) \cdot E^{ext} ( \lfloor \frac{|I|}{2} \rfloor+  \lfloor  \frac{n}{2} \rfloor  +1)  \cdot  c(\ga) \cdot \int_0^t   \frac{\eps}{(1+\tau)^{2-\la}}  \cdot  {(\E_{|I|}^{ext})}^2(\tau)   \cdot d\tau   \\
      \notag
&& + \sum_{|K| \leq |I| } \int_0^t  \Big( \int_{\Si_{t}^{ext}}  \frac{(1+\tau )^{1+\la}}{\eps}  \cdot  | \Lie_{Z^K}  g^{\a\b} \derm_{\a}   \derm_{\b}    A |^2   \cdot w   \cdot dx^1 \ldots dx^n  \Big) \cdot d\tau   \\
      \notag
&& +  \sum_{|K| \leq |I| } \int_0^t  \Big( \int_{\Si_{t}^{ext}}  \frac{(1+\tau )^{1+\la}}{\eps}  \cdot   | \Lie_{Z^K}  g^{\a\b} \derm_{\a}   \derm_{\b}    h |^2   \cdot w   \cdot dx^1 \ldots dx^n  \Big) \cdot d\tau \; .  \\
\eeaa
Fixing $\de =0$ and  $ 0 < \la \leq \frac{1}{2}$, we get the result.

 \end{proof}

\subsection{The source terms for $n\geq 4$}\

We proved the following two lemmas in \cite{G4}.

\begin{lemma}
We have
      \beaa
   \notag
 && |  \Lie_{Z^I}   ( g^{\la\mu} \derm_{\la}   \derm_{\mu}   A  )   | \\
   \notag
 &\les&   \sum_{|K|\leq |I|}  \Big( |    \derm (\Lie_{Z^K} A ) | \Big) \cdot E (   \lfloor \frac{|I|}{2} \rfloor + \lfloor  \frac{n}{2} \rfloor  + 1)\\
 \notag
 && \cdot \Big( \Big( \begin{cases}  \frac{\eps }{(1+t+|q|)^{\frac{(n-1)}{2}-\delta} (1+|q|)^{1+\gamma}},\quad\text{when }\quad q>0,\\
           \notag
      \frac{\eps  }{(1+t+|q|)^{\frac{(n-1)}{2}-\delta}(1+|q|)^{\frac{1}{2} }}  \,\quad\text{when }\quad q<0 . \end{cases} \Big) \\
      \notag
      && +   \Big( \begin{cases}  \frac{\eps }{(1+t+|q|)^{\frac{(n-1)}{2}-\delta} (1+|q|)^{\gamma}},\quad\text{when }\quad q>0,\\
           \notag
      \frac{\eps  \cdot (1+|q|)^{\frac{1}{2} } }{(1+t+|q|)^{\frac{(n-1)}{2}-\delta}}  \,\quad\text{when }\quad q<0 . \end{cases} \Big) \\
         \notag
         && + \Big( \begin{cases}  \frac{\eps }{(1+t+|q|)^{\frac{(n-1)}{2}-\delta} (1+|q|)^{\frac{(n-1)}{2}-\delta + 1+2\gamma}},\quad\text{when }\quad q>0,\\
           \notag
      \frac{\eps  }{ (1+t+|q|)^{\frac{(n-1)}{2}-\delta} \cdot (1+ |q|)^{\frac{(n-1)}{2}-\delta}  }  \,\quad\text{when }\quad q<0 . \end{cases} \Big) \\
      \notag
   && +    \Big(  \begin{cases}  \frac{\eps }{(1+t+|q|)^{\frac{(n-1)}{2}-\delta} (1+|q|)^{\frac{(n-1)}{2}-\delta +2\gamma}},\quad\text{when }\quad q>0,\\
           \notag
      \frac{\eps \cdot (1+ |q|)  }{(1+t+|q|)^{\frac{(n-1)}{2}-\delta} \cdot (1+ |q|)^{\frac{(n-1)}{2}-\delta} } \,\quad\text{when }\quad q<0 . \end{cases} \Big) \Big) 
               \notag
  \eeaa
    \beaa
   \notag
 &&  + \sum_{|K|\leq |I|}  \Big( |   \Lie_{Z^K} A  | \Big) \cdot E (   \lfloor \frac{|I|}{2} \rfloor + \lfloor  \frac{n}{2} \rfloor  + 1) \\
 \notag
 &&  \cdot \Big( \Big(  \begin{cases}  \frac{\eps }{(1+t+|q|)^{\frac{(n-1)}{2}-\delta} (1+|q|)^{\frac{(n-1)}{2}-\delta+1+2\gamma}},\quad\text{when }\quad q>0,\\
           \notag
      \frac{\eps  }{(1+t+|q|)^{\frac{(n-1)}{2}-\delta} \cdot ( 1+|q| )^{\frac{(n-1)}{2}-\delta} } \,\quad\text{when }\quad q<0 . \end{cases} \Big) \\
      \notag 
      && + \Big(  \begin{cases}  \frac{\eps }{(1+t+|q|)^{\frac{(n-1)}{2}-\delta} (1+|q|)^{1+\gamma}},\quad\text{when }\quad q>0,\\
           \notag
      \frac{\eps  }{(1+t+|q|)^{\frac{(n-1)}{2}-\delta}(1+|q|)^{\frac{1}{2} }}  \,\quad\text{when }\quad q<0 . \end{cases} \Big) \\
         \notag
       && +  \Big(  \begin{cases}  \frac{\eps }{(1+t+|q|)^{\frac{(n-1)}{2}-\delta} (1+|q|)^{\frac{(n-1)}{2}-\delta + 2\gamma}},\quad\text{when }\quad q>0,\\
           \notag
      \frac{\eps \cdot (1+ |q|)  }{(1+t+|q|)^{\frac{(n-1)}{2}-\delta } \cdot (1+|q|)^{\frac{(n-1)}{2}-\delta}} \,\quad\text{when }\quad q<0 . \end{cases} \Big)  \\
               \notag
     && +             \Big(  \begin{cases}  \frac{\eps }{(1+t+|q|)^{\frac{(n-1)}{2}-\delta} \cdot (1+|q|)^{(n-1)-2\delta + 1+3\gamma}},\quad\text{when }\quad q>0,\\
           \notag
      \frac{\eps  \cdot(1+|q|)^{\frac{1}{2} } }{(1+t+|q|)^{\frac{(n-1)}{2}-\delta} \cdot (1+|q|)^{(n-1)-2\delta}}  \,\quad\text{when }\quad q<0 . \end{cases} \Big)  \\
      \notag
  &&     +    \Big(  \begin{cases}  \frac{\eps }{(1+t+|q|)^{\frac{(n-1)}{2}-\delta} (1+|q|)^{\frac{(n-1)}{2}-\delta +1+2\gamma}},\quad\text{when }\quad q>0,\\
           \notag
      \frac{\eps  }{(1+t+|q|)^{\frac{(n-1)}{2}-\delta} \cdot (1+|q|)^{\frac{(n-1)}{2}-\delta}}  \,\quad\text{when }\quad q<0 . \end{cases} \Big)  \\
      \notag
  && +      \Big(  \begin{cases}  \frac{\eps }{(1+t+|q|)^{\frac{(n-1)}{2}-\delta} (1+|q|)^{(n-1)-2\delta + 3\gamma}},\quad\text{when }\quad q>0,\\
           \notag
      \frac{\eps \cdot (1+|q|)^{\frac{3}{2} }  }{(1+t+|q|)^{\frac{(n-1)}{2}-\delta} \cdot (1+|q|)^{(n-1)-2\delta}}  \,\quad\text{when }\quad q<0 . \end{cases} \Big) \Big)
               \notag
  \eeaa
        \beaa
   \notag
&&  + \Big( \sum_{|K|\leq |I|}   |    \derm (\Lie_{Z^K} h ) | \Big) \cdot E (   \lfloor \frac{|I|}{2} \rfloor + \lfloor  \frac{n}{2} \rfloor  + 1)  \\
 \notag
 && \cdot  \Big( \Big( \begin{cases}  \frac{\eps }{(1+t+|q|)^{\frac{(n-1)}{2}-\delta} (1+|q|)^{1+\gamma}},\quad\text{when }\quad q>0,\\
           \notag
      \frac{\eps  }{(1+t+|q|)^{\frac{(n-1)}{2}-\delta}(1+|q|)^{\frac{1}{2} }}  \,\quad\text{when }\quad q<0 . \end{cases} \Big) \\
      \notag
    && +   \Big(  \begin{cases}  \frac{\eps }{(1+t+|q|)^{\frac{(n-1)}{2}-\delta} (1+|q|)^{\frac{(n-1)}{2}-\delta + 2\gamma}},\quad\text{when }\quad q>0,\\
           \notag
      \frac{\eps \cdot (1+ |q|)  }{(1+t+|q|)^{\frac{(n-1)}{2}-\delta} \cdot (1+ |q|)^{\frac{(n-1)}{2}-\delta} }  \,\quad\text{when }\quad q<0 . \end{cases} \Big) \\
      \notag
&& +        \Big( \begin{cases}  \frac{\eps }{(1+t+|q|)^{\frac{(n-1)}{2}-\delta} (1+|q|)^{\frac{(n-1)}{2}-\delta+1+2\gamma}},\quad\text{when }\quad q>0,\\
           \notag
      \frac{\eps  }{(1+t+|q|)^{\frac{(n-1)}{2}-\delta} \cdot (1+|q|)^{\frac{(n-1)}{2}-\delta}  } ,\quad\text{when }\quad q<0 . \end{cases} \Big) \\
     \notag
     && + \Big( \begin{cases}  \frac{\eps }{(1+t+|q|)^{\frac{(n-1)}{2}-\delta} (1+|q|)^{(n-1) - 2\de + 3\gamma}},\quad\text{when }\quad q>0,\\
           \notag
      \frac{\eps \cdot (1+|q|)^{\frac{3}{2} }  }{(1+t+|q|)^{\frac{(n-1)}{2}-\delta} \cdot (1+|q|)^{(n-1)-2\delta}}  \,\quad\text{when }\quad q<0 . \end{cases} \Big) \Big) 
               \notag
  \eeaa
       \beaa
   \notag
 && + \Big( \sum_{|K|\leq |I|}   |   \Lie_{Z^K} h  | \Big) \cdot E (   \lfloor \frac{|I|}{2} \rfloor + \lfloor  \frac{n}{2} \rfloor  + 1)  \\
 \notag
 && \cdot \Big( \Big(  \begin{cases}  \frac{\eps }{(1+t+|q|)^{(n-1)-2\delta} (1+|q|)^{2+2\gamma}},\quad\text{when }\quad q>0,\\
           \notag
      \frac{\eps  }{(1+t+|q|)^{(n-1)-2\delta}(1+|q|)}  \,\quad\text{when }\quad q<0 . \end{cases} \Big)  \\
      \notag
  && +      \Big( \begin{cases}  \frac{\eps }{(1+t+|q|)^{(n-1)-2\delta} \cdot (1+|q|)^{\frac{(n-1)}{2}-\delta + 1+3\gamma}},\quad\text{when }\quad q>0,\\
           \notag
      \frac{\eps  \cdot(1+|q|)^{\frac{1}{2} } }{(1+t+|q|)^{(n-1)-2\delta} \cdot (1+|q|)^{\frac{(n-1)}{2}-\delta} }  \,\quad\text{when }\quad q<0 . \end{cases} \Big)  \\
      \notag
   && +   \Big(  \begin{cases}  \frac{\eps }{(1+t+|q|)^{(n-1)-2\delta} (1+|q|)^{1+2\gamma}},\quad\text{when }\quad q>0,\\
           \notag
      \frac{\eps  }{(1+t+|q|)^{(n-1)-2\delta}}  \,\quad\text{when }\quad q<0 . \end{cases} \Big) \\
      \notag
   && +    \Big(  \begin{cases}  \frac{\eps }{(1+t+|q|)^{(n-1)-2\delta} (1+|q|)^{\frac{(n-1)}{2}-\delta +3\gamma}},\quad\text{when }\quad q>0,\\
           \notag
      \frac{\eps \cdot (1+|q|)^{\frac{3}{2} }  }{(1+t+|q|)^{(n-1)-2\delta} \cdot (1+ |q|)^{\frac{(n-1)}{2}-\delta} }  \,\quad\text{when }\quad q<0 . \end{cases} \Big) \Big) \; .
               \notag
  \eeaa

\end{lemma}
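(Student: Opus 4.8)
The plan is to read off the result from the pointwise Leibniz expansion of the source term already recorded in \eqref{liederivativeindirectionofMinkowskiforsourcetermsforA}. Using the wave equation for $A$ in the Lorenz gauge and in wave coordinates established in \cite{G4}, the quantity $\Lie_{Z^I}(g^{\la\mu}\derm_{\la}\derm_{\mu}A)$ equals $\Lie_{Z^I}$ of the corresponding source term, which, after distributing the Lie derivatives, is a finite sum of products of the form $\derm(\Lie_{Z^J}h)\cdot\derm(\Lie_{Z^K}A)$, $\Lie_{Z^J}A\cdot\derm(\Lie_{Z^K}A)$, $\Lie_{Z^J}A\cdot\Lie_{Z^K}A\cdot\Lie_{Z^L}A$, and the higher-order products $\Lie_{Z^J}h\cdot\derm(\Lie_{Z^K}h)\cdot\derm(\Lie_{Z^L}A)$, $\Lie_{Z^J}h\cdot\Lie_{Z^K}A\cdot\derm(\Lie_{Z^L}A)$, etc., with $|J|+|K|+|L|+\dots\leq|I|$. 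Each such product already appears, term by term, in \cite{G4}; the work here is only to substitute the \emph{exterior} a priori estimates into it.

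First I would, in every product, single out the distinguished factor carrying the largest number of $Z$-derivatives — this is the factor that survives on the right-hand side of the stated lemma as $|\derm(\Lie_{Z^K}A)|$, $|\Lie_{Z^K}A|$, $|\derm(\Lie_{Z^K}h)|$, or $|\Lie_{Z^K}h|$ — and leave it untouched. By the usual pigeonhole argument, every remaining factor then carries at most $\lfloor|I|/2\rfloor$ derivatives, so it can be bounded pointwise: the gradient factors $\derm(\Lie_{Z^J}A)$, $\derm(\Lie_{Z^J}h)$ by Lemma \ref{aprioriestimatesongradientoftheLiederivativesofthefields}, the zeroth-order factors $\Lie_{Z^J}A$, $\Lie_{Z^J}h$ by Lemma \ref{aprioriestimatefrombootstraponzerothderivativeofAandh1}, and the metric-coefficient factors (expressing $h$ in terms of $H=g-m$) by \eqref{aprioriestimateonBigHwithLiederivativeL} and \eqref{aprioriestimateondermcovariantderivativeofBigHwithLiederivativeL}. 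Multiplying the resulting decay rates, taking $\de=0$, and summing over the finitely many admissible index splittings (which costs only the combinatorial factor $C(|I|)$ and the higher-order energy constant $E(\lfloor|I|/2\rfloor+\lfloor n/2\rfloor+1)$) reproduces, group by group, the four bracketed sums displayed in the lemma. The two alternatives $q>0$ and $q<0$ are obtained by selecting, in every factor of a given product, the corresponding branch of the a priori estimate; for $q<0$ the $(1+|q|)^{1/2}$ factors coming from the interior bounds on $A$, $h$ and $H$ accumulate, which explains the powers of $(1+|q|)$ appearing in the numerators.

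I expect the only real obstacle to be the bookkeeping: \eqref{liederivativeindirectionofMinkowskiforsourcetermsforA} contains several structurally distinct products (quadratic $\derm h\cdot\derm A$ and $A\cdot\derm A$, cubic $\derm h\cdot A^2$ and $A^3$, and the higher terms $h\cdot\derm h\cdot\derm A$, $h\cdot\derm h\cdot A^2$, $h\cdot A\cdot\derm A$, $h\cdot A^3$), and in each of them the distinguished highest-order factor must be rotated through all positions while the complementary decay rates are multiplied out and matched against the right shape. Since \cite{G4} already performs exactly this expansion for the analogous global statement, it remains only to check that every estimate invoked is valid in the exterior region $\overline C$ under the exterior bootstrap assumption \eqref{bootstrapexter} — which it is, because the exterior Klainerman–Sobolev inequality and the exterior a priori estimates of Lemmas \ref{aprioriestimatesongradientoftheLiederivativesofthefields}–\ref{aprioriestimatefrombootstraponzerothderivativeofAandh1} hold with the very same rates as their global counterparts — and then to read off the final sum, which is precisely the displayed inequality.
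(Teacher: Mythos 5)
Your proposal is correct and follows exactly the argument the paper relies on: the paper itself gives no in-text proof of this lemma (it is stated as a recapitulation, ``We proved the following two lemmas in \cite{G4}''), and the cited proof is precisely the Leibniz expansion of the source term as in \eqref{liederivativeindirectionofMinkowskiforsourcetermsforA}, followed by the pigeonhole selection of the highest-order factor and substitution of the pointwise a priori decay estimates (Lemmas \ref{aprioriestimatesongradientoftheLiederivativesofthefields} and \ref{aprioriestimatefrombootstraponzerothderivativeofAandh1} and the bounds \eqref{aprioriestimateondermcovariantderivativeofBigHwithLiederivativeL}--\eqref{aprioriestimateonBigHwithLiederivativeL}) into the remaining factors, branch by branch in $q>0$ and $q<0$. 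Your reconstruction matches this term by term, so nothing further is needed.
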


\begin{lemma}
We have

\beaa
   \notag
 &&  | \Lie_{Z^I}   ( g^{\la\mu} \derm_{\la}   \derm_{\mu}    h ) |   \\
\notag
 &\les &\Big( \sum_{|K|\leq |I|}    | \derm ( \Lie_{Z^K} A) |\Big) \cdot  E (   \lfloor \frac{|I|}{2} \rfloor + \lfloor  \frac{n}{2} \rfloor  + 1) \\
 \notag
 &&\cdot  \Big(  \Big(\begin{cases}  \frac{\eps }{(1+t+|q|)^{\frac{(n-1)}{2}-\delta} (1+|q|)^{1+\gamma}},\quad\text{when }\quad q>0,\\
           \notag
      \frac{\eps  }{(1+t+|q|)^{\frac{(n-1)}{2}-\delta}(1+|q|)^{\frac{1}{2} }}  \,\quad\text{when }\quad q<0 . \end{cases} \Big) \\
      \notag
      && + \Big( \begin{cases}  \frac{\eps }{(1+t+|q|)^{\frac{(n-1)}{2}-\delta} (1+|q|)^{\frac{(n-1)}{2}-\delta +2\gamma}},\quad\text{when }\quad q>0,\\
           \notag
      \frac{\eps \cdot (1+ |q|)  }{(1+t+|q|)^{\frac{(n-1)}{2}-\delta} \cdot (1+|q|)^{\frac{(n-1)}{2}-\delta}}  \,\quad\text{when }\quad q<0 . \end{cases} \Big) \\
      \notag
    &&  +  \Big( \begin{cases}  \frac{\eps }{(1+t+|q|)^{\frac{(n-1)}{2}-\delta} (1+|q|)^{\frac{(n-1)}{2}-\delta+1+2\gamma}},\quad\text{when }\quad q>0,\\
           \notag
      \frac{\eps  }{(1+t+|q|)^{\frac{(n-1)}{2}-\delta} \cdot (1+|q|)^{\frac{(n-1)}{2}-\delta}}  \,\quad\text{when }\quad q<0 . \end{cases} \Big) \\
      \notag
   && + \Big(  \begin{cases}  \frac{\eps }{(1+t+|q|)^{\frac{(n-1)}{2}-\delta} (1+|q|)^{(n-1)-2\delta +3\gamma}},\quad\text{when }\quad q>0,\\
           \notag
      \frac{\eps \cdot (1+|q|)^{\frac{3}{2} }  }{(1+t+|q|)^{\frac{(n-1)}{2}-\delta} \cdot (1+|q|)^{(n-1)-2\delta}}  \,\quad\text{when }\quad q<0 . \end{cases} \Big) \Big) 
               \notag
\eeaa

\beaa
\notag
 &&+ \Big( \sum_{|K|\leq |I|}    |  \Lie_{Z^K} A | \Big) \cdot E (   \lfloor \frac{|I|}{2} \rfloor + \lfloor  \frac{n}{2} \rfloor  + 1)   \\
 \notag
 &&   \cdot \Big(   \Big(  \begin{cases}  \frac{\eps }{(1+t+|q|)^{(n-1)-2\delta} (1+|q|)^{1+2\gamma}},\quad\text{when }\quad q>0,\\
           \notag
      \frac{\eps  }{(1+t+|q|)^{(n-1)-2\delta}}  \,\quad\text{when }\quad q<0 . \end{cases} \Big) \\
      \notag
  &&  +   \Big(  \begin{cases}  \frac{\eps }{(1+t+|q|)^{(n-1)-2\delta} (1+|q|)^{\frac{(n-1)}{2}-\delta + 3\gamma}},\quad\text{when }\quad q>0,\\
           \notag
      \frac{\eps \cdot (1+|q|)^{\frac{3}{2} }  }{(1+t+|q|)^{(n-1)-2\delta } \cdot (1+|q|)^{\frac{(n-1)}{2}-\delta }}  \,\quad\text{when }\quad q<0 . \end{cases} \Big) \\
               \notag
 &&   +   \Big( \begin{cases}  \frac{\eps }{(1+t+|q|)^{(n-1)-2\delta} (1+|q|)^{\frac{(n-1)}{2}-\delta +1+3\gamma}},\quad\text{when }\quad q>0,\\
           \notag
      \frac{\eps  \cdot(1+|q|)^{\frac{1}{2} } }{(1+t+|q|)^{(n-1)-2\delta} \cdot (1+|q|)^{\frac{(n-1)}{2}-\delta}}  \,\quad\text{when }\quad q<0 . \end{cases} \Big)  \\
      \notag
   &&  + \Big(  \begin{cases}  \frac{\eps }{(1+t+|q|)^{(n-1)-2\delta} \cdot (1+|q|)^{(n-1)-2\delta +4\gamma}},\quad\text{when }\quad q>0,\\
           \notag
      \frac{\eps \cdot (1+|q|)^{2 }  }{(1+t+|q|)^{(n-1)-2\delta} \cdot (1+|q|)^{(n-1)-2\delta} }  \,\quad\text{when }\quad q<0 . \end{cases} \Big) \Big)  
               \notag
 \eeaa
 
\beaa
\notag
 &&+ \Big( \sum_{|K|\leq |I|}    | \derm(  \Lie_{Z^K} h ) | \Big) \cdot E (   \lfloor \frac{|I|}{2} \rfloor + \lfloor  \frac{n}{2} \rfloor  + 1) \\
 \notag
 && \cdot \Big(   \Big(  \begin{cases}  \frac{\eps }{(1+t+|q|)^{\frac{(n-1)}{2}-\delta} (1+|q|)^{1+\gamma}},\quad\text{when }\quad q>0,\\
 \notag
      \frac{\eps  }{(1+t+|q|)^{\frac{(n-1)}{2}-\delta}(1+|q|)^{\frac{1}{2} }}  \,\quad\text{when }\quad q<0 . \end{cases} \Big) \\
      \notag
   && + \Big(  \begin{cases}  \frac{\eps }{(1+t+|q|)^{\frac{(n-1)}{2}-\delta} (1+|q|)^{\frac{(n-1)}{2}-\delta+1+2\gamma}},\quad\text{when }\quad q>0,\\
           \notag
      \frac{\eps  }{(1+t+|q|)^{\frac{(n-1)}{2}-\delta} \cdot (1+|q|)^{\frac{(n-1)}{2}-\delta}}  \,\quad\text{when }\quad q<0 . \end{cases} \Big) \Big) 
      \notag
 \eeaa

\beaa
\notag
 &&+ \Big(   \sum_{|K|\leq |I|}    |   \Lie_{Z^K} h  |\Big) \cdot  E (   \lfloor \frac{|I|}{2} \rfloor + \lfloor  \frac{n}{2} \rfloor  + 1) \\
 \notag
   &&\Big(   \Big(  \begin{cases}  \frac{\eps }{(1+t+|q|)^{(n-1)-2\delta} (1+|q|)^{2+2\gamma}},\quad\text{when }\quad q>0,\\
           \notag
      \frac{\eps  }{(1+t+|q|)^{(n-1)-2\delta}(1+|q|)}  \,\quad\text{when }\quad q<0 . \end{cases} \Big) \\
      \notag
      && + \Big(  \begin{cases}  \frac{\eps }{(1+t+|q|)^{(n-1)-2\delta} (1+|q|)^{\frac{(n-1)}{2}-\delta+1+3\gamma}},\quad\text{when }\quad q>0,\\
           \notag
      \frac{\eps  \cdot(1+|q|)^{\frac{1}{2} } }{(1+t+|q|)^{(n-1)-2\delta} \cdot (1+|q|)^{\frac{(n-1)}{2}-\delta}}  \,\quad\text{when }\quad q<0 . \end{cases} \Big)  \\
      \notag
  &&  + \Big(  \begin{cases}  \frac{\eps }{(1+t+|q|)^{(n-1)-2\delta} \cdot (1+|q|)^{(n-1)-2\delta +4\gamma}},\quad\text{when }\quad q>0,\\
           \notag
      \frac{\eps \cdot (1+|q|)^{2 }  }{(1+t+|q|)^{(n-1)-2\delta} \cdot (1+|q|)^{(n-1)-2\delta} }  \,\quad\text{when }\quad q<0 . \end{cases} \Big) \Big)  \; .\\
               \notag
 \eeaa

\end{lemma}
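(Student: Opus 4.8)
The plan is to start from the explicit expression for $g^{\la\mu}\derm_\la\derm_\mu h_{\mu\nu}$ recalled in the introduction, which exhibits this quantity as a finite sum of the semilinear terms $P(\derm_\mu h,\derm_\nu h)$, the null forms $Q_{\mu\nu}(\derm h,\derm h)$, the coefficient-cubic terms $G_{\mu\nu}(h)(\derm h,\derm h)$, the Yang--Mills curvature contractions $<\derm_\mu A_\b-\derm_\b A_\mu,\,\cdot\,>$ together with their $m_{\mu\nu}$-trace counterparts, the mixed terms of schematic type $\derm A\cdot[A,A]$, the quartic term $[A,A]\cdot[A,A]$, and the higher-order remainders $O(h\cdot(\derm A)^2)$, $O(h\cdot A^2\cdot\derm A)$, $O(h\cdot A^4)$. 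One applies $\Lie_{Z^I}$ to this identity, distributes it by the Leibniz rule for Lie derivatives, and then estimates each resulting term by assigning the top order to a single ``$L^2$ factor'' and bounding every remaining low-order factor pointwise by the a priori estimates.

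More precisely, I would first use that every $Z\in{\cal Z}$ is a Killing or conformal Killing field of $m$, so that $[\Lie_Z,\derm]$ applied to a tensor is a zeroth-order operator of the same schematic type; hence $\Lie_{Z^I}$ of a product of factors drawn from $\{h,\,\derm h,\,A,\,\derm A,\,H,\,\derm H\}$ equals, up to such lower-order rearrangements, a sum over partitions $|J_1|+\dots+|J_k|\le|I|$ of products of $\Lie_{Z^{J_a}}$ of those same factors. For the $G_{\mu\nu}(h)(\derm h,\derm h)$ and $O(h\cdot\cdots)$ terms one also expands $G$ and the remainders in their convergent Taylor series in $H$, which is legitimate since $|H|<1/n$, so that again one is reduced to products of Lie derivatives of $h$, $A$, $\derm h$, $\derm A$. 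In each such product I designate the factor carrying the largest multi-index as the $L^2$ factor, leaving it as a bare $|\derm\Lie_{Z^K}A|$, $|\Lie_{Z^K}A|$, $|\derm\Lie_{Z^K}h|$ or $|\Lie_{Z^K}h|$ with $|K|\le|I|$; every remaining factor then carries a multi-index $\le\lfloor|I|/2\rfloor$ and is estimated pointwise by Lemma \ref{aprioriestimatesongradientoftheLiederivativesofthefields} (for factors $\derm\Lie_{Z^J}A$, $\derm\Lie_{Z^J}h$) or by Lemma \ref{aprioriestimatefrombootstraponzerothderivativeofAandh1} (for factors $\Lie_{Z^J}A$, $\Lie_{Z^J}h$), at level $\lfloor|I|/2\rfloor+\lfloor n/2\rfloor+1$. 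Collecting these contributions according to the type of the $L^2$ factor reproduces the four groups in the statement: a single power $(1+t+|q|)^{(n-1)/2-\delta}$ multiplying the weight-dependent $q$-factors comes from products with one factor estimated through Lemma \ref{aprioriestimatefrombootstraponzerothderivativeofAandh1}, a power $(1+t+|q|)^{(n-1)-2\delta}$ from two such factors, and so on, with the bootstrap constant absorbed into $E(\lfloor|I|/2\rfloor+\lfloor n/2\rfloor+1)$.

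The step that needs genuine care --- and the one I expect to be the main obstacle --- is the extraction of the sharp $q$-weights, which is not brute force but relies on the null and gauge structure of the Einstein--Yang--Mills system. The bilinear forms $P(\derm h,\derm h)$, $Q_{\mu\nu}(\derm h,\derm h)$ and the curvature contractions $<\derm_\mu A_\b-\derm_\b A_\mu,\,\cdot\,>$ are decomposed in the null frame $\{\,\pa_t+\pa_r,\ \pa_t-\pa_r,\ e_A\,\}$: each of them pairs a generic derivative with a good (tangential or $\pa_t+\pa_r$) derivative, and a good derivative falling on a $Z$-differentiated quantity gains a factor $(1+|q|)/(1+t+|q|)$, while the term $P(\derm h,\derm h)$ additionally profits from the wave-coordinate condition (which forces the bad component of $h$ to decay faster). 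Tracking these gains term by term is what upgrades the $q$-exponents from their naive values to $(1+|q|)^{1+\gamma}$, $(1+|q|)^{(n-1)/2-\delta+1+2\gamma}$, $(1+|q|)^{(n-1)-2\delta+4\gamma}$ and so on in the $q>0$ branches, and to the corresponding $(1+|q|)^{1/2}$-type improvements for $q<0$. Once this bookkeeping is complete, summing the finitely many schematic contributions and absorbing all numerical constants into $C(|I|)$ yields the stated estimate; the preceding lemma, for $g^{\la\mu}\derm_\la\derm_\mu A_\si$, is proved in exactly the same way, starting from the explicit form of that wave operator.
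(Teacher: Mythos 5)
First, a point of comparison: the paper does not actually prove this lemma here --- it is introduced by ``We proved the following two lemmas in \cite{G4}'' --- so there is no in-paper argument to measure your proposal against. That said, your first two paragraphs describe what is clearly the intended argument: apply $\Lie_{Z^I}$ to the reduced wave equation for $h$, distribute by the Leibniz rule (using that the $Z$'s interact with $\derm$ only through lower-order terms of the same schematic type), expand $G_{\mu\nu}(h)$ and the $O(h\cdot\dots)$ remainders in $H$, keep the highest-order factor of each product as the bare $|\derm(\Lie_{Z^K}\cdot)|$ or $|\Lie_{Z^K}\cdot|$, and bound every remaining factor --- which then carries at most $\lfloor |I|/2\rfloor$ vector fields --- pointwise by Lemmas \ref{aprioriestimatesongradientoftheLiederivativesofthefields} and \ref{aprioriestimatefrombootstraponzerothderivativeofAandh1}. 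This matches the structure of the right-hand side, including the appearance of the constant $E(\lfloor |I|/2\rfloor+\lfloor n/2\rfloor+1)$.

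Your third paragraph, however, misattributes the origin of the $q$-weights, and this is the one place where your plan could lead you astray. You present a null-frame decomposition of $P$, $Q$ and the curvature contractions, plus the wave-coordinate condition, as the ``main obstacle'' needed to upgrade the exponents to the stated values. In fact every weight in the statement is nothing more than (an upper bound for) the direct product of the pointwise a priori bounds on the low-order factors: for instance the factor $\frac{\eps}{(1+t+|q|)^{(n-1)/2-\delta}(1+|q|)^{1+\gamma}}$ multiplying $|\derm(\Lie_{Z^K}h)|$ is exactly the bound of Lemma \ref{aprioriestimatesongradientoftheLiederivativesofthefields} on a single low-order gradient factor, and the factor $\frac{\eps}{(1+t+|q|)^{(n-1)-2\delta}(1+|q|)^{2+2\gamma}}$ multiplying $|\Lie_{Z^K}h|$ is the square of that bound, coming from $G(h)(\derm h,\derm h)$ with $h$ placed in top order; the remaining exponents are likewise obtained by weakening $(1+t+|q|)^{-(n-1)/2+\delta}$ to $(1+|q|)^{-(n-1)/2+\delta}$ in such products. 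No tangential-derivative gain or wave-coordinate cancellation is used, and none is visible in the exponents; indeed the introduction emphasizes that precisely because no such gain is available for the commutator terms carrying undifferentiated $A$, the argument only closes in the exterior region for $n=4$. So the lemma is pure bookkeeping on top of the a priori estimates; if you set out to extract null-structure gains you would be attempting to prove a stronger statement than the one asserted, and for the commutator terms that attempt would fail.
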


Now, we look at the case, where $n\geq4$. 
 
\begin{lemma}
For $n \geq 4$, we have
 \beaa
 \notag
&& \frac{(1+t )}{\eps} \cdot |  \Lie_{Z^I}   ( g^{\la\mu} \derm_{\la}   \derm_{\mu}   A  )   |^2 \, \\
   \notag
   &\les&  \sum_{|K|\leq |I|}  \Big( |    \derm (\Lie_{Z^K} A ) |^2 +  |    \derm (\Lie_{Z^K} h ) |^2 \Big) \cdot E (   \lfloor \frac{|I|}{2} \rfloor + \lfloor  \frac{n}{2} \rfloor  + 1)\\
 \notag
   && \cdot \Big(  \begin{cases}  \frac{\eps }{(1+t+|q|)^{2-2\delta} (1+|q|)^{2\gamma}},\quad\text{when }\quad q>0,\\
           \notag
      \frac{\eps  }{(1+t+|q|)^{2-2\delta} \cdot ( 1+|q| )^{-1} } \,\quad\text{when }\quad q<0 , \end{cases} \Big) 
      \notag 
   \eeaa
     \beaa
     \notag
   && + \sum_{|K|\leq |I|}  \Big( |   \Lie_{Z^K} A  |^2 +|   \Lie_{Z^K} h  |^2 \Big) \cdot E (   \lfloor \frac{|I|}{2} \rfloor + \lfloor  \frac{n}{2} \rfloor  + 1) \\
 \notag
   && \cdot \Big(  \begin{cases}  \frac{\eps }{(1+t+|q|)^{2-2\delta} (1+|q|)^{2+2\gamma}},\quad\text{when }\quad q>0,\\
           \notag
      \frac{\eps  }{(1+t+|q|)^{2-2\delta} \cdot ( 1+|q| )^{1-2\de} } \,\quad\text{when }\quad q<0 . \end{cases} \Big) \; .
      \notag 
   \eeaa

\end{lemma}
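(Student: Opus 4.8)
The plan is to obtain this estimate as an immediate corollary of the pointwise bound for $|\Lie_{Z^I}(g^{\la\mu}\derm_\la\derm_\mu A)|$ proved in the preceding lemma, by squaring that bound and multiplying through by $(1+t)/\eps$. Write the right-hand side of that lemma schematically as $\sum_{g}F_g\cdot E\cdot\big(\sum_i d_{g,i}\big)$, where $g$ ranges over the four families
\[
F_g\in\big\{\,|\derm(\Lie_{Z^K}A)|\,,\ |\Lie_{Z^K}A|\,,\ |\derm(\Lie_{Z^K}h)|\,,\ |\Lie_{Z^K}h|\,\big\},\qquad |K|\leq|I|,
\]
where $E=E(\lfloor|I|/2\rfloor+\lfloor n/2\rfloor+1)$, and where each $d_{g,i}$ is one of the explicit decay factors listed there (each carrying a single power of $\eps$). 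First I would square, using $(\sum_j a_j)^2\leq C\sum_j a_j^2$ on the diagonal and a weighted Young inequality $ab\leq\tfrac{\la}{2}a^2+\tfrac1{2\la}b^2$ on the off-diagonal products $F_gF_{g'}$; this reduces the claim to a term-by-term bound for quantities of the form $F_g^2\cdot E^2\cdot\tfrac{1+t}{\eps}\cdot d_{g,i}d_{g',j}$, in which $d_{g,i}d_{g',j}$ carries $\eps^2$ so that exactly one factor of $\eps$ survives. When $F_g$ and $F_{g'}$ are of the same type one bounds $d_{g,i}d_{g',j}$ by the square of the slower-decaying of the two factors; when they are of mixed type one chooses the Young weight so that the gradient square $|\derm\Phi|^2$ is paired with the square of the slower-decaying of $d_{g,i},d_{g',j}$ — admissible for the first (gradient) group of the statement, which carries only $(1+|q|)^{2\gamma}$ in the denominator for $q>0$ — while the undifferentiated square $|\Phi|^2$ is paired with the square of the faster-decaying one, admissible for the second (zeroth-order) group, which carries $(1+|q|)^{2+2\gamma}$.

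The only genuine inequality used is elementary, and it is exactly here that the hypothesis $n\geq4$ is needed: since $(n-1)-2\delta-(2-2\delta)=n-3\geq1$ and $1+t\leq1+t+|q|$,
\[
\frac{1+t}{(1+t+|q|)^{(n-1)-2\delta}}\ \leq\ \frac{1}{(1+t+|q|)^{2-2\delta}}\,.
\]
Every product $d_{g,i}d_{g',j}$ carries at least $(1+t+|q|)^{-((n-1)-2\delta)}$ in its denominator, so this estimate converts the surplus weight $1+t$ into the target power $(1+t+|q|)^{-(2-2\delta)}$; it then remains only to match the residual powers of $(1+|q|)$. For $q>0$ the borderline case is the "troublesome'' quadratic contribution $|A|\,|\derm A|$ singled out in the introduction: there the two decay factors reduce, after the $(1+t+|q|)$-power has been extracted, to $(1+|q|)^{-2\gamma}$ and $(1+|q|)^{-(2+2\gamma)}$ respectively, which match the two groups exactly, and every remaining product $d_{g,i}d_{g',j}$ carries strictly more $(1+|q|)$-decay. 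For $q<0$ the factors instead carry positive powers of $(1+|q|)$ — at most $(1+|q|)^{1/2}$ from the quadratic terms and $(1+|q|)^{3/2}$, against an extra $(1+|q|)^{-((n-1)-2\delta)}$, from the cubic $h$-terms — and one checks that these are dominated by the $(1+|q|)^{1}$ allowed in the gradient group and the $(1+|q|)^{1-2\delta}$ allowed in the zeroth-order group. Collecting the constants, and absorbing the surplus power of the energy constant $E$ into the implied constant, then gives the stated inequality.

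I expect the entire difficulty to be organizational: pushing the many sub-terms of the preceding lemma through the squaring and the Young splitting, and verifying, case by case and separately in the regions $q>0$ and $q<0$, that each resulting term lands in the gradient bin or the zeroth-order bin with an admissible $(1+|q|)$-weight. There is no substantive obstacle — the role of $n\geq4$ is precisely to furnish the one extra power of $(1+t+|q|)$ that absorbs the factor $1+t$ — and the companion estimate in which $A$ is replaced by $h$ follows in the same way from the corresponding source-term lemma.
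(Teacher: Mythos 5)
Your proposal is correct and follows essentially the same route as the paper: the paper likewise squares the pointwise source-term bound of the preceding lemma term by term, absorbs the factor $(1+t)$ using the surplus power $(n-1)-2\delta-(2-2\delta)=n-3\geq 1$ available for $n\geq 4$, and then reduces each group to its worst-decaying $(1+|q|)$-factor separately for $q>0$ and $q<0$. Your explicit Young-inequality treatment of the off-diagonal products is only a more careful spelling-out of the bookkeeping the paper leaves implicit.
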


\begin{proof}

For $n \geq 4$, we examine one by one the terms in $ \frac{(1+t )}{\eps} \cdot |  \Lie_{Z^I}   ( g^{\la\mu} \derm_{\la}   \derm_{\mu}   A  )   |^2 $, we get

\beaa
   \notag
 &&   \sum_{|K|\leq |I|}  \Big( |    \derm (\Lie_{Z^K} A ) |^2 \Big) \cdot E (   \lfloor \frac{|I|}{2} \rfloor + \lfloor  \frac{n}{2} \rfloor  + 1)\\
 \notag
 && \cdot \Big( \begin{cases}  \frac{\eps }{(1+t+|q|)^{2-2\delta} (1+|q|)^{2+2\gamma}},\quad\text{when }\quad q>0,\\
           \notag
      \frac{\eps  }{(1+t+|q|)^{2-2\delta}(1+|q|)}  \,\quad\text{when }\quad q<0 . \end{cases} \Big) \\
      \notag
      && +   \Big( \begin{cases}  \frac{\eps }{(1+t+|q|)^{2-2\delta} (1+|q|)^{2\gamma}},\quad\text{when }\quad q>0,\\
           \notag
      \frac{\eps  \cdot (1+|q|)^{ } }{(1+t+|q|)^{2-2\delta}}  \,\quad\text{when }\quad q<0 . \end{cases} \Big) \\
         \notag
         && + \Big( \begin{cases}  \frac{\eps }{(1+t+|q|)^{2-2\delta} (1+|q|)^{5 + 2(\ga-\delta) +2\gamma}},\quad\text{when }\quad q>0,\\
           \notag
      \frac{\eps  }{ (1+t+|q|)^{2-2\delta} \cdot (1+ |q|)^{3-2\delta}  }  \,\quad\text{when }\quad q<0 . \end{cases} \Big) \\
      \notag
   && +    \Big(  \begin{cases}  \frac{\eps }{(1+t+|q|)^{2-2\delta} (1+|q|)^{3+2(\ga-\delta)+2\gamma}},\quad\text{when }\quad q>0,\\
           \notag
      \frac{\eps \cdot (1+ |q|)  }{(1+t+|q|)^{2-2\delta} \cdot (1+ |q|)^{3-2\delta} } \,\quad\text{when }\quad q<0 . \end{cases} \Big) \Big) 
               \notag
  \eeaa

     \beaa
     \notag
   &\les&  \sum_{|K|\leq |I|}  \Big( |    \derm (\Lie_{Z^K} A ) |^2 \Big) \cdot E (   \lfloor \frac{|I|}{2} \rfloor + \lfloor  \frac{n}{2} \rfloor  + 1)\\
 \notag
   && \cdot \Big(  \begin{cases}  \frac{\eps }{(1+t+|q|)^{2-2\delta} (1+|q|)^{2\gamma}},\quad\text{when }\quad q>0,\\
           \notag
      \frac{\eps  }{(1+t+|q|)^{2-2\delta} \cdot ( 1+|q| )^{-1} } \,\quad\text{when }\quad q<0 . \end{cases} \Big) \;.
      \notag 
   \eeaa

And,
       \beaa
   \notag
 &&   \sum_{|K|\leq |I|}  \Big( |   \Lie_{Z^K} A  |^2 \Big) \cdot E (   \lfloor \frac{|I|}{2} \rfloor + \lfloor  \frac{n}{2} \rfloor  + 1) \\
 \notag
 &&  \cdot \Big( \Big(  \begin{cases}  \frac{\eps }{(1+t+|q|)^{2-2\delta} (1+|q|)^{5+2(\ga-\delta)+2\gamma}},\quad\text{when }\quad q>0,\\
           \notag
      \frac{\eps  }{(1+t+|q|)^{2-2\delta} \cdot ( 1+|q| )^{3-2\delta} } \,\quad\text{when }\quad q<0 . \end{cases} \Big) \\
      \notag 
      && + \Big(  \begin{cases}  \frac{\eps }{(1+t+|q|)^{2-2\delta} (1+|q|)^{2+2\gamma}},\quad\text{when }\quad q>0,\\
           \notag
      \frac{\eps  }{(1+t+|q|)^{2-2\delta}(1+|q|)^{ }}  \,\quad\text{when }\quad q<0 . \end{cases} \Big) \\
         \notag
       && +  \Big(  \begin{cases}  \frac{\eps }{(1+t+|q|)^{2-2\delta} (1+|q|)^{3+2(\ga-\delta)+ 2\gamma}},\quad\text{when }\quad q>0,\\
           \notag
      \frac{\eps \cdot (1+ |q|)^2  }{(1+t+|q|)^{2-2\delta } \cdot (1+|q|)^{3-2\delta}} \,\quad\text{when }\quad q<0 . \end{cases} \Big)  \\
               \notag
     && +             \Big(  \begin{cases}  \frac{\eps }{(1+t+|q|)^{2-2\delta} \cdot (1+|q|)^{8+4(\ga-\delta) +2\gamma}},\quad\text{when }\quad q>0,\\
           \notag
      \frac{\eps  \cdot(1+|q|)^{ } }{(1+t+|q|)^{2-2\delta} \cdot (1+|q|)^{6-4\delta}}  \,\quad\text{when }\quad q<0 . \end{cases} \Big)  \\
      \notag
  &&     +    \Big(  \begin{cases}  \frac{\eps }{(1+t+|q|)^{2-2\delta} (1+|q|)^{5+2(\ga-\delta) +2\gamma}},\quad\text{when }\quad q>0,\\
           \notag
      \frac{\eps  }{(1+t+|q|)^{2-2\delta} \cdot (1+|q|)^{3-2\delta}}  \,\quad\text{when }\quad q<0 . \end{cases} \Big)  \\
      \notag
  && +      \Big(  \begin{cases}  \frac{\eps }{(1+t+|q|)^{2-2\delta} (1+|q|)^{6+4(\ga-\delta) + 2\gamma}},\quad\text{when }\quad q>0,\\
           \notag
      \frac{\eps \cdot (1+|q|)^{3 }  }{(1+t+|q|)^{2-2\delta} \cdot (1+|q|)^{6-4\delta}}  \,\quad\text{when }\quad q<0 . \end{cases} \Big) 
               \notag
  \eeaa
 
     \beaa
     \notag
   &\les&  \sum_{|K|\leq |I|}  \Big( |   \Lie_{Z^K} A  |^2 \Big) \cdot E (   \lfloor \frac{|I|}{2} \rfloor + \lfloor  \frac{n}{2} \rfloor  + 1) \\
 \notag
   && \cdot \Big(  \begin{cases}  \frac{\eps }{(1+t+|q|)^{2-2\delta} (1+|q|)^{2+2\gamma}},\quad\text{when }\quad q>0,\\
           \notag
      \frac{\eps  }{(1+t+|q|)^{2-2\delta} \cdot ( 1+|q| )^{1-2\de} } \,\quad\text{when }\quad q<0 . \end{cases} \Big) 
      \notag 
   \eeaa
   
       (where we used the fact that $\ga \geq \delta$).

 And,
 
           \beaa
   \notag
&&   \Big( \sum_{|K|\leq |I|}   |    \derm (\Lie_{Z^K} h ) |^2 \Big) \cdot E (   \lfloor \frac{|I|}{2} \rfloor + \lfloor  \frac{n}{2} \rfloor  + 1)  \\
 \notag
 && \cdot  \Big( \Big( \begin{cases}  \frac{\eps }{(1+t+|q|)^{2-2\delta} (1+|q|)^{2+2\gamma}},\quad\text{when }\quad q>0,\\
           \notag
      \frac{\eps  }{(1+t+|q|)^{2-2\delta}(1+|q|)^{ }}  \,\quad\text{when }\quad q<0 . \end{cases} \Big) \\
      \notag
    && +   \Big(  \begin{cases}  \frac{\eps }{(1+t+|q|)^{2-2\delta} (1+|q|)^{3+2(\ga-\delta) + 2\gamma}},\quad\text{when }\quad q>0,\\
           \notag
      \frac{\eps \cdot (1+ |q|)^2  }{(1+t+|q|)^{2-2\delta} \cdot (1+ |q|)^{3-2\delta} }  \,\quad\text{when }\quad q<0 . \end{cases} \Big) \\
      \notag
&& +        \Big( \begin{cases}  \frac{\eps }{(1+t+|q|)^{2-2\delta} (1+|q|)^{5+2(\ga-\delta)+2\gamma}},\quad\text{when }\quad q>0,\\
           \notag
      \frac{\eps  }{(1+t+|q|)^{2-2\delta} \cdot (1+|q|)^{3-2\delta}  } ,\quad\text{when }\quad q<0 . \end{cases} \Big) \\
     \notag
     && + \Big( \begin{cases}  \frac{\eps }{(1+t+|q|)^{2-2\delta} (1+|q|)^{6 +4(\ga-\delta)+ 2\gamma}},\quad\text{when }\quad q>0,\\
           \notag
      \frac{\eps \cdot (1+|q|)^{3 }  }{(1+t+|q|)^{2-2\delta} \cdot (1+|q|)^{6-4\delta}}  \,\quad\text{when }\quad q<0 . \end{cases} \Big) \Big) 
               \notag
  \eeaa
 
      \beaa
     \notag
   &\les&  \Big( \sum_{|K|\leq |I|}   |    \derm (\Lie_{Z^K} h ) |^2 \Big) \cdot E (   \lfloor \frac{|I|}{2} \rfloor + \lfloor  \frac{n}{2} \rfloor  + 1)  \\
 \notag
   && \cdot \Big(  \begin{cases}  \frac{\eps }{(1+t+|q|)^{2-2\delta} (1+|q|)^{2+2\gamma}},\quad\text{when }\quad q>0,\\
           \notag
      \frac{\eps  }{(1+t+|q|)^{2-2\delta} \cdot ( 1+|q| )^{1-2\de} } \,\quad\text{when }\quad q<0 . \end{cases} \Big) 
      \notag 
   \eeaa
      (using the fact that $\ga \geq \delta$).

Also,
  
       \beaa
   \notag
 &&  \Big( \sum_{|K|\leq |I|}   |   \Lie_{Z^K} h  |^2 \Big) \cdot E (   \lfloor \frac{|I|}{2} \rfloor + \lfloor  \frac{n}{2} \rfloor  + 1)  \\
 \notag
 && \cdot \Big( \Big(  \begin{cases}  \frac{\eps }{(1+t+|q|)^{5-4\delta} (1+|q|)^{4+4\gamma}},\quad\text{when }\quad q>0,\\
           \notag
      \frac{\eps  }{(1+t+|q|)^{5-4\delta}(1+|q|)^{2}}  \,\quad\text{when }\quad q<0 . \end{cases} \Big)  \\
      \notag
  && +      \Big( \begin{cases}  \frac{\eps }{(1+t+|q|)^{5-4\delta} \cdot (1+|q|)^{5+2(\ga-\delta) +4\gamma}},\quad\text{when }\quad q>0,\\
           \notag
      \frac{\eps  \cdot(1+|q|)^{ } }{(1+t+|q|)^{5-4\delta} \cdot (1+|q|)^{3-2\delta} }  \,\quad\text{when }\quad q<0 . \end{cases} \Big)  \\
      \notag
   && +   \Big(  \begin{cases}  \frac{\eps }{(1+t+|q|)^{5-4\delta} (1+|q|)^{2+4\gamma}},\quad\text{when }\quad q>0,\\
           \notag
      \frac{\eps  }{(1+t+|q|)^{5-4\delta}}  \,\quad\text{when }\quad q<0 . \end{cases} \Big) \\
      \notag
   && +    \Big(  \begin{cases}  \frac{\eps }{(1+t+|q|)^{5-4\delta} (1+|q|)^{3+2(\ga-\delta) +4\gamma}},\quad\text{when }\quad q>0,\\
           \notag
      \frac{\eps \cdot (1+|q|)^{3 }  }{(1+t+|q|)^{5-4\delta} \cdot (1+ |q|)^{3-2\delta} }  \,\quad\text{when }\quad q<0 . \end{cases} \Big) \Big) \; .
               \notag
  \eeaa
  
     \beaa
     \notag
   &\les&  \Big( \sum_{|K|\leq |I|}   |   \Lie_{Z^K} h  |^2 \Big) \cdot E (   \lfloor \frac{|I|}{2} \rfloor + \lfloor  \frac{n}{2} \rfloor  + 1)   \\
 \notag
   && \cdot \Big(  \begin{cases}  \frac{\eps }{(1+t+|q|)^{5-4\delta} (1+|q|)^{2+4\gamma}},\quad\text{when }\quad q>0,\\
           \notag
      \frac{\eps  }{(1+t+|q|)^{5-4\delta} (1+|q|)^{-2\de} } \,\quad\text{when }\quad q<0 . \end{cases} \Big) \;.
      \notag 
   \eeaa
   
\end{proof}

\begin{lemma}
For $n \geq 4$, 
\beaa
   \notag
 &&  \frac{(1+t )}{\eps} \cdot  | \Lie_{Z^I}   ( g^{\la\mu} \derm_{\la}   \derm_{\mu}    h ) |^2   \\
     \notag
   &\les&  \sum_{|K|\leq |I|}  \Big( |    \derm (\Lie_{Z^K} A ) |^2 + |    \derm (\Lie_{Z^K} h ) |^2 \Big) \cdot E (   \lfloor \frac{|I|}{2} \rfloor + \lfloor  \frac{n}{2} \rfloor  + 1)\\
 \notag
   && \cdot \Big(  \begin{cases}  \frac{\eps }{(1+t+|q|)^{2-2\delta} (1+|q|)^{2+2\gamma}},\quad\text{when }\quad q>0,\\
           \notag
      \frac{\eps  }{(1+t+|q|)^{2-2\delta} \cdot ( 1+|q| )^{1-2\de} } \,\quad\text{when }\quad q<0 . \end{cases} \Big) \\
      \notag 
   &&+  \sum_{|K|\leq |I|}  \Big( |   \Lie_{Z^K} A  |^2 + |   \Lie_{Z^K} h  |^2 \Big) \cdot E (   \lfloor \frac{|I|}{2} \rfloor + \lfloor  \frac{n}{2} \rfloor  + 1)\\
 \notag
   && \cdot \Big(  \begin{cases}  \frac{\eps }{(1+t+|q|)^{5-4\delta} (1+|q|)^{2+4\gamma}},\quad\text{when }\quad q>0,\\
           \notag
      \frac{\eps  }{(1+t+|q|)^{5-4\delta} (1+|q|)^{-2\de} } \,\quad\text{when }\quad q<0 . \end{cases} \Big) \; .
      \notag 
   \eeaa
   
\end{lemma}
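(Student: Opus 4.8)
The plan is to mirror, almost verbatim, the proof of the preceding lemma that estimated $\frac{(1+t)}{\eps}\cdot|\Lie_{Z^I}(g^{\la\mu}\derm_\la\derm_\mu A)|^2$, now applied to the pointwise bound on $|\Lie_{Z^I}(g^{\la\mu}\derm_\la\derm_\mu h)|$ recorded in the lemma just above. First I would start from that pointwise bound, which writes $|\Lie_{Z^I}(g^{\la\mu}\derm_\la\derm_\mu h)|$ as a finite sum — with a number of summands bounded in terms of $|I|$ and $n$ — of four kinds of contributions, each of the form (a single field factor, one of $|\derm(\Lie_{Z^K}A)|$, $|\Lie_{Z^K}A|$, $|\derm(\Lie_{Z^K}h)|$, $|\Lie_{Z^K}h|$) times $E(\lfloor|I|/2\rfloor+\lfloor n/2\rfloor+1)$ times a case-defined weight in $(t,q)$. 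Since the sum has boundedly many terms, $\bigl(\sum_j a_j\bigr)^2\les\sum_j a_j^2$, so it suffices to square each individual contribution, bound it, and then multiply by $\frac{(1+t)}{\eps}$.

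The second step is the elementary weight algebra. Using $(1+t)\le(1+t+|q|)$, one absorbs the extra power of $(1+t)$ into the denominators coming from squaring the weights; for instance an exterior weight $\frac{\eps}{(1+t+|q|)^{\frac{(n-1)}{2}-\delta}(1+|q|)^{1+\gamma}}$ becomes, after squaring and multiplying by $\frac{(1+t)}{\eps}$, at most $\frac{\eps}{(1+t+|q|)^{(n-1)-2\delta-1}(1+|q|)^{2+2\gamma}}$, which for $n=4$ is exactly the target weight $\frac{\eps}{(1+t+|q|)^{2-2\delta}(1+|q|)^{2+2\gamma}}$ and for $n>4$ is dominated by it. The interior $q<0$ weights are handled the same way; there the factors $(1+|q|)^{1/2}$, $(1+|q|)$, $(1+|q|)^{3/2}$ appearing in the numerators get squared, and after the $(1+t)\le(1+t+|q|)$ trade and the use of $n\ge4$ one checks term by term that they remain controlled by the target interior weight $\frac{\eps}{(1+t+|q|)^{2-2\delta}(1+|q|)^{1-2\delta}}$ for the $\derm A$, $\derm h$ families, and by $\frac{\eps}{(1+t+|q|)^{5-4\delta}(1+|q|)^{2+4\gamma}}$ (resp. $\frac{\eps}{(1+t+|q|)^{5-4\delta}(1+|q|)^{-2\delta}}$) for the $A$, $h$ families; the only exponent inputs needed are $n\ge4$ (so $\frac{(n-1)}{2}-\delta\ge\frac32-\delta$, hence $(n-1)-2\delta-1\ge2-2\delta$) and $\ga\ge\delta$ (so all the surviving powers of $(1+|q|)^{2(\ga-\delta)}$ are harmless).

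Finally I would group the resulting estimate into the four families indexed by $|\derm(\Lie_{Z^K}A)|^2$, $|\Lie_{Z^K}A|^2$, $|\derm(\Lie_{Z^K}h)|^2$, $|\Lie_{Z^K}h|^2$, bound every weight inside a family by the claimed dominant one, and then merge the two gradient families (both with weight $\frac{\eps}{(1+t+|q|)^{2-2\delta}(1+|q|)^{2+2\gamma}}$, resp. $\frac{\eps}{(1+t+|q|)^{2-2\delta}(1+|q|)^{1-2\delta}}$) into $\bigl(|\derm(\Lie_{Z^K}A)|^2+|\derm(\Lie_{Z^K}h)|^2\bigr)$, and the two zero-order families (both with the faster-decaying weight $\frac{\eps}{(1+t+|q|)^{5-4\delta}(1+|q|)^{2+4\gamma}}$, resp. $\frac{\eps}{(1+t+|q|)^{5-4\delta}(1+|q|)^{-2\delta}}$) into $\bigl(|\Lie_{Z^K}A|^2+|\Lie_{Z^K}h|^2\bigr)$; this is legitimate because the $A$-contributions multiplying a given field factor always carry at least the decay of the $h$-contributions multiplying the same factor, and conversely. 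The main obstacle is purely the bookkeeping — one must traverse the (four-times-many) weights in the pointwise bound in both the $q>0$ and $q<0$ regions and verify each exponent inequality — but there is no new idea beyond monotonicity in $n$ and the observation $(1+|q|)\le(1+t+|q|)$; the only mildly delicate points are the interior weights where powers of $(1+|q|)$ sit in the numerator, and these are exactly the ones already treated in the preceding lemma for $A$, hence fine for $\delta=0$.
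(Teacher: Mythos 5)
Your proposal is correct and follows essentially the same route as the paper's own proof: square the pointwise bound on $|\Lie_{Z^I}(g^{\la\mu}\derm_\la\derm_\mu h)|$ from the preceding source-term lemma, multiply by $\frac{(1+t)}{\eps}$, absorb the extra $(1+t)$ via $(1+t)\le(1+t+|q|)$, and verify term by term (using $n\ge4$ and $\ga\ge\de$) that each resulting weight is dominated by the stated one for the gradient and zero-order families respectively. No gaps.
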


\begin{proof}

For $n \geq 4$, we examine the terms in $\frac{(1+t )}{\eps} \cdot  | \Lie_{Z^I}   ( g^{\la\mu} \derm_{\la}   \derm_{\mu}    h ) |^2 $, one by one. We have

\beaa
   \notag
&& \Big( \sum_{|K|\leq |I|}    | \derm ( \Lie_{Z^K} A) |^2 \Big) \cdot  E (   \lfloor \frac{|I|}{2} \rfloor + \lfloor  \frac{n}{2} \rfloor  + 1) \\
 \notag
 &&\cdot  \Big(  \Big(\begin{cases}  \frac{\eps }{(1+t+|q|)^{2-2\delta} (1+|q|)^{2+2\gamma}},\quad\text{when }\quad q>0,\\
           \notag
      \frac{\eps  }{(1+t+|q|)^{2-2\delta}(1+|q|)^{ }}  \,\quad\text{when }\quad q<0 . \end{cases} \Big) \\
      \notag
      && + \Big( \begin{cases}  \frac{\eps }{(1+t+|q|)^{2-2\delta} (1+|q|)^{3+2(\ga-\delta) +2\gamma}},\quad\text{when }\quad q>0,\\
           \notag
      \frac{\eps \cdot (1+ |q|)^2  }{(1+t+|q|)^{2-2\delta} \cdot (1+|q|)^{3-2\delta}}  \,\quad\text{when }\quad q<0 . \end{cases} \Big) \\
      \notag
    &&  +  \Big( \begin{cases}  \frac{\eps }{(1+t+|q|)^{2-2\delta} (1+|q|)^{5+2(\ga-\delta)+2\gamma}},\quad\text{when }\quad q>0,\\
           \notag
      \frac{\eps  }{(1+t+|q|)^{2-2\delta} \cdot (1+|q|)^{3-2\delta}}  \,\quad\text{when }\quad q<0 . \end{cases} \Big) \\
      \notag
   && + \Big(  \begin{cases}  \frac{\eps }{(1+t+|q|)^{2-2\delta} (1+|q|)^{6+4(\ga-\delta) +2\gamma}},\quad\text{when }\quad q>0,\\
           \notag
      \frac{\eps \cdot (1+|q|)^{3 }  }{(1+t+|q|)^{2-2\delta} \cdot (1+|q|)^{6-4\delta}}  \,\quad\text{when }\quad q<0 . \end{cases} \Big) \Big) 
               \notag
\eeaa

     \beaa
     \notag
   &\les&  \sum_{|K|\leq |I|}  \Big( |    \derm (\Lie_{Z^K} A ) |^2 \Big) \cdot E (   \lfloor \frac{|I|}{2} \rfloor + \lfloor  \frac{n}{2} \rfloor  + 1)\\
 \notag
   && \cdot \Big(  \begin{cases}  \frac{\eps }{(1+t+|q|)^{2-2\delta} (1+|q|)^{2+2\gamma}},\quad\text{when }\quad q>0,\\
           \notag
      \frac{\eps  }{(1+t+|q|)^{2-2\delta} \cdot ( 1+|q| )^{1-2\de} } \,\quad\text{when }\quad q<0 . \end{cases} \Big) \; .
      \notag 
   \eeaa

And,
 \beaa
\notag
 && \Big( \sum_{|K|\leq |I|}    |  \Lie_{Z^K} A |^2 \Big) \cdot E (   \lfloor \frac{|I|}{2} \rfloor + \lfloor  \frac{n}{2} \rfloor  + 1)   \\
 \notag
 &&   \cdot \Big(   \Big(  \begin{cases}  \frac{\eps }{(1+t+|q|)^{5-4\delta} (1+|q|)^{2+4\gamma}},\quad\text{when }\quad q>0,\\
           \notag
      \frac{\eps  }{(1+t+|q|)^{5-4\delta}}  \,\quad\text{when }\quad q<0 . \end{cases} \Big) \\
      \notag
  &&  +   \Big(  \begin{cases}  \frac{\eps }{(1+t+|q|)^{5-4\delta} (1+|q|)^{3+2(\ga-\delta) + 4\gamma}},\quad\text{when }\quad q>0,\\
           \notag
      \frac{\eps \cdot (1+|q|)^{3 }  }{(1+t+|q|)^{5-4\delta } \cdot (1+|q|)^{3-2\delta }}  \,\quad\text{when }\quad q<0 . \end{cases} \Big) \\
               \notag
 &&   +   \Big( \begin{cases}  \frac{\eps }{(1+t+|q|)^{5-4\delta} (1+|q|)^{5+2(\ga-\delta) +4\gamma}},\quad\text{when }\quad q>0,\\
           \notag
      \frac{\eps  \cdot(1+|q|)^{ } }{(1+t+|q|)^{5-4\delta} \cdot (1+|q|)^{3-2\delta}}  \,\quad\text{when }\quad q<0 . \end{cases} \Big)  \\
      \notag
   &&  + \Big(  \begin{cases}  \frac{\eps }{(1+t+|q|)^{5-4\delta} \cdot (1+|q|)^{6+4(\ga-\delta) +4\gamma}},\quad\text{when }\quad q>0,\\
           \notag
      \frac{\eps \cdot (1+|q|)^{4 }  }{(1+t+|q|)^{5-4\delta} \cdot (1+|q|)^{6-4\delta} }  \,\quad\text{when }\quad q<0 . \end{cases} \Big) \Big)  
               \notag
 \eeaa

     \beaa
     \notag
   &\les&  \sum_{|K|\leq |I|}  \Big( |   \Lie_{Z^K} A  |^2 \Big) \cdot E (   \lfloor \frac{|I|}{2} \rfloor + \lfloor  \frac{n}{2} \rfloor  + 1)\\
 \notag
   && \cdot \Big(  \begin{cases}  \frac{\eps }{(1+t+|q|)^{5-4\delta} (1+|q|)^{2+4\gamma}},\quad\text{when }\quad q>0,\\
           \notag
      \frac{\eps  }{(1+t+|q|)^{5-4\delta} (1+|q|)^{-2\de} } \,\quad\text{when }\quad q<0 . \end{cases} \Big) \; .
      \notag 
   \eeaa

And,
 \beaa
\notag
 && \Big( \sum_{|K|\leq |I|}    | \derm(  \Lie_{Z^K} h ) |^2 \Big) \cdot E (   \lfloor \frac{|I|}{2} \rfloor + \lfloor  \frac{n}{2} \rfloor  + 1) \\
 \notag
 && \cdot \Big(   \Big(  \begin{cases}  \frac{\eps }{(1+t+|q|)^{2-2\delta} (1+|q|)^{2+2\gamma}},\quad\text{when }\quad q>0,\\
 \notag
      \frac{\eps  }{(1+t+|q|)^{2-2\delta}(1+|q|)^{ }}  \,\quad\text{when }\quad q<0 . \end{cases} \Big) \\
      \notag
   && + \Big(  \begin{cases}  \frac{\eps }{(1+t+|q|)^{2-2\delta} (1+|q|)^{4+2(\ga-\delta)+2\gamma}},\quad\text{when }\quad q>0,\\
           \notag
      \frac{\eps  }{(1+t+|q|)^{2-2\delta} \cdot (1+|q|)^{3-2\delta}}  \,\quad\text{when }\quad q<0 . \end{cases} \Big) \Big) 
      \notag
 \eeaa

     \beaa
     \notag
   &\les&  \sum_{|K|\leq |I|}  \Big( |    \derm (\Lie_{Z^K} h ) |^2 \Big) \cdot E (   \lfloor \frac{|I|}{2} \rfloor + \lfloor  \frac{n}{2} \rfloor  + 1)\\
 \notag
   && \cdot \Big(  \begin{cases}  \frac{\eps }{(1+t+|q|)^{2-2\delta} (1+|q|)^{2+2\gamma}},\quad\text{when }\quad q>0,\\
           \notag
      \frac{\eps  }{(1+t+|q|)^{2-2\delta} \cdot ( 1+|q| )^{} } \,\quad\text{when }\quad q<0 . \end{cases} \Big) \; .
      \notag 
   \eeaa

Also,
 \beaa
\notag
 && \Big(   \sum_{|K|\leq |I|}    |   \Lie_{Z^K} h  |^2 \Big) \cdot  E (   \lfloor \frac{|I|}{2} \rfloor + \lfloor  \frac{n}{2} \rfloor  + 1) \\
 \notag
   &&\Big(   \Big(  \begin{cases}  \frac{\eps }{(1+t+|q|)^{5-4\delta} (1+|q|)^{4+4\gamma}},\quad\text{when }\quad q>0,\\
           \notag
      \frac{\eps  }{(1+t+|q|)^{5-4\delta}(1+|q|)^2}  \,\quad\text{when }\quad q<0 . \end{cases} \Big) \\
      \notag
      && + \Big(  \begin{cases}  \frac{\eps }{(1+t+|q|)^{5-4\delta} (1+|q|)^{5+2(\ga-\delta)+4\gamma}},\quad\text{when }\quad q>0,\\
           \notag
      \frac{\eps  \cdot(1+|q|)^{ } }{(1+t+|q|)^{5-4\delta} \cdot (1+|q|)^{3-2\delta}}  \,\quad\text{when }\quad q<0 . \end{cases} \Big)  \\
      \notag
  &&  + \Big(  \begin{cases}  \frac{\eps }{(1+t+|q|)^{5-4\delta} \cdot (1+|q|)^{6+4(\ga-\delta) +4\gamma}},\quad\text{when }\quad q>0,\\
           \notag
      \frac{\eps \cdot (1+|q|)^{4 }  }{(1+t+|q|)^{5-4\delta} \cdot (1+|q|)^{6-4\delta} }  \,\quad\text{when }\quad q<0 . \end{cases} \Big) \Big)  
               \notag
 \eeaa

     \beaa
     \notag
   &\les&  \sum_{|K|\leq |I|}  \Big( |   \Lie_{Z^K} h  |^2 \Big) \cdot E (   \lfloor \frac{|I|}{2} \rfloor + \lfloor  \frac{n}{2} \rfloor  + 1)\\
 \notag
   && \cdot \Big(  \begin{cases}  \frac{\eps }{(1+t+|q|)^{5-4\delta} (1+|q|)^{4+4\gamma}},\quad\text{when }\quad q>0,\\
           \notag
      \frac{\eps  }{(1+t+|q|)^{5-4\delta} \cdot ( 1+|q| )^{2-4\de} } \,\quad\text{when }\quad q<0 . \end{cases} \Big) \; .
   \eeaa

\end{proof}

\section{The proof of exterior stability for $n\geq 4$}

Now, we look $n \geq 4$ and $\de = 0$ and we are interested only in the exterior region $\overline{C}$. We fix $q_0$ such that $\overline{C} \subseteq \{ q \geq q_0 \}$. 

\subsection{Using the Hardy type inequality for the space-time integrals of the source terms for $n \geq 4$}\
      
\begin{lemma}
For $n \geq 4$, $\de = 0$, for $q \geq q_0$, we have

   \beaa
 \notag
&&  \frac{(1+t )^{1+\la}}{\eps} \cdot |  \Lie_{Z^I}   ( g^{\la\mu} \derm_{\la}   \derm_{\mu}    A ) |^2 \, \\
   \notag
   &\les&  C(q_0) \cdot E^{ext} (   \lfloor \frac{|I|}{2} \rfloor + \lfloor  \frac{n}{2} \rfloor  + 1) \\
   \notag
 &&   \cdot \sum_{|K|\leq |I|}  \Big( |    \derm (\Lie_{Z^K} A ) |^2 +  |    \derm (\Lie_{Z^K} h ) |^2 \Big)  \cdot   \Big(  \frac{\eps  }{(1+t+|q|)^{2-\la} \cdot (1+|q|)^{2\gamma} } \Big) \\
     \notag
   && + C(q_0)  \cdot E^{ext} (   \lfloor \frac{|I|}{2} \rfloor + \lfloor  \frac{n}{2} \rfloor  + 1)  \\
   \notag
&&  \cdot  \sum_{|K|\leq |I|}  \Big( |   \Lie_{Z^K} A  |^2 +|   \Lie_{Z^K} h  |^2 \Big)\cdot \Big( \frac{\eps }{(1+t+|q|)^{2-\la} \cdot (1+|q|)^{2+2\gamma} }  \Big)  \; .\\
      \notag 
   \eeaa

\end{lemma}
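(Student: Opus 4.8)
The plan is to deduce the claim directly from the pointwise estimate for $\frac{(1+t)}{\eps}\cdot|\Lie_{Z^I}(g^{\la\mu}\derm_{\la}\derm_{\mu}A)|^2$ proved in the preceding lemma, via three reductions: (i) restricting to the exterior region $\overline{C}\subseteq\{q\ge q_0\}$, where the bootstrap assumption and hence the a priori decay estimates of Lemmas \ref{aprioriestimatesongradientoftheLiederivativesofthefields} and \ref{aprioriestimatefrombootstraponzerothderivativeofAandh1} hold with the \emph{exterior} energy, so that $E$ is replaced throughout by $E^{ext}$; (ii) multiplying the resulting inequality by the extra factor $(1+t)^{\la}$; and (iii) absorbing the $q<0$ part into the stated single formula using that $|q|$ stays bounded on the finite slab $q_0\le q\le 0$.

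First I would record the elementary fact that on $\{q\ge q_0\}$ one has, with $C(q_0)$ depending only on $q_0$,
\[
(1+t)^{\la}\le C(q_0)\,(1+t+|q|)^{\la}.
\]
Indeed, when $q>0$ we have $1+t+|q|=1+r\ge 1+t$, so no constant is needed; when $q_0\le q\le 0$ we have $|q|\le|q_0|$, hence $1+t\le 1+t+|q|\le 1+t+|q_0|\le(1+|q_0|)(1+t)$. Writing $\frac{(1+t)^{1+\la}}{\eps}=(1+t)^{\la}\cdot\frac{(1+t)}{\eps}$ and inserting this inequality reduces the claim to multiplying each decay factor on the right-hand side of the previous lemma by $(1+t+|q|)^{\la}$ and checking the regions $q>0$ and $q_0\le q\le 0$ separately.

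In the region $q>0$, setting $\de=0$ turns the exponent $2-2\de$ into $2$, so $\frac{\eps}{(1+t+|q|)^{2}(1+|q|)^{2\ga}}\cdot(1+t+|q|)^{\la}=\frac{\eps}{(1+t+|q|)^{2-\la}(1+|q|)^{2\ga}}$ and $\frac{\eps}{(1+t+|q|)^{2}(1+|q|)^{2+2\ga}}\cdot(1+t+|q|)^{\la}=\frac{\eps}{(1+t+|q|)^{2-\la}(1+|q|)^{2+2\ga}}$, which are exactly the asserted bounds. In the region $q_0\le q\le 0$, the relevant factors from the previous lemma are, with $\de=0$, the quantity $\frac{\eps\,(1+|q|)}{(1+t+|q|)^{2}}$ for the $|\derm(\Lie_{Z^K}A)|^2+|\derm(\Lie_{Z^K}h)|^2$ terms and $\frac{\eps}{(1+t+|q|)^{2}(1+|q|)}$ for the $|\Lie_{Z^K}A|^2+|\Lie_{Z^K}h|^2$ terms; since $1\le 1+|q|\le 1+|q_0|$ on this slab, both are $\le C(q_0)\,\frac{\eps}{(1+t+|q|)^{2}}$, hence after multiplication by $(1+t+|q|)^{\la}$ both are $\le C(q_0)\,\frac{\eps}{(1+t+|q|)^{2-\la}}$, which — using $1+|q|\le 1+|q_0|$ once more, now in the denominator — is bounded by $C(q_0)\,\frac{\eps}{(1+t+|q|)^{2-\la}(1+|q|)^{2\ga}}$ and by $C(q_0)\,\frac{\eps}{(1+t+|q|)^{2-\la}(1+|q|)^{2+2\ga}}$ respectively. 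Adding the contributions of the two regions and absorbing all constants into $C(q_0)$ gives the stated inequality.

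The argument is essentially bookkeeping, so there is no genuine obstacle; the two points requiring attention are to keep the $q_0$-dependence explicit — it enters solely through the crude bound $1+|q|\le 1+|q_0|$ on the finite slab $q_0\le q\le 0$, which is why a factor $C(q_0)$ appears here but not in the global estimate — and to make sure that throughout one uses the exterior a priori estimates rather than their global analogues, so that the energy constant on the right-hand side is $E^{ext}(\lfloor\frac{|I|}{2}\rfloor+\lfloor\frac{n}{2}\rfloor+1)$.
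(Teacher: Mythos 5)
Your proposal is correct and follows essentially the same route as the paper: restrict the pointwise bound of the preceding lemma to the exterior region (so the Klainerman--Sobolev constants become $E^{ext}$), and absorb the $q<0$ contribution on the bounded slab $q_0\le q\le 0$ into the $q>0$ formula via $1\le 1+|q|\le 1+|q_0|$, which is the sole source of $C(q_0)$. Your explicit treatment of the factor $(1+t)^{\la}\le (1+t+|q|)^{\la}$ converting $(1+t+|q|)^{-2}$ into $(1+t+|q|)^{-(2-\la)}$ is a detail the paper's own proof leaves implicit (it stops at the $\la=0$ form), so you have if anything been slightly more complete.
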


\begin{proof}
Based on what we have shown using the Klainerman-Sobolev inequality in the exterior, we get for $n \geq 4$, $\de =0$, that for all points in the exterior region $\overline{C}$, we have
 \beaa
 \notag
&& \frac{(1+t )}{\eps} \cdot |  \Lie_{Z^I}   ( g^{\la\mu} \derm_{\la}   \derm_{\mu}    A ) |^2 \, \\
   \notag
   &\les&  \sum_{|K|\leq |I|}  \Big( |    \derm (\Lie_{Z^K} A ) |^2 +  |    \derm (\Lie_{Z^K} h ) |^2 \Big) \cdot E^{ext} (   \lfloor \frac{|I|}{2} \rfloor + \lfloor  \frac{n}{2} \rfloor  + 1)\\
 \notag
   && \cdot \Big(  \begin{cases}  \frac{\eps }{(1+t+|q|)^{2} \cdot (1+|q|)^{2\gamma}},\quad\text{when }\quad q>0,\\
           \notag
      \frac{\eps  }{(1+t+|q|)^{2} \cdot ( 1+|q| )^{-1} } \,\quad\text{when }\quad q<0 . \end{cases} \Big) \\
     \notag
   && + \sum_{|K|\leq |I|}  \Big( |   \Lie_{Z^K} A  |^2 +|   \Lie_{Z^K} h  |^2 \Big) \cdot E^{ext} (   \lfloor \frac{|I|}{2} \rfloor + \lfloor  \frac{n}{2} \rfloor  + 1) \\
 \notag
   && \cdot \Big(  \begin{cases}  \frac{\eps }{(1+t+|q|)^{2} \cdot (1+|q|)^{2+2\gamma}},\quad\text{when }\quad q>0,\\
           \notag
      \frac{\eps  }{(1+t+|q|)^{2} \cdot ( 1+|q| )^{1} } \,\quad\text{when }\quad q<0 . \end{cases} \Big) \; .
      \notag 
   \eeaa
  Hence, for $q \geq q_0$, we have
   \beaa
 \notag
&&  \frac{(1+t )}{\eps} \cdot  |  \Lie_{Z^I}   ( g^{\la\mu} \derm_{\la}   \derm_{\mu}    A ) |^2 \, \\
   \notag
   &\les&  \sum_{|K|\leq |I|}  \Big( |    \derm (\Lie_{Z^K} A ) |^2 +  |    \derm (\Lie_{Z^K} h ) |^2 \Big)\cdot C(q_0) \cdot E^{ext}  (   \lfloor \frac{|I|}{2} \rfloor + \lfloor  \frac{n}{2} \rfloor  + 1)  \cdot   \Big(  \frac{\eps  }{(1+t+|q|)^{2} \cdot (1+|q|)^{2\gamma} } \Big) \\
     \notag
   && + \sum_{|K|\leq |I|}  \Big( |   \Lie_{Z^K} A  |^2 +|   \Lie_{Z^K} h  |^2 \Big)\cdot C(q_0)  \cdot E^{ext}  (   \lfloor \frac{|I|}{2} \rfloor + \lfloor  \frac{n}{2} \rfloor  + 1)  \cdot \Big( \frac{\eps }{(1+t+|q|)^{2} \cdot (1+|q|)^{2+2\gamma} }  \Big) \; .
      \notag 
   \eeaa

\end{proof}

\begin{lemma}
For $n \geq 4$, $\de = 0$, and $q \geq q_0$, we have
   \beaa
   \notag
 &&  \frac{(1+t )^{1+\la}}{\eps} \cdot  | \Lie_{Z^I}   ( g^{\la\mu} \derm_{\la}   \derm_{\mu}    h ) |^2   \\
     \notag
   &\les&   C(q_0)  \cdot E^{ext}  (   \lfloor \frac{|I|}{2} \rfloor + \lfloor  \frac{n}{2} \rfloor  + 1) \\
   && \cdot \sum_{|K|\leq |I|}  \Big( |    \derm (\Lie_{Z^K} A ) |^2 + |    \derm (\Lie_{Z^K} h ) |^2 \Big) \cdot \Big(   \frac{\eps }{(1+t+|q|)^{2-\la} \cdot (1+|q|)^{2+2\gamma}}  \Big)  \\
      \notag 
   &&+ C(q_0) \cdot E^{ext}  (   \lfloor \frac{|I|}{2} \rfloor + \lfloor  \frac{n}{2} \rfloor  + 1)  \\
   \notag
&& \cdot    \sum_{|K|\leq |I|}  \Big( |   \Lie_{Z^K} A  |^2 + |   \Lie_{Z^K} h  |^2 \Big) \cdot \Big(   \frac{\eps }{(1+t+|q|)^{5-\la} \cdot (1+|q|)^{2+4\gamma}} \Big)  \; .
      \notag 
   \eeaa

\end{lemma}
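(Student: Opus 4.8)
The plan is to deduce this bound directly from the pointwise estimate for $|\Lie_{Z^I}(g^{\la\mu}\derm_\la\derm_\mu h)|^2$ obtained in the preceding lemma, re-run in the exterior region, and then to trade the extra power $(1+t)^\la$ against the decay factor $(1+t+|q|)^{-2}$ (respectively $(1+t+|q|)^{-5}$) and to absorb the $q<0$ contributions using that $|q|$ stays bounded on $\{q\ge q_0\}\cap\{q<0\}$.

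First I would re-run the term-by-term analysis of the source term $\Lie_{Z^I}(g^{\la\mu}\derm_\la\derm_\mu h)$ coming from the two source-term lemmas above, but feeding in the \emph{exterior} a priori estimates of Lemmas \ref{aprioriestimatesongradientoftheLiederivativesofthefields} and \ref{aprioriestimatefrombootstraponzerothderivativeofAandh1} (which hold on $\overline{C}\supseteq\{q\ge q_0\}$), exactly as was done for $A$ in the first lemma of this subsection. With $\de=0$ (so that $\ga>\de$ holds automatically and the $q<0$ exponents collapse), this gives, on $\{q\ge q_0\}$, the estimate for $\tfrac{1+t}{\eps}\,|\Lie_{Z^I}(g^{\la\mu}\derm_\la\derm_\mu h)|^2$ with $E^{ext}$ in place of $E$: a $|\derm\Lie_{Z^K}A|^2+|\derm\Lie_{Z^K}h|^2$ piece bounded by $(1+t+|q|)^{-2}(1+|q|)^{-2-2\ga}$ for $q>0$ and $(1+t+|q|)^{-2}(1+|q|)^{-1}$ for $q<0$, plus a $|\Lie_{Z^K}A|^2+|\Lie_{Z^K}h|^2$ piece bounded by $(1+t+|q|)^{-5}(1+|q|)^{-2-4\ga}$ for $q>0$ and $(1+t+|q|)^{-5}$ for $q<0$ (each times $\eps$, times the higher-order exterior energy constant).

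Next I multiply through by $(1+t)^\la$. Using $(1+t)^\la\le(1+t+|q|)^\la$ converts the two time weights into $(1+t+|q|)^{-(2-\la)}$ and $(1+t+|q|)^{-(5-\la)}$, which already has exactly the shape asserted in the statement on the set $q>0$. On $\{q\ge q_0\}\cap\{q<0\}$ one has $1\le 1+|q|\le 1+|q_0|$, so every nonpositive power of $(1+|q|)$ appearing there is $\le$ a constant $C(q_0)$; moreover the factors $(1+|q|)^{-2-2\ga}$ and $(1+|q|)^{-2-4\ga}$ that are present on $q>0$ but absent on $q<0$ are themselves $\le 1$, so re-inserting them on $q<0$ only costs the same constant $C(q_0)$ (depending also on $\ga$ and $\la$). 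This lets me replace each $q<0$ branch by the corresponding $q>0$ expression at the price of the overall factor $C(q_0)$, producing the single uniform bound on $\{q\ge q_0\}$ of precisely the claimed form.

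The step needing the most care — bookkeeping rather than a genuine obstacle — is the first one: checking that \emph{each} of the many source subterms for $h$ (the $\derm A$, $A$, $\derm h$ and $h$ groups) dominates, after squaring and multiplying by $(1+t)$, by one of the two model rates above. This is where the inequality $\ga\ge\de$ (here trivially $\ga>0=\de$) is used to discard the $\ga-\de$ surpluses in the $q>0$ exponents, and where one must track the $q<0$ branches, in which the powers of $(1+|q|)$ can be growing before the restriction to $\{q\ge q_0\}$ is imposed. Once that matching is in place, the remaining manipulation is the elementary weight algebra described above.
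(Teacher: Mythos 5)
Your proposal is correct and follows essentially the same route as the paper: the paper likewise takes the pointwise two-branch bound on $\frac{(1+t)}{\eps}|\Lie_{Z^I}(g^{\la\mu}\derm_\la\derm_\mu h)|^2$ from the preceding source-term lemma (with the exterior a priori estimates, hence $E^{ext}$), and then restricts to $\{q\ge q_0\}$ so that the $q<0$ branch is absorbed into $C(q_0)$ times the $q>0$ form, with $(1+t)^\la\le(1+t+|q|)^\la$ supplying the $2-\la$ and $5-\la$ exponents.
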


\begin{proof}
Using Klainerman-Sobolev inequality in the exterior, we get based on what we showed for $n \geq 4$, $\de =0$, 
\beaa
   \notag
 &&  \frac{(1+t )}{\eps} \cdot  | \Lie_{Z^I}   ( g^{\la\mu} \derm_{\la}   \derm_{\mu}    h ) |^2   \\
     \notag
   &\les&  \sum_{|K|\leq |I|}  \Big( |    \derm (\Lie_{Z^K} A ) |^2 + |    \derm (\Lie_{Z^K} h ) |^2 \Big) \cdot E^{ext} (   \lfloor \frac{|I|}{2} \rfloor + \lfloor  \frac{n}{2} \rfloor  + 1)\\
 \notag
   && \cdot \Big(  \begin{cases}  \frac{\eps }{(1+t+|q|)^{2} \cdot (1+|q|)^{2+2\gamma}},\quad\text{when }\quad q>0,\\
           \notag
      \frac{\eps  }{(1+t+|q|)^{2} \cdot ( 1+|q| )^{1} } \,\quad\text{when }\quad q<0 . \end{cases} \Big) \\
      \notag 
   &&+  \sum_{|K|\leq |I|}  \Big( |   \Lie_{Z^K} A  |^2 + |   \Lie_{Z^K} h  |^2 \Big) \cdot E^{ext}  (   \lfloor \frac{|I|}{2} \rfloor + \lfloor  \frac{n}{2} \rfloor  + 1)\\
 \notag
   && \cdot \Big(  \begin{cases}  \frac{\eps }{(1+t+|q|)^{5} \cdot (1+|q|)^{2+4\gamma}},\quad\text{when }\quad q>0,\\
           \notag
      \frac{\eps  }{(1+t+|q|)^{5-4\delta} } \,\quad\text{when }\quad q<0 . \end{cases} \Big) \; .
      \notag 
   \eeaa
Thus, we obtain for $q \geq q_0$,
   \beaa
   \notag
 &&  \frac{(1+t )}{\eps} \cdot  | \Lie_{Z^I}   ( g^{\la\mu} \derm_{\la}   \derm_{\mu}    h ) |^2   \\
     \notag
   &\les&   C(q_0)  \cdot E^{ext}  (   \lfloor \frac{|I|}{2} \rfloor + \lfloor  \frac{n}{2} \rfloor  + 1)   \\
   \notag
   &&   \sum_{|K|\leq |I|}  \Big( |    \derm (\Lie_{Z^K} A ) |^2 + |    \derm (\Lie_{Z^K} h ) |^2 \Big) \cdot \Big(   \frac{\eps }{(1+t+|q|)^{2} \cdot (1+|q|)^{2+2\gamma}}  \Big) \\
      \notag 
   &&+ C(q_0) \cdot E^{ext}  (   \lfloor \frac{|I|}{2} \rfloor + \lfloor  \frac{n}{2} \rfloor  + 1) \cdot \sum_{|K|\leq |I|}  \Big( |   \Lie_{Z^K} A  |^2 + |   \Lie_{Z^K} h  |^2 \Big)  \cdot \Big(   \frac{\eps }{(1+t+|q|)^{5} \cdot (1+|q|)^{2+4\gamma}} \Big) \; .
      \notag 
   \eeaa

\end{proof}

We recapitulate the following corollary from \cite{G4}.

\begin{corollary}\label{HardytypeinequalityforintegralstartingatROm}
Let $w$ defined as in Definition \ref{defoftheweightw}, where $\ga > 0$.
Let  $\Phi$ a tensor that decays fast enough at spatial infinity for all time $t$\,, such that
\bea
 \int_{\SSS^{n-1}} \lim_{r \to \infty} \Big( \frac{r^{n-1}}{(1+t+r)^{a} \cdot (1+|q|) } w(q) \cdot <\Phi, \Phi>  \Big)  d\si^{n-1} (t ) &=& 0 \; .
\eea
Let $R(\Om)  \geq 0 $\,, be a function of $\Om \in \SSS^{n-1}$\,. Then, since $\ga \neq 0$\,, we have for $0 \leq a \leq n-1$\,, that 
\bea
\notag
 &&   \int_{\SSS^{n-1}} \int_{r=R(\Om)}^{r=\infty} \frac{r^{n-1}}{(1+t+r)^{a}} \cdot   \frac{w (q)}{(1+|q|)^2} \cdot <\Phi, \Phi>   \cdot dr  \cdot d\si^{n-1}  \\
 \notag
 &\leq& c(\ga) \cdot  \int_{\SSS^{n-1}} \int_{r=R(\Om)}^{r=\infty}  \frac{ r^{n-1}}{(1+t+r)^{a}} \cdot w(q) \cdot <\pa_r\Phi, \pa_r \Phi>  \cdot  dr  \cdot d\si^{n-1}  \; , \\
 \eea
 where the constant $c(\ga)$ does not depend on $R(\Om)$\,.
 \end{corollary}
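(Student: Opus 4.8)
The plan is to reduce the statement to a one-dimensional weighted Hardy inequality in $r$, at fixed time $t$ and fixed $\Om \in \SSS^{n-1}$, and then integrate over $\SSS^{n-1}$. Write $q = r-t$, set $g(r) := \frac{r^{n-1}}{(1+t+r)^{a}}$ and $\mu(r) := g(r)\cdot\frac{w(q)}{(1+|q|)^{2}}$. A direct computation gives $g'(r) = \frac{r^{n-2}}{(1+t+r)^{a+1}}\big[(n-1)(1+t+r) - a r\big]$, and since $t \geq 0$ and $0 \leq a \leq n-1$ we have $(n-1)(1+t+r)-ar \geq (n-1-a)r \geq 0$, so $g$ is non-decreasing on $[0,\infty)$. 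This monotonicity — which is exactly where the hypothesis $a \leq n-1$ enters — is essential, because for such $a$ the integral $\int^{\infty}\mu(r)\,dr$ of the left-hand weight in fact \emph{diverges}, so the naive ``primitive from $r = \infty$'' is unavailable.

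First I would build the multiplier. Since $\ga \neq 0$, define $\psi(q) := (1+|q|)^{-1}$ for $q \leq 0$ and $\psi(q) := \frac{(1+q)^{2\ga}}{2\ga} + 1 - \frac{1}{2\ga}$ for $q \geq 0$. Then $\psi$ is of class $C^{1}$ on $\R$ (the two pieces and their first derivatives agree at $q = 0$, both derivatives equalling $1$ there), one has $\psi'(q) = \frac{w(q)}{(1+|q|)^{2}}$ for every $q$, and $\psi$ is positive with the one-sided bound $\psi(q) \leq c(\ga)\,\frac{w(q)}{1+|q|}$: for $q \leq 0$ this is an equality, and for $q \geq 0$ it follows from $\psi$ being increasing with $\psi(0)=1$ together with $\psi(q) \leq (1+\tfrac{1}{\ga})(1+q)^{2\ga} = (1+\tfrac1\ga)\frac{w(q)}{1+|q|}$.

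Next, set $\nu(r) := g(r)\,\psi(q)$, so that by the chain rule $\nu'(r) = g'(r)\psi(q) + g(r)\frac{w(q)}{(1+|q|)^{2}}$, i.e. $\mu(r) = \nu'(r) - g'(r)\psi(q)$. Since $g' \geq 0$ and $\psi \geq 0$, the term $-\int g'\psi\,<\Phi,\Phi>\,dr$ is non-positive and may be discarded, and an integration by parts (using $\pa_r<\Phi,\Phi> = 2<\pa_r\Phi,\Phi>$) leaves
\[
\int_{R(\Om)}^{\infty}\mu(r)\,<\Phi,\Phi>\,dr \;\leq\; \big[\,\nu\,<\Phi,\Phi>\,\big]_{R(\Om)}^{\infty} \;-\; 2\int_{R(\Om)}^{\infty}\nu\,<\pa_r\Phi,\Phi>\,dr .
\]
The boundary term at $r = R(\Om)$ equals $-\nu(R(\Om))\,<\Phi,\Phi> \leq 0$ because $\nu \geq 0$, and the boundary term at $r = \infty$ disappears after integration over $\SSS^{n-1}$ by the assumed decay of $\Phi$, using $0 \leq \nu(r) \leq c(\ga)\,\frac{r^{n-1}}{(1+t+r)^{a}}\,\frac{w(q)}{1+|q|}$. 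Cauchy--Schwarz then gives
\[
\int_{R(\Om)}^{\infty}\mu\,<\Phi,\Phi>\,dr \;\leq\; 2\Big(\int_{R(\Om)}^{\infty}\mu\,<\Phi,\Phi>\,dr\Big)^{\!1/2}\Big(\int_{R(\Om)}^{\infty}\tfrac{\nu^{2}}{\mu}\,<\pa_r\Phi,\pa_r\Phi>\,dr\Big)^{\!1/2},
\]
and from $\psi(q) \leq c(\ga)\,\frac{w(q)}{1+|q|}$ one computes $\frac{\nu^{2}}{\mu} = g\,\psi^{2}\,\frac{(1+|q|)^{2}}{w(q)} \leq c(\ga)^{2}\,g(r)\,w(q)$. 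Dividing out one factor $\big(\int\mu\,<\Phi,\Phi>\big)^{1/2}$ and squaring yields the claim, with a constant depending only on $\ga$ and not on $R(\Om)$. To avoid presupposing finiteness of the left-hand side I would run the argument with an upper cutoff $r \leq \rho$, integrate over $\SSS^{n-1}$ first, and then let $\rho \to \infty$; the extra boundary term at $r = \rho$ is non-negative and tends to $0$ by the decay hypothesis, so the estimate passes to the limit by monotone convergence.

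The main obstacle is the construction of the multiplier $\nu = g\,\psi$, which must do three things simultaneously: $\psi$ (hence $\nu$) must be non-negative, so that both the discarded volume term $-\int g'\psi<\Phi,\Phi>$ and the $r=R(\Om)$ boundary term carry the favourable sign; the defect $\nu'-\mu = g'\psi$ must be non-negative, which is precisely what forces $0 \leq a \leq n-1$; and $\nu$ must obey the size bound ``$\nu$ is at most $c(\ga)$ times $\mu\,(1+|q|)$'' \emph{uniformly} across the two regimes $q<0$ and $q>0$, which pins $\psi$ down as (essentially) the primitive $\sim w(q)/(1+|q|)$ of $w(q)/(1+|q|)^{2}$ and is where $\ga \neq 0$ is used. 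The only bookkeeping subtlety is the additive constant in the $q\geq 0$ branch of $\psi$, chosen so that $\nu$ is $C^{1}$ and the integration by parts is legitimate.
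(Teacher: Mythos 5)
Your proof is correct. Note that the paper does not actually prove this corollary here: it is stated as a recapitulation from \cite{G4}, so there is no in-paper argument to compare against. Your proposal is the standard multiplier proof of such weighted Hardy inequalities and all the key points check out: the primitive $\psi$ with $\psi'=w/(1+|q|)^2$ is computed correctly and is $C^1$, positive, and satisfies $\psi\leq c(\ga)\,w/(1+|q|)$ precisely because $\ga>0$ (for $q\geq 0$ one gets $\psi(q)\leq(1+\tfrac{1}{2\ga})(1+q)^{2\ga}$); the sign condition $g'\geq 0$ is exactly where $0\leq a\leq n-1$ and $t\geq 0$ enter, and it is what lets you discard both the volume defect $-\int g'\psi\langle\Phi,\Phi\rangle$ and the boundary term at $r=R(\Om)$ with the favourable sign; the Cauchy--Schwarz step with $\nu^2/\mu\leq c(\ga)^2\,g\,w$ closes the estimate with a constant independent of $R(\Om)$, $t$, $a$ and $n$. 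Two small remarks. First, in the cutoff argument the boundary term at $r=\rho$ enters with the \emph{unfavourable} sign, so you cannot discard it; you correctly send it to zero instead, but you should absorb the cross term via $2X_\rho^{1/2}Y_\rho^{1/2}\leq\tfrac12 X_\rho+2Y_\rho$ before passing to the limit, rather than ``dividing out'' a factor whose finiteness is only known after the cutoff. Second, concluding that $\int_{\SSS^{n-1}}\nu(\rho)\langle\Phi,\Phi\rangle\,d\si^{n-1}\to 0$ from the hypothesis, which places the limit \emph{inside} the sphere integral, tacitly uses an interchange of limit and integral (dominated convergence or uniformity in $\Om$); this is the standard reading of such decay hypotheses in this literature and is harmless, but worth a sentence.
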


\begin{lemma}\label{estimateonthesourcetermsforAsuitableforneq4}
For $n \geq 4$, $\de = 0$, and for fields decaying fast enough at spatial infinity, such that for all time $t$, for $|K| \leq |I| $
\bea
\notag
 \int_{\SSS^{n-1}} \lim_{r \to \infty} \Big( \frac{r^{n-1}}{(1+t+r)^{2-\la} \cdot (1+|q|) } w(q) \cdot   \Big( |   \Lie_{Z^K} A  |^2 + |   \Lie_{Z^K} h  |^2  \Big)  d\si^{n-1} (t ) &=& 0  \; , \\
\eea
then, for $\ga \neq 0$ and $\mu \neq \frac{1}{2}$, we have
 \bea
   \notag
 &&   \int_0^t \Big(  \int_{\Si_{t}^{ext}}  \frac{(1+t )^{1+\la}}{\eps} \cdot  | \Lie_{Z^I}   ( g^{\la\mu} \derm_{\la}   \derm_{\mu}    A ) |^2  \cdot   w \Big) \cdot dt   \\
     \notag
    &\les& c(\ga, \mu) \cdot C(q_0) \cdot E^{ext} (   \lfloor \frac{|I|}{2} \rfloor + \lfloor  \frac{n}{2} \rfloor  + 1)  \\
    \notag
    &&  \times   \int_0^t          \frac{\eps  }{(1+t)^{2-\la} \ }  \cdot  \Big(  \int_{\Si_{t}^{ext}}   \sum_{|K|\leq |I|}  \Big( |    \derm (\Lie_{Z^K} A ) |^2 + |    \derm (\Lie_{Z^K} h ) |^2 \Big)   \cdot   w \Big) \cdot dt \; . \\
   \eea

\end{lemma}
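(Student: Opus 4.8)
The plan is to feed the pointwise estimate for $\tfrac{(1+\tau)^{1+\la}}{\eps}\,| \Lie_{Z^I} ( g^{\la\mu} \derm_{\la} \derm_{\mu} A )|^2$ established above (valid on the exterior region $q\ge q_0$) into the space-time integral on the left-hand side, multiply through by the weight $w(q)$, and then estimate the two resulting families of terms separately: those already carrying covariant gradients $\derm(\Lie_{Z^K}A)$, $\derm(\Lie_{Z^K}h)$, and those carrying only the undifferentiated fields $\Lie_{Z^K}A$, $\Lie_{Z^K}h$. This reduction is legitimate because $\overline{C}\subseteq\{q\ge q_0\}$, so every slice $\Si_\tau^{ext}$ lies in the region on which that pointwise bound holds.

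For the gradient terms I would simply discard the surplus factor $(1+|q|)^{-2\ga}\le 1$ (using $\ga>0$ and $|q|\ge 0$), note $\frac{w(q)}{(1+|q|)^{2\ga}}\le w(q)$, and bound $(1+\tau+|q|)^{-(2-\la)}\le (1+\tau)^{-(2-\la)}$; integrating over $\Si_\tau^{ext}$ then already produces a contribution of exactly the shape claimed in the statement.

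The real work is the undifferentiated terms, and the key ingredient is the exterior Hardy-type inequality. First I would replace $(1+\tau+|q|)$ by $(1+\tau+r)$, which is harmless since on $\Si_\tau^{ext}$ one has $r\ge 0$ and $|q|=|r-\tau|\le \tau+r$, so the two quantities are comparable up to a universal constant. Then I would rewrite the relevant weight $w(q)\,(1+\tau+r)^{-(2-\la)}(1+|q|)^{-(2+2\ga)}$ as $(1+|q|)^{-2\ga}\cdot (1+\tau+r)^{-(2-\la)}\cdot w(q)(1+|q|)^{-2}$ and discard $(1+|q|)^{-2\ga}\le 1$; what remains is precisely the weight to which Corollary \ref{HardytypeinequalityforintegralstartingatROm} applies. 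Working in polar coordinates on $\Si_\tau^{ext}=\{\,r\ge R(\Om,\tau)\,\}$ with exponent $a:=2-\la$ — which lies in $[0,n-1]$ precisely because $n\ge4$, the hypothesis of the lemma — and observing that the decay condition imposed on $\Lie_{Z^K}A$ and $\Lie_{Z^K}h$ is exactly the spatial-infinity hypothesis required by that corollary (with this $a$, taking $\Phi=\Lie_{Z^K}A$ and $\Phi=\Lie_{Z^K}h$), I would convert $|\Lie_{Z^K}A|^2$ and $|\Lie_{Z^K}h|^2$ into $|\pa_r(\Lie_{Z^K}A)|^2\le|\derm(\Lie_{Z^K}A)|^2$ and $|\pa_r(\Lie_{Z^K}h)|^2\le|\derm(\Lie_{Z^K}h)|^2$, at the cost of the constant $c(\ga,\mu)$. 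Pulling the surviving factor $(1+\tau+r)^{-(2-\la)}\le (1+\tau)^{-(2-\la)}$ out of the spatial integral, integrating in $\tau\in[0,t]$, and folding $C(q_0)$, the Hardy constant and the weight-comparison constants into a single prefactor $c(\ga,\mu)\,C(q_0)\,E^{ext}(\lfloor\tfrac{|I|}{2}\rfloor+\lfloor\tfrac{n}{2}\rfloor+1)$ then gives the asserted inequality.

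The only genuine obstacle is the bookkeeping around the Hardy inequality: verifying that $2-\la$ is an admissible exponent (this is where $n\ge4$ enters) and that the no-flux-at-spatial-infinity hypothesis of Corollary \ref{HardytypeinequalityforintegralstartingatROm} for $\Phi=\Lie_{Z^K}A$ and $\Phi=\Lie_{Z^K}h$ really is implied by the decay assumption in the statement; once these are in place, the rest is routine manipulation of the weight $w$ and of the decaying factors furnished by the pointwise bound.
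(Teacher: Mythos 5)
Your proposal is correct and follows essentially the same route as the paper: insert the pointwise exterior bound on $\frac{(1+t)^{1+\la}}{\eps}|\Lie_{Z^I}(g^{\la\mu}\derm_{\la}\derm_{\mu}A)|^2$ valid for $q\ge q_0$, absorb the gradient terms directly after discarding the surplus $(1+|q|)^{-2\ga}$, and convert the undifferentiated terms via the exterior Hardy inequality of Corollary \ref{HardytypeinequalityforintegralstartingatROm} with exponent $a=2-\la\le n-1$ (using $n\ge4$) under the stated no-flux decay hypothesis. The bookkeeping you flag (comparability of $1+t+|q|$ and $1+t+r$, admissibility of the exponent) is exactly what the paper handles, so no gap.
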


\begin{proof}
  We showed that for $n \geq 4$, $\de = 0$, for $q \geq q_0$, we have

   \beaa
 \notag
&&  \frac{(1+t )^{1+\la}}{\eps} \cdot  |  \Lie_{Z^I}   ( g^{\la\mu} \derm_{\la}   \derm_{\mu}    A ) |^2 \, \\
   \notag
   &\les& C(q_0) \cdot E^{ext} (   \lfloor \frac{|I|}{2} \rfloor + \lfloor  \frac{n}{2} \rfloor  + 1)  \\
&&   \times \sum_{|K|\leq |I|}  \Big( |    \derm (\Lie_{Z^K} A ) |^2 +  |    \derm (\Lie_{Z^K} h ) |^2 \Big)  \cdot   \Big(  \frac{\eps  }{(1+t+|q|)^{2-\la} \cdot (1+|q|)^{2\gamma} } \Big)\\
     \notag
   && +  C(q_0)  \cdot E^{ext} (   \lfloor \frac{|I|}{2} \rfloor + \lfloor  \frac{n}{2} \rfloor  + 1)  \cdot \sum_{|K|\leq |I|}  \Big( |   \Lie_{Z^K} A  |^2 +|   \Lie_{Z^K} h  |^2 \Big) \cdot \Big( \frac{\eps }{(1+t+|q|)^{2-\la} \cdot (1+|q|)^{2+2\gamma} }  \Big) \; .
      \notag 
   \eeaa

Based on the Hardy inequality that we have shown in Corollary \ref{HardytypeinequalityforintegralstartingatROm}, we get that for $\ga \neq 0$ and $ 0 < \la \leq \frac{1}{2}$ (and therefore $2-\la \leq 3 \leq n-1$ for $n \geq 4$), under the assumption again that $\Lie_{Z^K} A$ and $\Lie_{Z^K} h$ decay fast enough at spatial infinity for all time $t$, for $|K| \leq |I| $, such that
\bea
\notag
 \int_{\SSS^{n-1}} \lim_{r \to \infty} \Big( \frac{r^{n-1}}{(1+t+r)^{2-\la} \cdot (1+|q|) } w(q) \cdot   \Big( |   \Lie_{Z^K} A  |^2 + |   \Lie_{Z^K} h  |^2  \Big)  d\si^{n-1} (t ) &=& 0  \; , \\
\eea
that
\bea
\notag
 &&   \int_{\SSS^{n-1}} \int_{r=R(\Om)}^{r=\infty} \frac{r^{n-1}}{(1+t+r)^{2-\la}} \cdot   \frac{w (q)}{(1+|q|)^2} \cdot <\Phi, \Phi>   \cdot dr  \cdot d\si^{n-1}  \\
 \notag
 &\leq& c(\ga, \mu) \cdot  \int_{\SSS^{n-1}} \int_{r=R(\Om)}^{r=\infty}  \frac{ r^{n-1}}{(1+t+r)^{2-\la}} \cdot w(q) \cdot <\pa_r\Phi, \pa_r \Phi>  \cdot  dr  \cdot d\si^{n-1}  \; . \\
 \eea

By choosing $R(\Om)$ such that when $\Om$ spans $\SSS^{n-1}$, we obtain the intersection of $\Sigma_t$ and $N_{t_1}^{t_2}$ (the null boundary for the metric $g$ of $\overline{C}$). We get
  \beaa
\notag
  && \int_{\Si_{t}^{ext}} \frac{1}{(1+t+|q| )^{2-\la}(1+|q|)^2} \cdot \Big( |   \Lie_{Z^K} A  |^2 + |   \Lie_{Z^K} h  |^2  \Big) \cdot   w    \\
     &\leq& c(\ga) \cdot  \int_{\Si_{t}^{ext}}  \frac{1}{(1+t+|q|)^{2-\la}}  \cdot  \Big( |  \derm( \Lie_{Z^K} A ) |^2 + | \derm(    \Lie_{Z^K} h )  |^2  \Big)  \cdot   w    \; .
 \eeaa
 As a result,
        \beaa
 \notag
&& \int_0^t \Big( \int_{\Si_{t}^{ext}} \frac{(1+t )^{1+\la}}{\eps} \cdot  |  \Lie_{Z^I}   ( g^{\la\mu} \derm_{\la}   \derm_{\mu}    A ) |^2 \cdot w \Big) \cdot dt\, \\
   \notag
   &\les&   C(q_0) \cdot E^{ext} (   \lfloor \frac{|I|}{2} \rfloor + \lfloor  \frac{n}{2} \rfloor  + 1)  \\
   &&  \cdot  \int_0^t  \Big( \int_{\Si_{t}^{ext}} \sum_{|K|\leq |I|}  \Big( |    \derm (\Lie_{Z^K} A ) |^2 +  |    \derm (\Lie_{Z^K} h ) |^2 \Big)   \cdot       \frac{\eps  }{(1+t+|q|)^{2-\la}  } \cdot w \Big)  \cdot dt \\
     \notag
   && +  C(q_0) \cdot E^{ext} (   \lfloor \frac{|I|}{2} \rfloor + \lfloor  \frac{n}{2} \rfloor  + 1)   \\
 && \cdot \int_0^t    \Big( \int_{\Si_{t}^{ext}} \sum_{|K|\leq |I|}  \Big( |   \Lie_{Z^K} A  |^2 +|   \Lie_{Z^K} h  |^2 \Big)\cdot   \frac{\eps  }{(1+t+|q|)^{2-\la} \cdot ( 1+|q| )^{2} }  \cdot w \Big)  \cdot dt \\
      \notag 
         &\les&  C(q_0) \cdot E^{ext} (   \lfloor \frac{|I|}{2} \rfloor + \lfloor  \frac{n}{2} \rfloor  + 1)  \\
         && \cdot \int_0^t   \Big( \int_{\Si_{t}^{ext}} \sum_{|K|\leq |I|}  \Big( |    \derm (\Lie_{Z^K} A ) |^2 +  |    \derm (\Lie_{Z^K} h ) |^2 \Big)  \cdot       \frac{\eps  }{(1+t+|q|)^{2-\la}  } \cdot w \Big) \cdot dt  \\
     \notag
   && +   C(q_0) \cdot E^{ext} (   \lfloor \frac{|I|}{2} \rfloor + \lfloor  \frac{n}{2} \rfloor  + 1) \\
&&  \cdot  \int_0^t  \Big( \int_{\Si_{t}^{ext}} \sum_{|K|\leq |I|}  \Big( | \derm (   \Lie_{Z^K} A ) |^2 +| \derm (  \Lie_{Z^K} h )   |^2 \Big) \cdot      \frac{ c(\ga) \cdot\eps  }{(1+t+|q|)^{2-\la}  }  \cdot w \Big)   \cdot dt \; .
      \notag 
   \eeaa
   
   \end{proof}

\begin{lemma}\label{Theestimateonthesourcetermsforhsuitableforneq4}
For $n \geq 4$, $\de = 0$, and for fields decaying fast enough at spatial infinity, such that for all time $t$, for $|K| \leq |I| $,
\bea
\notag
 \int_{\SSS^{n-1}} \lim_{r \to \infty} \Big( \frac{r^{n-1}}{(1+t+r)^{2-\la} \cdot (1+|q|) } \cdot w(q) \cdot   \Big( |   \Lie_{Z^K} A  |^2 + |   \Lie_{Z^K} h  |^2  \Big)  d\si^{n-1} (t ) &=& 0  \; , \\
\eea
then, for $\ga \neq 0$ we have
 \beaa
   \notag
 &&   \int_0^t \Big(  \int_{\Si_{t}^{ext}}  \frac{(1+t )^{1+\la}}{\eps} \cdot  | \Lie_{Z^I}   ( g^{\la\mu} \derm_{\la}   \derm_{\mu}    h ) |^2  \cdot   w \Big) \cdot dt   \\
     \notag
    &\les& c(\ga) \cdot  C(q_0) \cdot E^{ext} (   \lfloor \frac{|I|}{2} \rfloor + \lfloor  \frac{n}{2} \rfloor  + 1)  \\
    \notag
    &&  \times    \int_0^t          \frac{\eps  }{(1+t)^{2-\la} \ }  \cdot  \Big(  \int_{\Si_{t}^{ext}}   \sum_{|K|\leq |I|}  \Big( |    \derm (\Lie_{Z^K} A ) |^2 + |    \derm (\Lie_{Z^K} h ) |^2 \Big)   \cdot   w \Big) \cdot dt \; .  \\
   \eeaa

\end{lemma}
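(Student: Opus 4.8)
The plan is to follow the proof of Lemma \ref{estimateonthesourcetermsforAsuitableforneq4} essentially verbatim, with the pointwise estimate for the $A$‑source replaced by the one established just above for the $h$‑source. First I would recall that, for $n\geq 4$, $\de=0$ and in the exterior region $q\geq q_0$, feeding the Klainerman--Sobolev inequality in the exterior into the null structure of $\Lie_{Z^I}(g^{\la\mu}\derm_\la\derm_\mu h)$ gives the pointwise bound
\beaa
&& \frac{(1+t)^{1+\la}}{\eps}\cdot| \Lie_{Z^I}(g^{\la\mu}\derm_\la\derm_\mu h)|^2 \\
&\les& C(q_0)\cdot E^{ext}(\lfloor \tfrac{|I|}{2}\rfloor+\lfloor\tfrac{n}{2}\rfloor+1)\cdot\sum_{|K|\leq|I|}\big(|\derm(\Lie_{Z^K}A)|^2+|\derm(\Lie_{Z^K}h)|^2\big)\cdot\frac{\eps}{(1+t+|q|)^{2-\la}(1+|q|)^{2+2\ga}} \\
&& +\,C(q_0)\cdot E^{ext}(\lfloor\tfrac{|I|}{2}\rfloor+\lfloor\tfrac{n}{2}\rfloor+1)\cdot\sum_{|K|\leq|I|}\big(|\Lie_{Z^K}A|^2+|\Lie_{Z^K}h|^2\big)\cdot\frac{\eps}{(1+t+|q|)^{5-\la}(1+|q|)^{2+4\ga}}\;.
\eeaa
Multiplying through by $w$ and integrating over $\Si_t^{ext}$, the first sum is already of the desired shape: since $(1+t+|q|)\geq(1+t)$ and, as $\ga>0$, $(1+|q|)^{-2-2\ga}\leq 1$, each such term is bounded by $\tfrac{\eps}{(1+t)^{2-\la}}$ times $\int_{\Si_t^{ext}}\big(|\derm(\Lie_{Z^K}A)|^2+|\derm(\Lie_{Z^K}h)|^2\big)\,w$.

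The second sum is where the work lies. Because $5-\la\geq 2-\la$ and, since $\ga>0$, $2+4\ga\geq 2$, I would first dominate the weight $\frac{1}{(1+t+|q|)^{5-\la}(1+|q|)^{2+4\ga}}$ by $\frac{1}{(1+t+|q|)^{2-\la}(1+|q|)^{2}}$, and then invoke the exterior Hardy inequality of Corollary \ref{HardytypeinequalityforintegralstartingatROm} with exponent $a=2-\la$, applied to $\Phi=\Lie_{Z^K}A$ and to $\Phi=\Lie_{Z^K}h$. The required far‑field hypothesis is exactly the vanishing of $\int_{\SSS^{n-1}}\lim_{r\to\infty}\big(\frac{r^{n-1}}{(1+t+r)^{2-\la}(1+|q|)}\,w(q)\,(|\Lie_{Z^K}A|^2+|\Lie_{Z^K}h|^2)\big)\,d\si^{n-1}(t)$ assumed in the statement, and the admissibility condition $0\leq 2-\la\leq n-1$ holds since $0<\la\leq\tfrac12$ and $n\geq 4$. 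As in the $A$‑case I would choose $R(\Om)$ so that, as $\Om$ ranges over $\SSS^{n-1}$, the surface $r=R(\Om)$ sweeps out the intersection of $\Si_t$ with the null boundary $N$ of $\overline{C}$, so that $\int_{r=R(\Om)}^{\infty}$ reproduces the slice integral $\int_{\Si_t^{ext}}$; the Hardy inequality then converts $\int_{\Si_t^{ext}}\frac{w}{(1+t+|q|)^{2-\la}(1+|q|)^2}(|\Lie_{Z^K}A|^2+|\Lie_{Z^K}h|^2)$ into $c(\ga)\int_{\Si_t^{ext}}\frac{w}{(1+t+|q|)^{2-\la}}(|\pa_r(\Lie_{Z^K}A)|^2+|\pa_r(\Lie_{Z^K}h)|^2)$, and bounding $|\pa_r\Phi|^2\leq|\derm\Phi|^2$ together with $(1+t+|q|)^{-(2-\la)}\leq(1+t)^{-(2-\la)}$ puts it into the same form as the first sum.

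Finally, summing over $|K|\leq|I|$, collecting the two contributions, and integrating in the outer time variable from $0$ to $t$ yields the stated inequality, with constant $c(\ga)\,C(q_0)\,E^{ext}(\lfloor\tfrac{|I|}{2}\rfloor+\lfloor\tfrac{n}{2}\rfloor+1)$. The only genuinely delicate point is the application of the Hardy inequality: one must check that the power of $(1+t+r)$ stays in the admissible window $[0,n-1]$ — which forces the restriction $0<\la\leq\tfrac12$ when $n\geq4$ — and that the boundary term at spatial infinity truly vanishes, both of which are guaranteed by the hypotheses; everything else is the same bookkeeping as in the proof of Lemma \ref{estimateonthesourcetermsforAsuitableforneq4}.
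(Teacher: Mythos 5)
Your proposal is correct and follows essentially the same route as the paper: recall the pointwise bound on $\frac{(1+t)^{1+\la}}{\eps}|\Lie_{Z^I}(g^{\la\mu}\derm_\la\derm_\mu h)|^2$ in $q\geq q_0$, absorb the gradient terms directly using $(1+t+|q|)\geq 1+t$ and $\ga>0$, dominate the zeroth-order weight by $\frac{1}{(1+t+|q|)^{2-\la}(1+|q|)^2}$, and convert those terms via the exterior Hardy inequality of Corollary \ref{HardytypeinequalityforintegralstartingatROm} with $a=2-\la$ and $R(\Om)$ tracing the intersection of $\Sigma_t$ with $N$. Your identification of the restriction $0<\la\leq\frac12$ (so that $2-\la\leq n-1$) and of the far-field vanishing hypothesis as the two points requiring care matches the paper's proof exactly.
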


\begin{proof}

We have shown that for $n \geq 4$, $\de = 0$, and $q \geq q_0$,

   \beaa
   \notag
 &&  \frac{(1+t )^{1+\la}}{\eps} \cdot  | \Lie_{Z^I}   ( g^{\la\mu} \derm_{\la}   \derm_{\mu}    h ) |^2   \\
     \notag
   &\les&   C(q_0)  \cdot E^{ext}  (   \lfloor \frac{|I|}{2} \rfloor + \lfloor  \frac{n}{2} \rfloor  + 1)  \\
   &&  \times \sum_{|K|\leq |I|}  \Big( |    \derm (\Lie_{Z^K} A ) |^2 + |    \derm (\Lie_{Z^K} h ) |^2 \Big) \cdot \Big(   \frac{\eps }{(1+t+|q|)^{2-\la} \cdot (1+|q|)^{2+2\gamma}}  \Big)\\
      \notag 
   &&+ C(q_0) \cdot E^{ext}  (   \lfloor \frac{|I|}{2} \rfloor + \lfloor  \frac{n}{2} \rfloor  + 1) \\
   \notag
&&  \times  \sum_{|K|\leq |I|}  \Big( |   \Lie_{Z^K} A  |^2 + |   \Lie_{Z^K} h  |^2 \Big)  \cdot \Big(   \frac{\eps }{(1+t+|q|)^{5-\la} \cdot (1+|q|)^{2+4\gamma}} \Big) \; .
      \notag 
   \eeaa

Thus,
\beaa
   \notag
 &&  \frac{(1+t )^{1+\la}}{\eps} \cdot  | \Lie_{Z^I}   ( g^{\la\mu} \derm_{\la}   \derm_{\mu}    h ) |^2   \\
     \notag
   &\les&   C(q_0) \cdot E^{ext}(   \lfloor \frac{|I|}{2} \rfloor + \lfloor  \frac{n}{2} \rfloor  + 1) \cdot \sum_{|K|\leq |I|}  \Big( |    \derm (\Lie_{Z^K} A ) |^2 + |    \derm (\Lie_{Z^K} h ) |^2 \Big)   \cdot      \frac{\eps  }{(1+t+|q|)^{2-\la} \cdot ( 1+|q| )^{2} }  \\
      \notag 
   &&+   C(q_0) \cdot E^{ext} (   \lfloor \frac{|I|}{2} \rfloor + \lfloor  \frac{n}{2} \rfloor  + 1) \cdot \sum_{|K|\leq |I|}  \Big( |   \Lie_{Z^K} A  |^2 + |   \Lie_{Z^K} h  |^2 \Big)   \cdot        \frac{\eps  }{(1+t+|q|)^{2-\la} \cdot (1+|q|)^2  } \; .
      \notag 
   \eeaa

   Assuming that both $\Lie_{Z^K} A$ and $\Lie_{Z^K} h$ decay fast enough at spatial infinity for all time $t$, i.e. that
\bea
\notag
 \int_{\SSS^{n-1}} \lim_{r \to \infty} \Big( \frac{r^{n-1}}{(1+t+r)^{2-\la} \cdot (1+|q|) } w(q) \cdot   \Big( |   \Lie_{Z^K} A  |^2 + |   \Lie_{Z^K} h  |^2  \Big)  d\si^{n-1} (t ) &=& 0  \; . \\
\eea
Then, for $\ga \neq 0$ and $ 0 < \la \leq \frac{1}{2}$, we have for $0 \leq 2-\la \leq 3 \leq n-1$ (for $n \geq 4$), we get that
  \beaa
\notag
  && \int_{\Si_{t}^{ext}} \frac{1}{(1+t+|q| )^{2-\la}(1+|q|)^2} \cdot \Big( |   \Lie_{Z^K} A  |^2 + |   \Lie_{Z^K} h  |^2  \Big) \cdot   w    \\
     &\leq& c(\ga) \cdot  \int_{\Si_{t}^{ext}}  \frac{1}{(1+t+|q|)^{2-\la}}  \cdot  \Big( |  \derm( \Lie_{Z^K} A ) |^2 + | \derm(    \Lie_{Z^K} h )  |^2  \Big)  \cdot   w    \\
 \eeaa
 As a result,
 \beaa
   \notag
 &&  \int_{\Si_{t}}  \frac{(1+t )^{1+\la}}{\eps} \cdot  | \Lie_{Z^I}   ( g^{\la\mu} \derm_{\la}   \derm_{\mu}    h ) |^2  \cdot   w  \\
     \notag
   &\les&   C(q_0) \cdot E^{ext} (   \lfloor \frac{|I|}{2} \rfloor + \lfloor  \frac{n}{2} \rfloor  + 1)   \\
   \notag
  && \times  \int_{\Si_{t}}   \sum_{|K|\leq |I|}  \Big( |    \derm (\Lie_{Z^K} A ) |^2 + |    \derm (\Lie_{Z^K} h ) |^2 \Big) \cdot      \frac{\eps  }{(1+t+|q|)^{2-\la} \cdot ( 1+|q| )^{2} }  \cdot   w \\
      \notag 
   &&+ C(q_0) \cdot E^{ext} (   \lfloor \frac{|I|}{2} \rfloor + \lfloor  \frac{n}{2} \rfloor  + 1) \cdot   \int_{\Si_{t}}   \sum_{|K|\leq |I|}  \Big( |   \Lie_{Z^K} A  |^2 + |   \Lie_{Z^K} h  |^2 \Big)  \cdot        \frac{\eps  }{(1+t+|q|)^{2-\la} \cdot (1+|q|)^2  }  \cdot   w \\
      \notag 
       &\les&   c(\ga) \cdot  C(q_0) \cdot E^{ext}  (   \lfloor \frac{|I|}{2} \rfloor + \lfloor  \frac{n}{2} \rfloor  + 1)  \cdot      \frac{\eps  }{(1+t)^{2-\la} \ }  \cdot  \int_{\Si_{t}}   \sum_{|K|\leq |I|}  \Big( |    \derm (\Lie_{Z^K} A ) |^2 + |    \derm (\Lie_{Z^K} h ) |^2 \Big)   \cdot   w \\
      \notag 
   &&+  C(q_0) \cdot E^{ext} (   \lfloor \frac{|I|}{2} \rfloor + \lfloor  \frac{n}{2} \rfloor  + 1) \cdot  \int_{\Si_{t}}   \sum_{|K|\leq |I|}  \Big( |   \derm ( \Lie_{Z^K} A ) |^2 + |  \derm ( \Lie_{Z^K} h )  |^2 \Big) \cdot       \frac{\eps  }{(1+t+|q|)^{4-\la}  }  \cdot   w \\
   \notag
    &\les&   c(\ga) \cdot  C(q_0) \cdot E^{ext} (   \lfloor \frac{|I|}{2} \rfloor + \lfloor  \frac{n}{2} \rfloor  + 1)  \cdot      \frac{\eps  }{(1+t)^{2-\la} \ }  \cdot  \int_{\Si_{t}}   \sum_{|K|\leq |I|}  \Big( |    \derm (\Lie_{Z^K} A ) |^2 + |    \derm (\Lie_{Z^K} h ) |^2 \Big)   \cdot   w \; .
   \eeaa
   
   \end{proof}

\subsection{Grönwall type inequality on the exterior energy for $n\geq 4$}\
   
       \begin{lemma}\label{Gronwallinequalityintheexteriorontheenergyfornequal4}
       
        For $ H^{\mu\nu} = g^{\mu\nu}-m^{\mu\nu}$ satisfying 
\bea
| H| \leq  \frac{1}{n} \; ,
\eea
and for $\Lie_{Z^J} A $ and $\Lie_{Z^J} h^1$ decaying sufficiently fast at spatial infinity as in the bootstrap argument, and with the condition that for $\ga > 0 $ and for all $|K| \leq |I| $,
 \bea
\notag
 \int_{\SSS^{n-1}} \lim_{r \to \infty} \Big( \frac{r^{n-1}}{(1+t+r)^{2-\la} \cdot (1+|q|) } \cdot w(q) \cdot   \Big( |   \Lie_{Z^K} A  |^2 + |   \Lie_{Z^K} h  |^2  \Big)  d\si^{n-1} (t ) &=& 0  \; , \\
\eea

 then for $\de= 0$, and for $ 0 < \la \leq \frac{1}{2}$\,,

\bea
\notag
 &&  {(\E_{|I|}^{ext})}^2 (t_2)  \\
 \notag
  &\les&{(\E_{|I|}^{ext})}^2(t_1) +      C(q_0) \cdot c(\ga)  \cdot E^{ext} ( \lfloor \frac{|I|}{2} \rfloor+  \lfloor  \frac{n}{2} \rfloor  +1)   \cdot C(|I|)    \cdot \int_{t_1}^{t_2}   \frac{\eps}{ (1+ t  )^{1+\la}  }  \cdot   {(\E_{|I|}^{ext})}^2 (\tau) \cdot d\tau \; , \\
  \eea
  
where
\beaa
\E_{|I|}^{ext} (\tau) :=  \sum_{|J|\leq |I|} \big( \|w^{1/2}   \derm ( \Lie_{Z^J} h^1   (t,\cdot) )  \|_{L^2 (\Sigma^{ext}_{\tau})} +  \|w^{1/2}   \derm ( \Lie_{Z^J}  A   (t,\cdot) )  \|_{L^2 (\Sigma^{ext}_{\tau})} \big) \, ,
\eeaa
with $w$ defined as in Definition \ref{defoftheweightw}, with $\ga >0$\,.
\end{lemma}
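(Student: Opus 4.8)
The plan is to assemble the energy identity of Lemma~\ref{TheenergyestimateinGronwallformonbothAandhforneq4} with the two commuted‑source estimates of Lemmas~\ref{estimateonthesourcetermsforAsuitableforneq4} and~\ref{Theestimateonthesourcetermsforhsuitableforneq4}. First I would invoke Lemma~\ref{TheenergyestimateinGronwallformonbothAandhforneq4} with the given $|I|$: since $|H|\le\frac1n$ and $\de=0$, it already yields
\[
{(\E_{|I|}^{ext})}^2(t_2)\les {(\E_{|I|}^{ext})}^2(t_1)+C(|I|)\,C(q_0)\,c(\ga)\,E^{ext}\big(\lfloor\tfrac{|I|}{2}\rfloor+\lfloor\tfrac n2\rfloor+1\big)\int_{t_1}^{t_2}\frac{\eps}{(1+t)^{1+\la}}\,{(\E_{|I|}^{ext})}^2(\tau)\,d\tau+\mathcal{S}_A+\mathcal{S}_h,
\]
where $\mathcal{S}_A=C(|I|)\sum_{|K|\le|I|}\int_0^{t_2}\!\big(\int_{\Si_{t}^{ext}}\frac{(1+\tau)^{1+\la}}{\eps}\,|\Lie_{Z^K}(g^{\a\b}\derm_\a\derm_\b A)|^2\,w\big)\,d\tau$ and $\mathcal{S}_h$ is the same quantity with $A$ replaced by $h$. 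Hence the whole matter reduces to showing that $\mathcal{S}_A+\mathcal{S}_h$ is itself bounded by $\int_{t_1}^{t_2}\frac{\eps}{(1+t)^{1+\la}}{(\E_{|I|}^{ext})}^2(\tau)\,d\tau$ times the advertised constants.

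For $\mathcal{S}_A$ I would apply Lemma~\ref{estimateonthesourcetermsforAsuitableforneq4} separately for each multi‑index $K$ with $|K|\le|I|$ (its decay hypotheses, being uniform in $|K|\le|I|$, are exactly those assumed here), and for $\mathcal{S}_h$ apply Lemma~\ref{Theestimateonthesourcetermsforhsuitableforneq4} in the same way. Each application produces, up to the constants $c(\ga)$, $C(q_0)$, $E^{ext}(\lfloor\frac{|I|}{2}\rfloor+\lfloor\frac n2\rfloor+1)$, a bound of the shape $\int_0^{t_2}\frac{\eps}{(1+t)^{2-\la}}\big(\int_{\Si_{t}^{ext}}\sum_{|K'|\le|I|}(|\derm(\Lie_{Z^{K'}}A)|^2+|\derm(\Lie_{Z^{K'}}h)|^2)\,w\big)\,dt$. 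Summing over the finitely many $K$ costs only a factor $C(|I|)$, and by Cauchy--Schwarz on the finite sum of $L^2$‑norms defining $\E_{|I|}^{ext}$ one has $\int_{\Si_{t}^{ext}}\sum_{|K'|\le|I|}(|\derm(\Lie_{Z^{K'}}A)|^2+|\derm(\Lie_{Z^{K'}}h)|^2)\,w\sim{(\E_{|I|}^{ext})}^2(t)$. Finally, for $0<\la\le\frac12$ we have $2-\la\ge\frac32\ge1+\la$, so $(1+t)^{-(2-\la)}\le(1+t)^{-(1+\la)}$; therefore $\mathcal{S}_A+\mathcal{S}_h\les C(q_0)\,c(\ga)\,E^{ext}(\lfloor\frac{|I|}{2}\rfloor+\lfloor\frac n2\rfloor+1)\,C(|I|)\int_{t_1}^{t_2}\frac{\eps}{(1+t)^{1+\la}}{(\E_{|I|}^{ext})}^2(\tau)\,d\tau$, which together with the good term above is precisely the claimed inequality.

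The step I expect to require the most care is bookkeeping rather than analysis: one must check that the spatial‑decay hypotheses in Lemmas~\ref{estimateonthesourcetermsforAsuitableforneq4} and~\ref{Theestimateonthesourcetermsforhsuitableforneq4} — the vanishing of $\int_{\SSS^{n-1}}\lim_{r\to\infty}(\cdots)\,d\si^{n-1}$, which is exactly what makes the exterior Hardy inequality of Corollary~\ref{HardytypeinequalityforintegralstartingatROm} applicable with the radial integral started at the inner null boundary $N_{t_1}^{t_2}$ (using $\overline{C}\subseteq\{q\ge q_0\}$) — are implied by the hypotheses imposed in the present lemma, and that the index ranges line up (the source‑term lemmas are stated for $\Lie_{Z^I}$, but since their hypotheses already hold for all $|K|\le|I|$ they may be invoked term by term). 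The genuinely delicate estimates — the energy identity with the $\widehat{w}^{\prime}$‑weight yielding control of the tangential derivatives, and the sharp decay rates of the commuted source terms — have already been established upstream, so no further work of that kind is needed here.
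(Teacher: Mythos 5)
Your proposal is correct and follows essentially the same route as the paper: the paper's proof likewise injects the source-term bounds of Lemmas \ref{estimateonthesourcetermsforAsuitableforneq4} and \ref{Theestimateonthesourcetermsforhsuitableforneq4} into the Gr\"onwall-form energy estimate of Lemma \ref{TheenergyestimateinGronwallformonbothAandhforneq4}, identifies the resulting spatial integrals with ${(\E_{|I|}^{ext})}^2(\tau)$, and absorbs the $(1+t)^{-(2-\la)}$ factor into $(1+t)^{-(1+\la)}$ using $0<\la\le\tfrac12$. The bookkeeping points you flag (applying the source-term lemmas term by term over $|K|\le|I|$ and matching the decay hypotheses to the Hardy inequality) are handled the same way in the paper.
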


\begin{proof}
Based on Lemmas \ref{estimateonthesourcetermsforAsuitableforneq4} and \ref{Theestimateonthesourcetermsforhsuitableforneq4} and injecting in Lemma \ref{TheenergyestimateinGronwallformonbothAandhforneq4}, we have under the stated assumptions, 
         \beaa
 &&    {(\E_{|I|}^{ext})}^2 (t_2)    \\
 \notag
       &\les & {(\E_{|I|}^{ext})}^2(t_1)     +  C ( |I| ) \cdot C(q_0) \cdot  c(\ga) \cdot  E^{ext} ( \lfloor \frac{|I|}{2} \rfloor+  \lfloor  \frac{n}{2} \rfloor  +1)    \cdot \int_{t_1}^{t_2}   \frac{\eps}{ (1+ t  )^{1+\la}  }  \cdot  {(\E_{|I|}^{ext})}^2 (\tau) \cdot d\tau \\
     \notag
&& + C(|I|) \cdot \int_0^t  \Big( \int_{\Si_{t}^{ext}}  \frac{(1+\tau )^{1+\la}}{\eps}  \cdot \sum_{|K| \leq |I| }  | \Lie_{Z^K}  g^{\a\b} \derm_{\a}   \derm_{\b}    A |^2   \cdot w   \cdot dx^1 \ldots dx^n  \Big) \cdot d\tau   \\
      \notag
&& + C(|I|) \cdot  \int_0^t  \Big( \int_{\Si_{t}^{ext}}  \frac{(1+\tau )^{1+\la}}{\eps}  \cdot \sum_{|K| \leq |I| }  | \Lie_{Z^K}  g^{\a\b} \derm_{\a}   \derm_{\b}    h |^2   \cdot w   \cdot dx^1 \ldots dx^n  \Big) \cdot d\tau \; \\
\notag
    &\les&  \E_{|I|}^{ext} (t_1)     +  C ( |I| ) \cdot C(q_0) \cdot  c(\ga) \cdot  E^{ext} ( \lfloor \frac{|I|}{2} \rfloor+  \lfloor  \frac{n}{2} \rfloor  +1)    \cdot \int_{t_1}^{t_2}   \frac{\eps}{ (1+ t  )^{1+\la}  }  \cdot  {(\E_{|I|}^{ext})}^2 (\tau) \cdot d\tau \\
    \notag
    && + C(|I|) \cdot c(\ga) \cdot C(q_0) \cdot E^{ext} (   \lfloor \frac{|I|}{2} \rfloor + \lfloor  \frac{n}{2} \rfloor  + 1)  \\
    \notag
    && \times   \int_0^t          \frac{\eps  }{(1+t)^{2-\la} \ }  \cdot  \Big(  \int_{\Si_{t}^{ext}}   \sum_{|K|\leq |I|}  \Big( |    \derm (\Lie_{Z^K} A ) |^2 + |    \derm (\Lie_{Z^K} h ) |^2 \Big)   \cdot   w \Big) \cdot dt \; . \\
   \eeaa

Hence, fixing $\delta =0$, and $ 0 < \la \leq \frac{1}{2}$, we obtain the result.

\end{proof}

\subsection{The proof of the theorem for $n\geq 4$}\

\begin{proposition}\label{Thepropositiononclosingthebootstrapargumenttoactuallyboundtheenergy}
Let $n \geq 4$. Consider initial data $\Lie_{Z^J} A $ and $\Lie_{Z^J} h^1$ decaying sufficiently fast at spatial infinity at $t=0$. For every $N \geq 2 \lfloor  \frac{n}{2} \rfloor  + 2$, for every constant $E^{ext} ( N )$ (to bound $\E_{N}^{ext}(t)$ in \eqref{boundonthegrowthoftheenergyinproposition}), there exists a constant $c_0$, that depends on $E^{ext} ( N )$\;, on $N$ and on $w$ (i.e. depends on $\gamma$), such that if
\beaa
\overline{\E}_{N} (0) \leq c_0 \; ,
\eeaa
then for all time $t$, we have
\beaa\label{boundonthegrowthoftheenergyinproposition}
\E_{N}^{ext}(t) \leq E^{ext} ( N ) \; ,
\eeaa
and consequently, in the the Lorenz gauge, the Yang-Mills fields decay to zero and the metric decays to the Minkowski metric in wave coordinates, for the initial value Cauchy problem for the Einstein Yang-Mills equations that we defined in the set-up, which will consequently admit a global solution in time $t$. More precisely, for all  $|J| \leq N -  \lfloor  \frac{n}{2} \rfloor  - 1$, we have in the exterior region $\overline{C}$, which is contained in $q \geq q_0$,

 \beaa
 \notag
 |\derm  ( \Lie_{Z^J}  A ) (t,x)  |     + |\derm  ( \Lie_{Z^J}  h ) (t,x)  |     &\leq&  C(q_0) \cdot   E^{ext} ( N )  \cdot \frac{\eps }{(1+t+|q|)^{\frac{(n-1)}{2}} (1+|q|)^{1+\gamma}} \; ,\\
      \eeaa
and
 \beaa
 \notag
 |\Lie_{Z^J} A (t,x)  | + |\Lie_{Z^J}  h (t,x)  |  &\leq&  C(q_0) \cdot E^{ext} (N)  \cdot \frac{\eps }{(1+t+|q|)^{\frac{(n-1)}{2}} (1+|q|)^{\gamma}} \; .
      \eeaa

\end{proposition}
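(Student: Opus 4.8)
The plan is to run a standard continuity (bootstrap) argument in the exterior region $\overline{C}$, using the weighted energy estimate proved in Lemma \ref{Gronwallinequalityintheexteriorontheenergyfornequal4} as the engine. First I would set up the bootstrap: fix $N \geq 2\lfloor \frac{n}{2}\rfloor +2$ and a target constant $E^{ext}(N)$, and assume on a maximal time interval the bootstrap assumption \eqref{bootstrapexter} with $\delta = 0$ and $\eps = 1$, i.e. $\E_N^{ext}(t) \leq E^{ext}(N)$. Under this assumption the a priori estimates of Lemmas \ref{aprioriestimatesongradientoftheLiederivativesofthefields} and \ref{aprioriestimatefrombootstraponzerothderivativeofAandh1} hold, and in particular the bound $|H| < \frac{1}{n}$ is satisfied in $\overline{C}$ for $\overline{\E}_N(0)$ sufficiently small (this smallness is what determines $c_0$, via tracking how the initial energy norm \eqref{definitionoftheenergynormforinitialdata} controls $\E_N^{ext}(0)$ and the pointwise size of $H$ through Klainerman--Sobolev). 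The decay of $H$ needed for $|H|<\tfrac1n$ comes from \eqref{aprioriestimateonBigHwithLiederivativeLintheexteriorregion}, which already contains the factor $C(q_0)\cdot E^{ext}(\lfloor n/2\rfloor+1)$; choosing $c_0$ small makes this less than $1/n$.

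The core step is to feed this into Lemma \ref{Gronwallinequalityintheexteriorontheenergyfornequal4}. That lemma, with $\delta = 0$ and $0 < \lambda \leq \tfrac12$, gives
\bea
\notag
{(\E_{N}^{ext})}^2 (t) \les {(\E_{N}^{ext})}^2(0) + C(q_0)\cdot c(\ga)\cdot E^{ext}(\lfloor \tfrac{N}{2}\rfloor + \lfloor \tfrac{n}{2}\rfloor +1)\cdot C(N) \cdot \int_{0}^{t} \frac{\eps}{(1+\tau)^{1+\la}}\cdot {(\E_{N}^{ext})}^2(\tau)\, d\tau \; .
\eea
Since $\int_0^\infty (1+\tau)^{-1-\la}\,d\tau = \lambda^{-1} < \infty$, Grönwall's inequality yields ${(\E_N^{ext})}^2(t) \leq {(\E_N^{ext})}^2(0)\cdot \exp\big(C(q_0)\,c(\ga)\,C(N)\,E^{ext}(\lfloor N/2\rfloor+\lfloor n/2\rfloor+1)\,\lambda^{-1}\big)$ for all $t$. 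Because $N \geq 2\lfloor n/2\rfloor + 2$, we have $\lfloor N/2\rfloor + \lfloor n/2\rfloor + 1 \leq N$, so the constant appearing in the exponent is controlled by $E^{ext}(N)$, a fixed constant. Choosing $c_0$ (hence ${(\E_N^{ext})}^2(0) \les \overline{\E}_N(0)^2 \leq c_0^2$) small enough that $c_0 \cdot \exp(\cdots) \leq \tfrac12 E^{ext}(N)$ improves the bootstrap constant from $E^{ext}(N)$ to $\tfrac12 E^{ext}(N)$, which by the standard continuity argument closes the bootstrap and gives $\E_N^{ext}(t) \leq E^{ext}(N)$ for all $t$.

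Finally, once the energy bound is established for all $t$, the pointwise decay estimates follow directly from the weighted Klainerman--Sobolev inequality in the exterior (stated in the subsection on the weighted Klainerman--Sobolev inequality), applied to $\derm(\Lie_{Z^J}A)$ and $\derm(\Lie_{Z^J}h)$ for $|J| \leq N - \lfloor n/2\rfloor - 1$, together with the Hardy-type inequality of Corollary \ref{HardytypeinequalityforintegralstartingatROm} to pass from the gradient to the field itself for the second family of estimates; the restriction $|J| \leq N - \lfloor n/2\rfloor - 1$ is exactly what makes $|J| + \lfloor n/2\rfloor + 1 \leq N$ so that the energy of the needed order is bounded. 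The main obstacle I anticipate is the bookkeeping of constants: one must verify carefully that every constant entering the Grönwall exponent depends only on quantities ($q_0$, $\gamma$, $n$, $N$, and the \emph{fixed} target $E^{ext}(N)$) that do \emph{not} grow with the solution, so that the smallness required of $c_0$ can indeed be chosen a priori; in particular one must confirm that $\lfloor N/2\rfloor + \lfloor n/2\rfloor + 1 \leq N$ under the hypothesis $N \geq 2\lfloor n/2\rfloor + 2$, and that the $C(q_0)$ factors — which are harmless since $q_0$ is fixed once $\overline{C}$ is chosen — do not secretly depend on $t$. A secondary technical point is checking that the spatial-infinity decay hypotheses of Lemmas \ref{estimatethespace-timeintegralofthecommutatortermneededfortheenergyestimate}, \ref{estimateonthesourcetermsforAsuitableforneq4}, \ref{Theestimateonthesourcetermsforhsuitableforneq4} and \ref{Gronwallinequalityintheexteriorontheenergyfornequal4} are propagated from the assumed fast decay of the initial data $\Lie_{Z^J}\overline{A}$, $\Lie_{Z^J}\overline{h}^1$ at $t=0$, which is where the hypothesis on $\overline{\E}_N$ in \eqref{definitionoftheenergynormforinitialdata} with its $(1+r)^{1/2+\gamma+|I|}$ weights is used.
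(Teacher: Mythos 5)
Your proposal is correct and follows essentially the same route as the paper's own proof: a bootstrap with $\delta=0$, $\eps=1$, the smallness of $|H|$ enforced through the low-order a priori estimate, the Grönwall inequality of Lemma \ref{Gronwallinequalityintheexteriorontheenergyfornequal4} with the integrable factor $(1+\tau)^{-1-\la}$, closure of the continuity argument by improving the constant to $\tfrac12 E^{ext}(N)$ under the condition $N \geq \lfloor \tfrac{N}{2}\rfloor + \lfloor \tfrac{n}{2}\rfloor + 1$, and the final pointwise decay read off from the exterior Klainerman--Sobolev/a priori estimates (Lemmas \ref{aprioriestimatesongradientoftheLiederivativesofthefields} and \ref{aprioriestimatefrombootstraponzerothderivativeofAandh1}). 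The bookkeeping caveats you flag (constants independent of $t$, propagation of spatial decay, and the relation between the smallness of the low-order bootstrap constant and $|H|<\tfrac1n$) are handled in the paper at the same level of detail you propose.
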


\begin{proof}
We start with the bootstrap assumption on $\E_{ (  \lfloor \frac{N}{2} \rfloor + \lfloor  \frac{n}{2} \rfloor  + 1)}$. We have then, thanks to \eqref{aprioriestimateonBigHwithLiederivativeL}, for $n\geq 4$ and for $\de = 0$\;, that
       \beaa
 \notag
|   H (t,x)  | &\les& \begin{cases}  c (\gamma)  \cdot  \frac{ \E_{ ( \lfloor  \frac{n}{2} \rfloor  +1)}}{ (1+ t + | q | )^{2 }  (1+| q |   )^{\ga}}  ,\quad\text{when }\quad q>0,\\
\notag
    \frac{ \E_{ ( \lfloor  \frac{n}{2} \rfloor  +1)}}{ (1+ t + | q | )^{2 }  } (1+| q |   )^{\frac{1}{2} }  , \,\quad\text{when }\quad q<0 . \end{cases} \\ 
     &\les&  c (\gamma)  \cdot   \E_{ ( \lfloor  \frac{n}{2} \rfloor  +1)}\\
               &\les&  c (\gamma)  \cdot   E (  \lfloor  \frac{n}{2} \rfloor  +1)  \\
                    && \text{(where we used that we chose $\de = 0$ and $\eps = 1$\;,}\\
                    &&\text{see \eqref{delataqualtozero} and \eqref{epsequaltoone})}.
    \eeaa

By choosing $E (  \lfloor  \frac{n}{2} \rfloor  +1)$ small enough, depending on $\ga$ and on $n$\,, we have
\bea
 c (\gamma)  \cdot  E (  \lfloor  \frac{n}{2} \rfloor  +1) < \frac{1}{n} \; .
\eea
We take initial data decaying sufficiently fast at spatial infinity, and since the fields satisfy a wave equation, we claim that this spatial decay will propagate in time under the bootstrap assumption, and thus, they will be satisfied for all time $t$, in a way that we could use Lemma \ref{Gronwallinequalityintheexteriorontheenergyfornequal4}, where we fix  $ 0 < \la \leq \frac{1}{2}$ arbitrary. Consequently, we get
\bea
\notag
 && ( \E_{N}^{ext} )^2 (t)  \\
 \notag
  &\leq& C \cdot ( \E_{N}^{ext} )^2 (0) +    c(\ga) \cdot E (   \lfloor \frac{|I|}{2} \rfloor + \lfloor  \frac{n}{2} \rfloor  + 1)  \cdot  C(N) \cdot  \int_0^t          \frac{\eps  }{(1+\tau)^{1+\la} \ }  \cdot  \E_{N}^2 (\tau)  \cdot d\tau \;.
\eea
Now, using Grönwall lemma, we get
\bea
\notag
 &&  ( \E_{N}^{ext} )^2 (t)  \leq C\cdot \E_{N}^2 (0) \cdot \exp \Big(  \int_0^t    c(\ga) \cdot E (   \lfloor \frac{N}{2} \rfloor + \lfloor  \frac{n}{2} \rfloor  + 1)  \cdot  C(N) \cdot   \eps \cdot       \frac{1 }{(1+\tau)^{1+\la} \ }  \cdot d\tau     \Big) \\
\notag
 & \leq& C \cdot \E_{N}^2 (0) \cdot \exp \Big(     c(\ga) \cdot E (   \lfloor \frac{N}{2} \rfloor + \lfloor  \frac{n}{2} \rfloor  + 1)  \cdot  C(N) \cdot   \eps  \cdot \Big[  \frac{-1 }{\la (1+\tau)^{\la} }   \Big]^{\infty}_{0}      \Big) \\
  & \leq& C \cdot \E_{N}^2 (0) \cdot \exp \Big(    c(\ga) \cdot E (   \lfloor \frac{N}{2} \rfloor + \lfloor  \frac{n}{2} \rfloor  + 1)  \cdot  C(N) \cdot   \eps  \cdot \frac{1 }{\la }     \Big) \; ,
\eea
which also leads to, using that we chose $\eps \leq 1$ and that $E(k) \leq 1$, that
\beaa
  \E_{N}^{ext}  (t)  & \leq& C \cdot \E_{|I|} (0) \cdot \exp \Big(    c(\ga) \cdot E (   \lfloor \frac{N}{2} \rfloor + \lfloor  \frac{n}{2} \rfloor  + 1)  \cdot  C(N) \cdot   \eps  \cdot \frac{1 }{\la }     \Big) \\
  & \leq& C \cdot \E_{N} (0) \cdot \exp \Big(    c(\ga)   \cdot  C(N)   \cdot \frac{1 }{\la }     \Big) \; .
\eeaa
Thus, choosing an initial data such that
\bea
 \overline{\E}_{N} (0) \leq  \frac{1}{2\cdot C  \cdot \exp \Big(    c(\ga)  \cdot  C(N)   \cdot \frac{1 }{\la }     \Big)} \cdot  E (   \lfloor \frac{N}{2} \rfloor + \lfloor  \frac{n}{2} \rfloor  + 1) \; ,
\eea
implies that
\bea
 \E_{N} (0) \leq  \frac{1}{2\cdot C  \cdot \exp \Big(    c(\ga)  \cdot  C(N)   \cdot \frac{1 }{\la }     \Big)} \cdot  E (   \lfloor \frac{N}{2} \rfloor + \lfloor  \frac{n}{2} \rfloor  + 1) \; ,
\eea
This leads to
\beaa
\notag
 &&  \E_{N}^{ext}  (t)  \leq \frac{1}{2} \cdot E (   \lfloor \frac{N}{2} \rfloor + \lfloor  \frac{n}{2} \rfloor  + 1)  \; .
\eeaa
However, for $N \geq  \lfloor \frac{N}{2} \rfloor + \lfloor  \frac{n}{2} \rfloor  + 1$, which means for $\frac{N}{2} \geq  \lfloor  \frac{n}{2} \rfloor  + 1$, we have
\beaa
  \E_{   \lfloor \frac{N}{2} \rfloor + \lfloor  \frac{n}{2} \rfloor  + 1}^{ext}   (t)  \leq   \E_{|I|} (0)  \; .
 \eeaa
 Thus,
 \bea
  &&  \E_{   \lfloor \frac{N}{2} \rfloor + \lfloor  \frac{n}{2} \rfloor  + 1}^{ext}   (t)  \leq \frac{1}{2} \cdot E (   \lfloor \frac{N}{2} \rfloor + \lfloor  \frac{n}{2} \rfloor  + 1)    \; .
  \eea
This shows that the estimate $  \E_{   \lfloor \frac{N}{2} \rfloor + \lfloor  \frac{n}{2} \rfloor  + 1}  (t)  \leq E (   \lfloor \frac{N}{2} \rfloor + \lfloor  \frac{n}{2} \rfloor  + 1)    $\, is in fact a true estimate and therefore, we can close the bootstrap argument for  $  \E_{   \lfloor \frac{N}{2} \rfloor + \lfloor  \frac{n}{2} \rfloor  + 1}  (t)$\,, with $\eps=1$ and $\de=0$. For this, we have used the condition that
\beaa
N \geq  \lfloor \frac{N}{2} \rfloor + \lfloor  \frac{n}{2} \rfloor  + 1\;,
\eeaa
 which imposes that $N \geq 2 \lfloor  \frac{n}{2} \rfloor  + 2$, and we also got that
\bea
 &&  \E_{N}^{ext}  (t)  \leq \frac{1}{2} \cdot E (   \lfloor \frac{N}{2} \rfloor + \lfloor  \frac{n}{2} \rfloor  + 1) \; .
\eea
This in turn gives, using Lemmas \ref{aprioriestimatefrombootstraponzerothderivativeofAandh1} and \ref{aprioriestimatesongradientoftheLiederivativesofthefields}, the stated decay estimates on the fields.

\end{proof}

\newpage

\end{document}